\documentclass[opre,nonblindrev]{informs4}

\OneAndAHalfSpacedXI 
\usepackage{natbib}
 \bibpunct[, ]{(}{)}{,}{a}{}{,}%
 \def\bibfont{\small}%
 \def\bibsep{\smallskipamount}%
\TheoremsNumberedThrough    
\ECRepeatTheorems
\EquationsNumberedThrough    
\MANUSCRIPTNO{}

\usepackage{algorithm} 
\usepackage{algorithmic} 
\usepackage{tikz} 
\usetikzlibrary{calc, arrows.meta}
\definecolor{darkblue}{RGB}{31,78,121}
\definecolor{medblue}{RGB}{68,114,196}
\definecolor{darkred}{RGB}{192,0,0}
\definecolor{darkgreen}{RGB}{0,128,0}
\definecolor{amber}{RGB}{255,192,0}
\definecolor{conefill1}{RGB}{255,230,200}
\definecolor{conefill2}{RGB}{220,235,255}
\definecolor{projcolor}{RGB}{148,103,189}
\usepackage{enumerate}
\usepackage{rotating,lineno}
\usepackage{flafter}
\usepackage[T1]{fontenc}
\usepackage[utf8]{inputenc}
\usepackage{babel}
\usepackage[font=small,labelfont=bf]{caption}
\usepackage{graphicx}
\usepackage{capt-of}% or \usepackage{caption}
\usepackage[mathscr]{euscript}
\usepackage{pgfplots}
\usepackage{multirow}
\usepackage{array, makecell}
\usepackage{booktabs}
\usepackage{hyperref}
\hypersetup{
    colorlinks=true,
    linkcolor=blue,
    filecolor=magenta,      
    urlcolor=blue,
    citecolor={blue},
}
\pgfplotsset{compat=1.12}

\DeclareSymbolFont{rsfs}{U}{rsfs}{m}{n}
\DeclareSymbolFontAlphabet{\mathscrsfs}{rsfs}

\definecolor{lightgray}{RGB}{229, 231, 233}

\usepackage{amsmath}

\usepackage{nicefrac}
\usepackage[shortlabels]{enumitem}
\usepackage{subcaption}

\newcommand{\GIL}{\text{IL$^{\emptyset}$}}
\newcommand{\MGIL}{\text{GIL}}

\newcommand{\desirable}{relevant}

\newcommand{\objective}{trivial}

\newcommand{\pref}{preferred}

\tikzstyle{very loosely dotted}=          [dash pattern=on \pgflinewidth off 6pt]
\tikzstyle{very loosely dashdotted}=      [dash pattern=on 3pt off 8pt on \the\pgflinewidth off 8pt]
\tikzstyle{costumdashed}=                  [dash pattern=on 10pt off 3pt]

\begin{document}
%%%%%%%%%%%%%%%%

\RUNTITLE{Inverse Learning}

%\TITLE{Inverse Learning: A Data-driven Inverse Optimization Framework for Learning Optimal Solutions}
%\TITLE{Inverse Learning for Solving Partially Known Optimization Models}
%\TITLE{Inverse Learning Optimization for Solving Partially Known Models}
%\TITLE{Inverse Learning: Inverse Optimization to Solve Partially Known Models}
%\TITLE{nutritional Behavior Adjustments Using Inverse Optimization}
%\TITLE{A Flexible Inverse Optimization Approach for Enhanced Adherence to Nutritional Guidelines in Patients Under Dietary Interventions}
%\TITLE{Decision Improvement and Correction Using Expert Knowledge via a Flexible Inverse Optimization Approach: The Case of Nutritional Adherence}
\TITLE{From Non-Identifiability to Goal-Integrated Decision-Making in Parametric Inverse Optimization}
% ALTERNATIVE TITLE: Enhancing Decision-Making in Health Management through Inverse Optimization: A Focus on Patient Adherence to Dietary Guidelines
\ARTICLEAUTHORS{%

\AUTHOR{Farzin Ahmadi, Fardin Ganjkhanloo, Kimia Ghobadi}
\AFF{Department of Civil and Systems Engineering, The Center for Systems Science and Engineering, The Malone Center for Engineering in Healthcare, Johns Hopkins University, Baltimore, MD 21218, \EMAIL{fahmadi1@jhu.edu}, \EMAIL{fganjkh1@jhu.edu}, \EMAIL{kimia@jhu.edu}}
}

\RUNAUTHOR{Authors' Names Blinded}
\ABSTRACT{

Inverse optimization seeks to recover unknown objective parameters from observed decisions, yet fundamental questions about when recovery is possible have received limited formal treatment. This paper develops a comprehensive theoretical framework for inverse optimization in parametric convex models. We first establish that non-identifiability is the generic case: even with normalization and multiple observations, the parameter set compatible with data is generically multi-dimensional, and regularization does not resolve this. We derive necessary and sufficient conditions for identifiability.
Motivated by these negative results, we introduce the Inverse Learning (IL) framework, which shifts the inferential target from the unknown parameter to the latent optimal solution, achieving a complexity reduction that is independent of the number of observations. $\mathcal{IL}$ explicitly characterizes the full set of compatible parameters rather than returning an arbitrary element. To address the tension between observational fidelity and constraint adherence, we formalize the Observation-Constraint Tradeoff and develop Goal-Integrated Inverse Learning models that enable structured navigation of this spectrum with guaranteed monotonicity. Numerical experiments demonstrate superior solution accuracy, higher parameter recovery rates, and significant computational speedups. We apply the framework to personalized dietary recommendations using NHANES data, proof-of-concept demonstrating improved glycemic control in a prospective feasibility study.
}
%Inverse Optimization,Utility Function,Expert Interventions,Decision Recommendations,Patient Adherence
\KEYWORDS{inverse optimization, parametric convex optimization, data-driven decision-making} 
%\SUBJECTCLASS{Decision analysis (Inference, Theory), Health care, Mathematics (Sets, Polyhedra), Programming (Linear, Theory), Utility/preference (Theory)}
%\AREAOFREVIEW{Optimization}
%\HISTORY{}
\maketitle

\footnotetext[1]{F. Ahmadi is currently at the Deaprtment of Health Sciences, 
Towson University, Towson, MD 21252.}
\footnotetext[1]{F. Ganjkhanloo is currently at the Center for Health Systems and Policy Modeling, 
Johns Hopkins University, Washington DC 20001.}
 \clearpage
%%%%%%%%%%%%%%%%%%%%%%%%%%%%%%%%%%%%%%%%%%%%%%%%%%%%%%%%%%%%%%%%%%%%%%

\section{Introduction}
\label{sec:introduction}

A fundamental question in operations research is how to infer the objectives driving observed decisions. When a decision-maker repeatedly solves an optimization problem whose constraints are known but whose objective function is not, the \emph{inverse optimization} problem seeks to recover the unknown parameters from observed solutions. Since the seminal formulation by \citet{Ahuja01}, inverse optimization has grown into a mature methodological area with applications spanning healthcare \citep{chan2014generalized, ajayi2022objective}, transportation \citep{chow2012inverse}, energy systems \citep{saez2017short, ratliff2014social}, and finance \citep{bertsimas2012inverse, utz2014tri}. Recent surveys \citep{chan2023inverse, heuberger2004inverse} and methodological advances in statistical consistency \citep{aswani2018inverse}, online learning \citep{dong2018generalized, besbes2023contextual}, loss function design, and uncertainty quantification \citep{lin2024conformal} have substantially expanded the field's scope and rigor.

Despite this progress, two challenges have impeded the practical deployment of inverse optimization in data-rich settings. The first is \emph{non-identifiability}: when can the unknown objective parameters be uniquely recovered from observed decisions, and what happens when they cannot? The issue has been acknowledged in specific contexts, for instance, \citet{aswani2018inverse} noted that constraints can render parameters unidentifiable, and \citet{chan2019inverse} developed closed-form solutions under special structures. However, a systematic characterization of when and why identifiability fails in general convex models has been lacking. The second challenge is \emph{computational scalability}: classical KKT-based inverse formulations have variables and constraints scaling linearly with the number of observations. For modern applications with thousands of observations, this scaling renders standard formulations impractical.

This paper addresses both challenges within a unified framework for parametric convex optimization. We adopt a structured objective with known convex basis functions and an unknown parameter vector, encompassing linear programs, convex quadratic programs, and broader convex models as special cases. Our contributions are organized around three interconnected themes.

\subsection{Contributions}

\textbf{1. Non-identifiability is the generic case.} We provide, to our knowledge, the first comprehensive non-identifiability analysis for inverse optimization in parametric convex models. For linear programs, we characterize identifiability through the geometry of normal cone intersections: parameter uniqueness requires the common normal cone to collapse to a single ray (Theorem~\ref{thm:nonidentifiability-lp}), a condition that is generically violated whenever observations share two or more non-collinear active constraint normals (Proposition~\ref{prop:multiple-rays-conditions}). For general convex models, we identify two distinct sources of ambiguity, namely, normal cone dimensionality and basis function degeneracy, and show that the parameter feasibility set $\mathcal{S}$ is a polyhedral cone whose dimension depends on both the gradient structure and the constraint geometry (Theorem~\ref{thm:nonidentifiability-convex}). Critically, regularization does not resolve non-identifiability; it can only enlarge the feasible parameter set (Corollary~\ref{thm:nonidentifiability-rico-lp}). We then establish sharp sufficient conditions for identifiability based on a novel \emph{orthogonal persistent excitation} condition: the excitation matrix must be positive definite (Theorem~\ref{thm:identifiability}). This condition is both necessary and sufficient (Proposition~\ref{prop:necessity-orth-excite}), precisely separating the identifiable and non-identifiable components of the data's information content.

\textbf{2. Inverse Learning: scalability and solution identifiability.} Motivated by the prevalence of non-identifiability and the computational burden of classical formulations, we introduce \emph{Inverse Learning} (IL). Rather than imputing separate optimal solutions for each observation (as in classical inverse optimization), $\mathcal{IL}$ learns a \emph{single representative optimal solution} that minimizes aggregate distance to observations while satisfying the KKT conditions for some parameter. This reformulation achieves two key properties. First, problem complexity drops to linear order of variables and constraints of the forward problem, \emph{independent of the number of observations} $K$ (Theorem~\ref{thm:IL-complexity}). For squared Euclidean loss, the data enter only through the sample centroid (Proposition~\ref{prop:aggregation}), enabling streaming updates, parallelization, and memory-efficient implementations. Second, $\mathcal{IL}$ achieves \emph{solution identifiability} (uniqueness of the recovered optimal solution) -under conditions that are strictly different from, and in many practical settings less restrictive than, those required for parameter identifiability (Theorem~\ref{thm:IL-solution-identifiability}). $\mathcal{IL}$ does not require one-dimensional normal cones or excitation conditions; it requires only local convexity of the optimality set near the true solution. Rather than returning an arbitrary feasible parameter, $\mathcal{IL}$ explicitly characterizes the full set of compatible parameters (Theorem~\ref{thm:IL-param-set}), making non-uniqueness transparent.

\textbf{3. The Observation-Constraint Tradeoff and Goal-Integrated models.} 
%In many applications, the solution that best fits observed data may poorly satisfy domain-specific constraints or expert-defined goals. For instance, a patient's habitual diet may systematically violate nutritional guidelines; a transportation operator's routing may not respect environmental targets. We formalize this tension as the \emph{Observation-Constraint Tradeoff} (Definition~\ref{def:tradeoff}) and develop two complementary \emph{Goal-Integrated Inverse Learning} models to navigate it. \emph{GIL} controls the exact number $r$ of binding ``relevant'' constraints through a cardinality parameter, enabling systematic exploration of the tradeoff spectrum. \emph{Modified $\mathcal{GIL}$ (MGIL)} provides structured sequential navigation by iteratively adding constraints while inheriting all previously active ones, producing a monotonically increasing distance sequence (Theorem~\ref{thm:MGIL-monotone}) on nested faces (Theorem~\ref{thm:MGIL-face}). Both models maintain the $K$-independence of $\mathcal{IL}$, enforce complete KKT conditions guaranteeing forward optimality, and characterize the full parameter set at each solution. The constraint hierarchy distinguishes between relevant constraints (expert knowledge), preferred constraints (priority subset), and trivial constraints (structural feasibility), enabling data-driven constraint selection while preserving mathematical validity.
We formalize the tension between fitting observed behavior and enforcing domain constraints as the \emph{Observation-Constraint Tradeoff} (Definition~\ref{def:tradeoff}) and develop \emph{Goal-Integrated Inverse Learning} models to navigate it. $\mathcal{GIL}$ controls the number $r$ of binding relevant constraints via a cardinality parameter, enabling systematic exploration of the tradeoff spectrum. $\mathcal{MGIL}$ provides structured sequential navigation by iteratively adding constraints while inheriting previously active ones, yielding monotone distance sequences (Theorem~\ref{thm:MGIL-monotone}) over nested faces (Theorem~\ref{thm:MGIL-face}). Both models preserve the $K$-independence of $\mathcal{IL}$, enforce complete KKT conditions guaranteeing forward optimality, and characterize the compatible parameter set at each solution. A constraint hierarchy—relevant (expert knowledge), preferred (priority subset), and trivial (structural feasibility)—supports data-driven constraint selection while maintaining mathematical validity.

\subsection{Relevant Literature}

The inverse optimization literature has evolved along several interrelated threads. The classical strand, initiated by \citet{Ahuja01} and developed by \citet{heuberger2004inverse, iyengar2005inverse, chan2014generalized}, focuses on recovering parameters under the assumption that observations are exactly optimal. The data-driven strand relaxes this assumption, accommodating noisy or suboptimal observations through loss-function-based formulations \citep{keshavarz2011imputing, bertsimas2015data, aswani2018inverse, esfahani2018data}. Recent work has further extended these foundations: \citet{besbes2023contextual} develop offline and online contextual inverse  optimization with logarithmic regret bounds, introducing the circumcenter  policy and ellipsoidal cone machinery for minimax regret;  \citet{zattoni2025learning} propose the complementary incenter cost vector with tractable convex reformulations, introduce the augmented suboptimality loss (ASL) as a relaxation for inconsistent data, and develop a stochastic 
approximate mirror descent algorithm with provable convergence rates; \citet{lin2024conformal} propose conformal inverse optimization, learning uncertainty sets rather than point estimates; and \citet{shahmoradi2021quantile} develop quantile-based approaches to improve stability.

Our work intersects with but is distinct from these contributions in several respects. First, while identifiability has been discussed as a technical assumption supporting consistency results \citep{aswani2018inverse} or as a geometric property of specific models \citep{chan2019inverse}, we provide a systematic analysis establishing that non-identifiability is structurally inherent in inverse convex optimization. Our orthogonal excitation condition offers a precise geometric characterization that unifies and extends scattered conditions in the literature. Second, whereas classical and data-driven approaches both scale linearly in $K$, our $\mathcal{IL}$ framework eliminates this dependence entirely through a reformulation that targets the latent solution rather than the parameter---a fundamentally different inferential objective. Third, the Goal-Integrated framework addresses a gap largely unaddressed in the inverse optimization literature: the integration of domain expertise through constraint hierarchies and controlled constraint activation, providing practitioners with interpretable mechanisms to balance data fidelity against normative goals.

The observation-constraint tradeoff that we formalize also connects to the broader literature on decision-aware learning \citep{elmachtoub2022smart} and prescriptive analytics, where the goal is not merely to estimate parameters but to generate actionable decisions. Our framework provides a structured path from descriptive (what do people do?) through normative (what should they do?) to prescriptive (what achievable changes would improve outcomes?), a progression of particular value in behavioral applications.

From an application perspective, personalized health interventions and dietary recommendations in particular represent a natural domain for inverse optimization. Observed dietary behaviors reflect latent preferences that may conflict with clinical guidelines, and the gap between population-level recommendations and individual adherence remains a significant challenge \citep{sacks2001effects, liese2009adherence}. Our framework addresses this gap by generating recommendations that respect habitual patterns while progressively incorporating nutritional constraints, with each step quantifying the marginal behavioral cost of additional guideline adherence.

%\subsection{Outline}

The remainder of this paper is organized as follows. Section~\ref{sec:inverse-convex} formalizes the inverse optimization problem for parametric convex models, establishes the non-identifiability results, and derives identifiability conditions. Section~\ref{sec:IL} develops the Inverse Learning framework, including complexity reduction, statistical guarantees, and solution identifiability. Section~\ref{sec:GIL} introduces the Goal-Integrated models (GIL and $\mathcal{MGIL}$) and the Observation-Constraint Tradeoff. Section~\ref{Section:NumericalEx} presents numerical experiments comparing the $\mathcal{IL}$ framework against classical inverse optimization. Section~\ref{Section:Application} applies the methodology to personalized dietary recommendations using NHANES data and a prospective feasibility study. Section~\ref{sec:webpage} in the electronic companion describes the interactive decision-support tools developed to operationalize the framework.

\section{Inverse Optimization for Parametric Convex Models}
\label{sec:inverse-convex}

\subsection{Preliminaries}
We adopt a structured parametric objective of the form
\begin{equation}\label{eq:parametric-obj}
f(x,\theta) = \sum_{j=1}^p \theta_j \phi_j(x),
\end{equation}
where $\theta = (\theta_1,\ldots,\theta_p)^\top \in \mathbb{R}^p$ are unknown parameters and $\phi_j:\mathbb{R}^n \to \mathbb{R}$ are known, continuously differentiable convex basis functions. To ensure that $\mathcal{FO}(\theta,\Omega)$ is a convex optimization problem for every admissible parameter vector, we restrict attention to nonnegative combinations of convex basis functions, i.e., $\theta \in \mathbb{R}^p_+$ (see Assumption~\ref{assump:normalization}). Under this restriction, $f(\cdot,\theta)$ is convex on $\Omega$.

This parametric form covers several standard models:
(i)~for linear programs, $\phi_j(x)=x_j$ yields $f(x,\theta)=\theta^\top x$; and
(ii)~for convex quadratic programs, $\phi_j(x)=x^\top Q_j x$ with $Q_j \succeq 0$ (and optionally affine terms $\phi_j(x)=x_i$) yields $f(x,\theta)=x^\top(\sum_j \theta_j Q_j)x + c(\theta)^\top x$ with a positive semidefinite quadratic term. We base our methodology on the following assumptions.

\begin{assumption}[Basis Function Properties]\label{assump:basis}
We assume:
(i) $\{\phi_j\}_{j=1}^p$ are linearly independent on $\Omega$;
(ii) some $\phi_{j_0}$ is non-constant on $\Omega$;
(iii) each $\phi_j$ is convex and continuously differentiable on $\Omega$; and
(iv) the gradient matrix $[\nabla \phi_1(x)\ \cdots\ \nabla \phi_p(x)]$ has rank at least $\min(p,n)$ for generic $x \in \Omega$.
\end{assumption}

\begin{assumption}[Constraint Qualification and Smoothness]\label{assump:slater}
The constraint functions $g_i : \mathbb{R}^n \to \mathbb{R}$ are convex and continuously differentiable. The feasible set $\Omega = \{x \in \mathbb{R}^n : g_i(x) \leq 0,\ i = 1,\ldots,m\}$ satisfies Slater's condition: there exists $\hat{x} \in \mathbb{R}^n$ such that $g_i(\hat{x})<0$ for all $i$.
\end{assumption}

\begin{assumption}[Parameter Normalization]\label{assump:normalization}
To avoid scale invariance and preserve convexity of $f(\cdot,\theta)$, we restrict parameters to
\begin{equation}
\Theta = \bigl\{ \theta \in \mathbb{R}^p_+ : \|\theta\|_q = 1,\; \theta_{j_0} \ge \alpha > 0 \bigr\},
\end{equation}
where $q \in [1,\infty]$ and $\alpha>0$ ensures the non-constant basis function $\phi_{j_0}$ receives nonzero weight.
\end{assumption}

Given noisy observations $\mathcal{X}=\{x^k: k=1,\ldots,K\}$, the inverse convex optimization problem seeks parameters $\theta$ such that each observation is close to an optimal solution of $\mathcal{FO}(\theta,\Omega)$. Under Assumption~\ref{assump:slater}, the following KKT-based formulation enforces optimality:
\begin{align}
\mathcal{ICO}(\mathcal{X},\Omega):\quad
\min_{\theta,\{\lambda^k\},\{z^k\}} \ & \sum_{k=1}^K \|x^k - z^k\|_2^2 \label{eq:ICO}\\
\text{s.t.}\quad
& z^k \in \Omega,\quad \forall k, \nonumber\\
& \sum_{j=1}^p \theta_j \nabla \phi_j(z^k) + \sum_{i=1}^m \lambda_i^k \nabla g_i(z^k) = 0,\quad \forall k, \nonumber\\
& \lambda_i^k g_i(z^k) = 0,\quad \forall i,k, \nonumber\\
& \lambda^k \in \mathbb{R}_+^m,\ \forall k,\quad \theta \in \Theta. \nonumber
\end{align}
This enforces the Karush--Kuhn--Tucker (KKT) conditions of the forward problem. As is well known from the literature on inverse linear and convex optimization (see, e.g., \citealt{Ahuja01, chan2014generalized, aswani2018inverse}), this problem is generally nonconvex and NP-hard due to complementarity constraints. Under Slater's condition (Assumption~\ref{assump:slater}), the KKT conditions are both necessary and sufficient for optimality of $z^k$ in the forward problem, and strong duality holds. When $\phi_j(x)=x_j$ and $g_i(x)=b_i-a_i^\top x$, the problem reduces to the classical inverse linear optimization model:
\begin{align}
\mathcal{ILO}(\mathcal{X},\Omega): \quad 
\min_{\theta,\{\lambda^k\},\{z^k\}} \;& \sum_{k=1}^K \|x^k-z^k\|_2^2 \label{eq:ILO}\\
\text{s.t.}\quad 
& Az^k \ge b,\; \forall k, \nonumber\\
& \theta = A^\top\lambda^k,\; \forall k, \nonumber\\
& \theta^\top z^k = b^\top\lambda^k,\; \forall k, \nonumber\\
& \lambda^k \in \mathbb{R}_+^m,\; \forall k,\quad \|\theta\|_q=1. \nonumber
\end{align}
Note that in the linear case, the stationarity condition $\theta = A^\top\lambda^k$ combined with primal feasibility $Az^k \ge b$, dual feasibility $\lambda^k \ge 0$, and the strong duality equality $\theta^\top z^k = b^\top\lambda^k$ together imply complementary slackness: $\lambda_i^k(a_i^\top z^k - b_i) = 0$ for all $i,k$.

This formulation has been studied extensively (e.g., \citealt{chan2014generalized, bertsimas2015data}) and admits tractable convex reformulations under certain norms and error measures. However, computational scaling issues persist with large datasets. Following \citet{keshavarz2011imputing} and \citet{aswani2018inverse}, a regularized version relaxes the KKT conditions by a tolerance $\epsilon>0$:
\begin{align}
\mathcal{R}\text{-}\mathcal{ICO}(\mathcal{X},\Omega,\epsilon):\quad
\min_{\theta,\{\lambda^k\},\{z^k\}} \ & \sum_{k=1}^K \|x^k - z^k\|_2^2 \label{eq:R-ICO}\\
\text{s.t.}\quad
& g_i(z^k) \le \epsilon,\ \forall i,k,\quad \lambda^k \in \mathbb{R}_+^m,\ \forall k, \nonumber\\
& \Bigl\| \sum_{j=1}^p \theta_j \nabla \phi_j(z^k) + \sum_{i=1}^m \lambda_i^k \nabla g_i(z^k) \Bigr\|_2 \le \epsilon,\ \forall k, \nonumber\\
& \theta \in \Theta. \nonumber
\end{align}
This relaxation improves robustness to noise and avoids exact complementarity constraints. Under Assumptions~\ref{assump:basis}--\ref{assump:normalization}, identifiability of $\theta$ requires sufficiently rich observations. If the basis evaluations $\{\phi_j(x^k)\}$ are linearly independent across $k$, then distinct parameters yield distinct objective functions, ensuring uniqueness up to normalization. Similar rank conditions appear in \citet{bertsimas2015data} and \citet{aswani2018inverse}.

\begin{remark}[Statistical Consistency]
When observations are generated as $x^k=x^*(\theta_0)+\xi^k$ with zero-mean noise, estimators based on $\mathcal{R}$-$\mathcal{ICO}$ are statistically consistent under regularity conditions \citep{aswani2018inverse}. The convergence rate depends on the noise variance, the number of samples $K$, and the geometry of the basis functions. Consistency of the \emph{parameter estimator} strictly requires the model to be identifiable. As we establish in Section \ref{subsec:nonidentifiability}, these identifiability conditions are generically violated in practice.
\end{remark}

\begin{remark}[Computational Complexity]\label{rem:ICO-complexity}
For general convex basis functions, both $\mathcal{ICO}$ and $\mathcal{R}$-$\mathcal{ICO}$ are nonconvex due to bilinearities in \eqref{eq:ICO} and complementarity constraints; only local or heuristic methods are tractable in general. For the linear case with affine $\phi_j$ and $g_i$, the relaxed formulation $\mathcal{R}$-$\mathcal{ICO}$ reduces to a convex quadratic or second-order cone program, hence polynomial-time solvable via interior-point methods. The exact formulation $\mathcal{ICO}$ remains NP-hard even in the linear case due to complementarity constraints. For quadratic or nonlinear forward models, $\mathcal{R}$-$\mathcal{ICO}$ remains biconvex, and global tractability is not guaranteed; in practice, alternating optimization and convex relaxations are used \citep{aswani2018inverse}.
\end{remark}

When $\mathcal{FO}(\theta,\Omega)$ admits multiple optimal solutions, the inverse formulation imputes variables $z^k$ that project each observation $x^k$ onto the corresponding optimal set, thereby resolving nonuniqueness of solutions for a fixed $\theta$. However, even with multiple (possibly noisy) observations, more than one parameter vector $\theta$ often rationalizes the data equally well. In linear models, any objective vector within the cone spanned by active constraint normals yields the same optimal face. The inverse optimization framework mitigates this ambiguity by enforcing joint consistency across all samples and by imposing normalization, but uniqueness of $\theta$ is rarely guaranteed. As such, the feasible set of parameters may remain large, and the solution to $\mathcal{ICO}$ or $\mathcal{R}$-$\mathcal{ICO}$ should be interpreted as one element of a potentially non-unique set.

%===========================================================

\subsection{Non-Identifiability of $\mathcal{ICO}$ and $\mathcal{R}$-$\mathcal{ICO}$}
\label{subsec:nonidentifiability}

In general, the inverse problems $\mathcal{ICO}$ and $\mathcal{R}$-$\mathcal{ICO}$ are \emph{set-valued}, i.e.,  even with normalization, the parameter $\theta \in \Theta$ compatible with a dataset need not be unique. We establish this for the inverse linear case and then extend the analysis to general convex models. Let $\Omega=\{x\in\mathbb{R}^n:Ax\ge b\}$ with rows $a_i^\top$ of $A\in\mathbb{R}^{m\times n}$, and let $f(x,\theta)=\theta^\top x$ (i.e., $\phi_j(x)=x_j$). For $z\in\Omega$, let $I(z):=\{i:a_i^\top z=b_i\}$ denote the active set, and let $N_\Omega(z)=\operatorname{cone}\{a_i: i\in I(z)\}$ be the normal cone at $z$. The forward KKT conditions imply that any optimal $z$ must satisfy $\theta \in N_\Omega(z)$, up to scaling.

\begin{proposition}[Normal Cone Intersection Structure]
\label{prop:cone-intersection}
Let $\{z^k\}_{k=1}^K\subseteq\Omega$ be imputed optima. Define the common normal cone intersection
\[
\mathcal{C} := \bigcap_{k=1}^K N_\Omega(z^k).
\]
Then $\mathcal{C}$ is a polyhedral cone. Moreover, let $I_\cap := \bigcap_{k=1}^K I(z^k)$ denote the common active set. Then:
\begin{enumerate}[(i)]
    \item $\operatorname{cone}\{a_i : i \in I_\cap\} \subseteq \mathcal{C}$.
    \item If $I_\cap \neq \emptyset$, then $\mathcal{C}$ is nonempty.
    \item $\mathcal{C}$ reduces to a single ray if and only if $\dim(\mathcal{C}) = 1$.
\end{enumerate}
\end{proposition}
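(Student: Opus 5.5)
The plan is to use the finite-generation description of each normal cone. Each $N_\Omega(z^k)=\operatorname{cone}\{a_i: i\in I(z^k)\}$ is finitely generated, so by the Minkowski--Weyl theorem it is a polyhedral cone. Concretely, there is a matrix $H_k$ with
\[
N_\Omega(z^k)=\{y: H_k y\le 0\}.
\]
(If $I(z^k)=\emptyset$, take the convention $\operatorname{cone}\emptyset=\{0\}$, which is still polyhedral.) Stacking the matrices $H_1,\ldots,H_K$ writes $\mathcal{C}$ as the solution set of finitely many homogeneous linear inequalities. Hence $\mathcal{C}$ is a polyhedral cone, and by Weyl's direction it is again finitely generated. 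The only care needed is to invoke Minkowski--Weyl rather than assume an intersection of generated cones is generated by the common generators; in general it is not, and that is exactly why (i) is only an inclusion.

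For (i), fix $i\in I_\cap$. Then $i\in I(z^k)$ for every $k$, so $a_i$ is one of the generators of each $N_\Omega(z^k)$. Each $N_\Omega(z^k)$ is a convex cone, so it contains every nonnegative combination of such $a_i$. Therefore $\operatorname{cone}\{a_i:i\in I_\cap\}\subseteq N_\Omega(z^k)$ for all $k$, and hence this cone lies in $\mathcal{C}$.

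For (ii), I will read ``nonempty'' as ``nontrivial'', i.e.\ $\mathcal{C}\neq\{0\}$, since $0\in\mathcal{C}$ always. By (i), $a_i\in\mathcal{C}$ for any $i\in I_\cap$. The remaining point is that $a_i\neq 0$. This holds because a zero row would give a constraint $0\ge b_i$, which is either redundant or infeasible. Slater's condition (Assumption~\ref{assump:slater}) gives $0>b_i$ for such a row, so it is never active; an active index therefore has $a_i\neq0$. Thus $\mathcal{C}$ contains a nonzero ray.

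For (iii), I will show both directions. If $\mathcal{C}=\{t d: t\ge 0\}$ with $d\neq0$, then its linear span is $\operatorname{span}\{d\}$, so $\dim\mathcal{C}=1$. Conversely, suppose $\dim\mathcal{C}=1$, so $\operatorname{span}\mathcal{C}=\operatorname{span}\{d\}$. Then $\mathcal{C}$ is a closed convex cone inside a line, so it is one of $\{td:t\ge0\}$, $\{td:t\le0\}$, or the whole line. The whole-line case must be excluded. Every $N_\Omega(z^k)$ is pointed whenever $\Omega$ has nonempty interior, which Slater's condition guarantees. To see this, suppose $y$ and $-y$ both lie in $N_\Omega(z^k)$. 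Then $y^\top(x-z^k)\ge0$ and $-y^\top(x-z^k)\ge0$ for all $x\in\Omega$, so $y^\top(x-z^k)=0$ on $\Omega$. Since $\Omega$ has nonempty interior, this forces $y=0$. Hence $\mathcal{C}$ is pointed, and the one-dimensional case is a single ray.
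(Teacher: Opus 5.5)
Your proof is correct and follows the same route as the paper: the finitely generated normal cones are polyhedral and so is their finite intersection, (i) uses the common generators, (ii) follows from (i), and (iii) is a dimension argument. Your extra steps also close gaps the paper leaves open. As literally stated, (ii) is vacuous since $0\in\mathcal{C}$, and you fix this by reading it as $\mathcal{C}\neq\{0\}$ and using Slater to get $a_i\neq 0$ for active indices. In (iii), a one-dimensional polyhedral cone need not be a ray, and you exclude the full-line case by showing each $N_\Omega(z^k)$ is pointed because $\Omega$ has nonempty interior.
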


The following proposition establishes explicit conditions under which $\mathcal{C}$ contains multiple rays, which is the key requirement for non-identifiability.

\begin{proposition}[Conditions for Multiple Rays in $\mathcal{C}$]
\label{prop:multiple-rays-conditions}
Let $\mathcal{C} = \bigcap_{k=1}^K N_\Omega(z^k)$ and let $I_\cap = \bigcap_{k=1}^K I(z^k)$. The cone $\mathcal{C}$ contains at least two distinct rays under any of the following conditions:
\begin{enumerate}[(i)]
    \item {(Rich common active set)} $|I_\cap| \geq 2$ and the vectors $\{a_i : i \in I_\cap\}$ are not all collinear, i.e., $\dim(\operatorname{span}\{a_i : i \in I_\cap\}) \geq 2$.
    \item {(Single observation at non-simple vertex)} $K = 1$ and $|I(z^1)| \geq 2$ with $\{a_i : i \in I(z^1)\}$ not all collinear.
    \item {(Aligned active sets)} For all $k$, the active sets satisfy $I(z^1) \subseteq I(z^k)$, and condition (ii) holds for $z^1$.
\end{enumerate}
Conversely, $\mathcal{C}$ is a single ray if and only if $\dim(\mathcal{C}) = 1$, which requires the observations to collectively impose sufficient linearly independent restrictions on feasible objective directions to leave exactly one degree of freedom.
\end{proposition}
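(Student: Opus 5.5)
The plan is to reduce all three conditions to part (i) of Proposition~\ref{prop:cone-intersection}, which gives $\operatorname{cone}\{a_i : i\in I_\cap\}\subseteq\mathcal{C}$. It then suffices to show that, under each hypothesis, $\operatorname{cone}\{a_i : i\in I_\cap\}$ contains two distinct rays. Two nonzero vectors $u,v$ generate the same ray exactly when $u=tv$ for some $t>0$. So we only need two elements of that cone that are not positive multiples of each other.

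\textbf{Condition (i).} Since $\dim(\operatorname{span}\{a_i : i\in I_\cap\})\ge 2$, there are indices $i,j\in I_\cap$ with $a_i$ and $a_j$ linearly independent; in particular both are nonzero. The rays $\{ta_i:t>0\}$ and $\{ta_j:t>0\}$ then lie in $\operatorname{cone}\{a_i : i\in I_\cap\}\subseteq\mathcal{C}$. They are distinct, because $a_i=ta_j$ would contradict linear independence. The non-collinearity hypothesis is needed here, because collinear vectors could still coincide on one ray (when they are positive multiples of each other), so the argument must pick a genuinely independent pair.

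\textbf{Conditions (ii) and (iii).} For (ii), take $K=1$. Then $I_\cap=I(z^1)$ and $\mathcal{C}=N_\Omega(z^1)$, so the hypothesis is exactly that of (i). For (iii), the inclusions $I(z^1)\subseteq I(z^k)$ for all $k$ give $I_\cap=\bigcap_k I(z^k)=I(z^1)$. Condition (ii) applied to $z^1$ then says that $I_\cap$ contains a non-collinear pair, so (i) applies. In (iii) I read ``condition (ii) holds for $z^1$'' as the active-set condition on $z^1$ only, without requiring $K=1$.

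\textbf{Converse.} This is part (iii) of Proposition~\ref{prop:cone-intersection}. A polyhedral cone with $\dim\mathcal{C}=1$ lies in a line $\operatorname{span}\{v\}$; being a cone, it is one of $\{0\}$, a ray, or the whole line. I would exclude the line in one of two ways. First, if $\mathcal{C}$ contained $\pm v$, both would be normal to $\Omega$ at $z^1$, so $\Omega$ would lie in the hyperplane $\{x: v^\top(x-z^1)=0\}$, contradicting Slater's condition (Assumption~\ref{assump:slater}). Alternatively, under the normalization with $\theta\in\mathbb{R}^p_+$ only the nonnegative part is relevant. With the line excluded, $\mathcal{C}$ is a single ray. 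Conversely, a single ray plainly has dimension one. The phrase ``sufficient linearly independent restrictions'' is interpretive: $\dim\mathcal{C}=1$ means the constraints $\theta\in N_\Omega(z^k)$ together cut the $n$-dimensional space of objective directions down to one degree of freedom. I would state this as a restatement rather than prove anything further.

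The main obstacle is not technical. It is pinning down the exact meaning of ``distinct rays'' and the hypotheses in (iii), and handling the degenerate line case in the converse cleanly. I will try the Slater argument first.
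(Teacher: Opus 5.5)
Your proposal is correct and follows the paper's argument. Everything is reduced to the inclusion $\operatorname{cone}\{a_i : i\in I_\cap\}\subseteq\mathcal{C}$ from Proposition~\ref{prop:cone-intersection}(i), with (ii) and (iii) handled by noting that the relevant cone is $N_\Omega(z^1)$; the paper phrases (iii) as $N_\Omega(z^k)\supseteq N_\Omega(z^1)$, which is the same observation as your $I_\cap=I(z^1)$. Your Slater-based exclusion of the case where $\mathcal{C}$ is a full line is a genuine tightening, because the paper's converse simply invokes Proposition~\ref{prop:cone-intersection}(iii), which as stated overlooks that a one-dimensional polyhedral cone can be a line rather than a ray.
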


Condition (ii) of Proposition~\ref{prop:multiple-rays-conditions} is satisfied at any vertex of a polyhedron in $\mathbb{R}^n$ where at least $n$ constraints are active with linearly independent normals (the generic case). Even with multiple observations, condition (i) holds whenever all observations share at least two non-collinear active constraint normals---a common occurrence when observations cluster near a particular face or vertex. The single-ray case $\dim(\mathcal{C}) = 1$ is thus non-generic and requires precise geometric alignment across observations. We now state the main non-identifiability theorem for inverse linear programming.

\begin{theorem}[Non-Identifiability of $\mathcal{ICO}$ in Linear Programs]
\label{thm:nonidentifiability-lp}
Let $\{z^k\}_{k=1}^K\subseteq\Omega$ be imputed optima and let $\mathcal{C} = \bigcap_{k=1}^K N_\Omega(z^k)$. If any of the conditions in Proposition~\ref{prop:multiple-rays-conditions} hold, then the feasible set $\mathcal{C} \cap \Theta$ contains at least two distinct points for any normalization set $\Theta$ of the form $\Theta = \{\theta : \|\theta\|_2 = 1\}$ or $\Theta = \{\theta : \mathbf{1}^\top \theta = 1, \theta \geq 0\}$. Consequently, $\mathcal{ICO}$ admits multiple feasible parameters for the same dataset, and the inverse problem is non-identifiable.
\end{theorem}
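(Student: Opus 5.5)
The plan is to reduce the statement to elementary cone geometry, using Proposition~\ref{prop:multiple-rays-conditions} as the source of two non-collinear directions. Under any of conditions (i)--(iii), that proposition gives vectors $u,v\in\mathcal{C}$, both nonzero, that span distinct rays, so $u\neq \mu v$ for every $\mu>0$. For instance, in case (i) one can take $u=a_i$ and $v=a_{i'}$ for two non-collinear normals with $i,i'\in I_\cap$; these lie in $\mathcal{C}$ by Proposition~\ref{prop:cone-intersection}(i). Cases (ii) and (iii) reduce to (i), because then $I_\cap=I(z^1)$. Since $\mathcal{C}$ is a convex cone, every conic combination $\alpha u+\beta v$ with $\alpha,\beta\ge 0$ also lies in $\mathcal{C}$. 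The remaining work is to show that these rays meet each normalization set in at least two distinct points.

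For $\Theta=\{\theta:\|\theta\|_2=1\}$ this is immediate. The points $u/\|u\|_2$ and $v/\|v\|_2$ lie in $\mathcal{C}$ by positive homogeneity and have unit norm. They are distinct, since otherwise $u$ would be a positive multiple of $v$. Every $\theta$ on the normalized arc $(\alpha u+\beta v)/\|\alpha u+\beta v\|_2$ also lies in $\mathcal{C}\cap\Theta$, so the compatible set is in fact a continuum.

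For the simplex $\Theta=\{\theta\ge 0:\mathbf 1^\top\theta=1\}$, I would use the same construction with $\ell_1$-type scaling: a nonzero $w\in\mathcal{C}\cap\mathbb{R}^p_+$ maps to $w/\mathbf 1^\top w\in\mathcal{C}\cap\Theta$. Two nonnegative non-collinear $u,v$ therefore give the distinct points $u/\mathbf 1^\top u$ and $v/\mathbf 1^\top v$. This is where I expect the main obstacle: the directions produced by Proposition~\ref{prop:multiple-rays-conditions}, such as constraint normals $a_i$, need not be nonnegative, so the simplex can miss some rays of $\mathcal{C}$.

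To handle this, I would use that $\mathcal{ICO}$ is assumed feasible, so some $\theta_0\in\mathcal{C}\cap\Theta$ exists, and perturb it. Write $\theta_0=\sum_{i\in I_\cap}\lambda_i a_i$, or more generally as a conic combination of generators of $\mathcal{C}$. Using the second non-collinear generator, set $\theta_t=\theta_0+t u$ with $u\in\mathcal{C}$ chosen not collinear with $\theta_0$. For small $t$ of the appropriate sign, $\theta_t$ stays in $\mathcal{C}$, and stays nonnegative provided $u$ is nonnegative on the zero coordinates of $\theta_0$. If instead $\theta_0$ is a positive combination of the generators, one can also move slightly toward $-u$ while remaining in $\mathcal{C}$. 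Renormalizing $\theta_t$ then yields a second point of $\Theta$. If this sign condition cannot be guaranteed in general, I would state the simplex case under the natural reading $\dim(\mathcal{C}\cap\mathbb{R}^p_+)\ge 2$, i.e.\ the multiple rays lie in the admissible orthant required by Assumption~\ref{assump:normalization}, and argue as in the $\ell_2$ case.

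Finally, each of these parameters is feasible for $\mathcal{ICO}$ with the same imputed $z^k$. If $\theta\in N_\Omega(z^k)$, write $\theta=A^\top\lambda^k$ with $\lambda^k\ge 0$ supported on $I(z^k)$. Then stationarity holds, and complementary slackness gives $\theta^\top z^k=b^\top\lambda^k$. Hence each $\theta\in\mathcal{C}\cap\Theta$ attains the same objective value, and the inverse problem is non-identifiable.
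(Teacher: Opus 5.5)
Your treatment of the Euclidean normalization and of $\mathcal{ICO}$-feasibility is correct and is essentially the paper's argument. There, multipliers supported on $I(z^k)$ give $\theta^\top z^k=b^\top\lambda^k$. Your remark that conditions (ii) and (iii) reduce to (i) because $I_\cap=I(z^1)$ is also right.

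The gap is the simplex case, and you located it exactly. The perturbation $\theta_0+tu$ needs $u$ to be nonnegative on the zero coordinates of $\theta_0$. The move toward $-u$ additionally needs $\theta_0$ to carry positive weight on $u$. Neither is guaranteed.

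No argument can remove this, because the simplex claim is false as stated. Take $\Omega=\{x\in\mathbb{R}^2: x_1\ge 0,\ x_1-x_2\ge 0\}$, which satisfies Slater at $(1,0)$, with $K=1$ and $z^1=0$. Both constraints are active with non-collinear normals $a_1=(1,0)$ and $a_2=(1,-1)$, so condition (ii) holds. But $\mathcal{C}=\{(\alpha+\beta,-\beta):\alpha,\beta\ge 0\}$ meets $\mathbb{R}^2_+$ only in $\operatorname{cone}\{(1,0)\}$. Hence $\mathcal{C}\cap\Theta=\{(1,0)\}$ for the simplex normalization. In fact every simplex point with $\theta_2>0$ makes the forward LP unbounded, so even the ``consequently'' clause fails. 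Your perturbation fails in both directions here: for $t>0$, $(1+t,-t)\notin\mathbb{R}^2_+$ and $(1-t,t)\notin\mathcal{C}$.

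Your fallback is therefore the correct repair, and it is sharp. Distinct points of the simplex are linearly independent, so $\mathcal{C}\cap\Theta$ contains two points if and only if $\dim(\mathcal{C}\cap\mathbb{R}^p_+)\ge 2$. Once that holds, your $\ell_1$-scaling finishes the argument.

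The paper's proof does no better. Its argument for $\dim(\mathcal{C}\cap\mathbb{R}^n_+)\ge 2$ ends by conceding that the conclusion follows only if the relevant normals $a_{i_1},a_{i_2}$ are nonnegative. You should state the simplex part with this hypothesis explicitly, rather than present it as a consequence of Proposition~\ref{prop:multiple-rays-conditions} alone.
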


\begin{figure}[t]
\usetikzlibrary{calc, arrows.meta}
\centering
\begin{tikzpicture}[scale=0.88, >=Stealth, font=\small]

% ---- Panel (a): Single vertex, high-dimensional normal cone ----
\begin{scope}[shift={(-0.3,0)}]

\node[font=\normalsize\bfseries, anchor=south] at (2.5,5.8) {(a) Single observation at vertex};

% Polyhedron
\coordinate (A1) at (0.5,0.5);
\coordinate (A2) at (4.5,0);
\coordinate (A3) at (5,3);
\coordinate (A4) at (3,5);
\coordinate (A5) at (0.5,4);

\fill[lightgray!40] (A1)--(A2)--(A3)--(A4)--(A5)--cycle;
\draw[thick, gray!70] (A1)--(A2)--(A3)--(A4)--(A5)--cycle;

% Label edges with constraint indices
\node[font=\footnotesize, gray!70, rotate=-10] at (2.3,-0.15) {$g_1$};
\node[font=\footnotesize, gray!70, rotate=80] at (5.15,1.5) {$g_2$};
\node[font=\footnotesize, gray!70, rotate=-45] at (4.3,4.3) {$g_3$};
\node[font=\footnotesize, gray!70, rotate=80] at (0.2,2.2) {$g_5$};

\node at (2.6,2.2) {$\Omega$};

% Vertex z^1 at A3 (two constraints active)
\fill[darkblue] (A3) circle (3pt);
\node[right=4pt, darkblue, font=\small\bfseries] at (A3) {$z^1$};

% Normal cone shading
\fill[conefill1, opacity=0.7] (A3) -- 
  ($(A3)+(1.8,-0.3)$) arc[start angle=-9.5, end angle=45, radius=1.825] -- cycle;

% Constraint normals
\draw[->, very thick, darkred] (A3) -- ($(A3)+(1.5,-0.25)$) 
  node[below right=-1pt, font=\small] {$a_2$};
\draw[->, very thick, darkred] (A3) -- ($(A3)+(1.0,1.0)$) 
  node[above right=-1pt, font=\small] {$a_3$};

% Multiple theta vectors in the cone
\draw[->, line width=1.8pt, medblue, dashed] (A3) -- ($(A3)+(1.4,0.2)$);
\node[medblue, font=\small\bfseries] at ($(A3)+(1.75,0.25)$) {$\theta^{(1)}$};
\draw[->, line width=1.8pt, darkgreen, dashed] (A3) -- ($(A3)+(0.9,0.7)$);
\node[darkgreen, font=\small\bfseries] at ($(A3)+(0.6,1.05)$) {$\theta^{(2)}$};

\node[font=\footnotesize, darkred] at ($(A3)+(2.1,0.8)$) {$N_\Omega(z^1)$};

% Annotation
\node[font=\footnotesize, text width=10cm, anchor=north west] at (-0.3,-0.8) {
$|I(z^1)| = 2$ with non-collinear $a_2, a_3$\\
$\Rightarrow$ $\dim(N_\Omega(z^1)) = 2$\\
$\Rightarrow$ Multiple $\theta$ feasible %\\\quad\; 
(Prop.~\ref{prop:multiple-rays-conditions}(ii))
};

\end{scope}

% ---- Panel (b): Multiple observations, cone intersection ----
\begin{scope}[shift={(8.5,0)}]

\node[font=\normalsize\bfseries, anchor=south] at (2.5,5.8) {(b) Intersection $\mathcal{C} = \bigcap_k N_\Omega(z^k)$};

% Polyhedron  
\coordinate (B1) at (0,0.5);
\coordinate (B2) at (3.5,-0.2);
\coordinate (B3) at (5,1.5);
\coordinate (B4) at (4.8,3.5);
\coordinate (B5) at (3,5);
\coordinate (B6) at (0.8,4.5);

\fill[lightgray!40] (B1)--(B2)--(B3)--(B4)--(B5)--(B6)--cycle;
\draw[thick, gray!70] (B1)--(B2)--(B3)--(B4)--(B5)--(B6)--cycle;

\node at (2.3,2.3) {$\Omega$};

% Vertex z^1 at B3
\fill[darkblue] (B3) circle (3pt);
\node[right=3pt, darkblue, font=\small\bfseries] at (B3) {$z^1$};

% Small normal cone at z^1
\fill[conefill1, opacity=0.4] (B3) -- 
  ($(B3)+(1.2,-0.5)$) arc[start angle=-22.6, end angle=55, radius=1.3] -- cycle;
\draw[->, thick, darkred!60] (B3) -- ($(B3)+(0.95,-0.4)$);
\draw[->, thick, darkred!60] (B3) -- ($(B3)+(0.5,0.9)$);
\node[font=\tiny, darkred!60] at ($(B3)+(1.5,0.3)$) {$N_\Omega(z^1)$};

% Vertex z^2 at B4
\fill[darkblue] (B4) circle (3pt);
\node[right=3pt, darkblue, font=\small\bfseries] at (B4) {$z^2$};

% Small normal cone at z^2
\fill[conefill2, opacity=0.4] (B4) -- 
  ($(B4)+(0.7,0.7)$) arc[start angle=45, end angle=100, radius=0.99] -- cycle;
\draw[->, thick, medblue!60] (B4) -- ($(B4)+(0.5,0.5)$);
\draw[->, thick, medblue!60] (B4) -- ($(B4)+(-0.1,0.75)$);
\node[font=\tiny, medblue!70] at ($(B4)+(0.9,0.9)$) {$N_\Omega(z^2)$};

% Common constraint normal (shared edge B3-B4)
\coordinate (common_dir) at ($(B3)!0.5!(B4) + (1.2,0.12)$);

% Intersection cone
\draw[->, line width=2pt, amber!80!red] ($(B3)!0.5!(B4)$) -- (common_dir);
\node[font=\footnotesize, amber!80!red, right] at (common_dir) {$\mathcal{C}$};

% Shared active constraint indicator
\draw[ultra thick, amber!80!red] (B3)--(B4);
\node[font=\footnotesize, amber!60!red, left=2pt] at ($(B3)!0.5!(B4)$) {shared};

% Annotation
\node[font=\footnotesize, text width=10cm, anchor=north west] at (-0.3,-0.8) {
If $\dim(\mathcal{C}) = 1$: unique $\theta$ (up to scale)\\
If $\dim(\mathcal{C}) \geq 2$: non-identifiable%\\\quad\; 
 ~~(Thm.~\ref{thm:nonidentifiability-lp})\\[2pt]
Generic case: $\dim(\mathcal{C}) \geq 2$
};

\end{scope}

\end{tikzpicture}
\caption{Geometric Illustration of non-identifiability in inverse linear optimization.
(a)~At a vertex $z^1$ where two non-collinear constraints are active, the normal cone $N_\Omega(z^1)$ is two-dimensional, admitting multiple feasible parameters $\theta^{(1)}, \theta^{(2)} \in N_\Omega(z^1) \cap \Theta$ (Proposition~\ref{prop:multiple-rays-conditions}(ii)).
(b)~With two observations $z^1, z^2$, the feasible parameter set is the intersection $\mathcal{C} = N_\Omega(z^1) \cap N_\Omega(z^2)$. Non-identifiability persists unless observations reduce $\mathcal{C}$ to a single ray (Theorem~\ref{thm:nonidentifiability-lp}).}
\label{fig:nonidentifiability-geometry}
\end{figure}

\begin{corollary}
[Persistence of Non-Identifiability Under Regularization]
\label{thm:nonidentifiability-rico-lp}
Under the conditions of Theorem~\ref{thm:nonidentifiability-lp}, the feasible set of $\mathcal{R}$-$\mathcal{ICO}(\mathcal{X},\Omega,\epsilon)$ also contains multiple feasible $\theta$ for all $\epsilon \ge 0$. That is, regularization does not resolve the non-identifiability present in $\mathcal{ICO}$.
\end{corollary}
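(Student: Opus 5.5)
The argument is a monotonicity (nesting) argument: I will show that every parameter feasible for $\mathcal{ICO}$ stays feasible for $\mathcal{R}$-$\mathcal{ICO}(\mathcal{X},\Omega,\epsilon)$ for every $\epsilon\ge 0$, reusing the same imputed optima and multipliers. Then the two distinct points of $\mathcal{C}\cap\Theta$ from Theorem~\ref{thm:nonidentifiability-lp} are both feasible for the regularized problem. No new geometry is needed; the work is in specializing the constraints of \eqref{eq:R-ICO} to the linear case and matching them to the KKT system of \eqref{eq:ILO}.

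\textbf{Step 1 (specialize to the LP).} With $\phi_j(x)=x_j$ and $g_i(x)=b_i-a_i^\top x$, we have $\nabla\phi_j=e_j$ and $\nabla g_i=-a_i$. So the relaxed constraints of \eqref{eq:R-ICO} read
\[
b_i-a_i^\top z^k\le\epsilon \ \ \forall i,k,\qquad \Bigl\|\theta-A^\top\lambda^k\Bigr\|_2\le\epsilon \ \ \forall k,\qquad \lambda^k\ge 0,\ \theta\in\Theta .
\]

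\textbf{Step 2 (construct certificates for each candidate).} Take the imputed optima $\{z^k\}\subseteq\Omega$ from the hypotheses of Theorem~\ref{thm:nonidentifiability-lp}. Let $\theta^{(1)}\neq\theta^{(2)}$ be the two points of $\mathcal{C}\cap\Theta$ that the theorem provides.

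Fix $\ell\in\{1,2\}$ and $k$. Since $\theta^{(\ell)}\in N_\Omega(z^k)=\operatorname{cone}\{a_i:i\in I(z^k)\}$, there are $\lambda^{k,\ell}\ge 0$ supported on $I(z^k)$ with $\theta^{(\ell)}=A^\top\lambda^{k,\ell}$. Check the relaxed constraints in turn:
\begin{itemize}
\item the stationarity residual is exactly $0\le\epsilon$;
\item since $z^k\in\Omega$, we have $b_i-a_i^\top z^k\le 0\le\epsilon$;
\item $\theta^{(\ell)}\in\Theta$ holds by construction.
\end{itemize}
Complementarity also holds exactly, because $\lambda^{k,\ell}$ is supported on the active set, although $\mathcal{R}$-$\mathcal{ICO}$ does not even require it. Hence $(\theta^{(\ell)},\{\lambda^{k,\ell}\},\{z^k\})$ is feasible for every $\epsilon\ge 0$, including $\epsilon=0$.

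\textbf{Step 3 (conclude).} Both $\theta^{(1)}$ and $\theta^{(2)}$ appear in feasible points of $\mathcal{R}$-$\mathcal{ICO}$ with the same $\{z^k\}$, hence with the same objective value $\sum_k\|x^k-z^k\|_2^2$. So neither the feasible set nor the objective separates them. The relaxed feasible parameter set contains $\mathcal{C}\cap\Theta$, and it is nondecreasing in $\epsilon$ because the constraints only loosen as $\epsilon$ grows. This justifies the phrase that regularization ``can only enlarge'' the set.

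\textbf{Main obstacle: consistency of the normalization set.} Theorem~\ref{thm:nonidentifiability-lp} is stated for $\Theta$ equal to the unit sphere or the simplex, whereas $\mathcal{R}$-$\mathcal{ICO}$ uses the set of Assumption~\ref{assump:normalization} (nonnegativity and $\theta_{j_0}\ge\alpha$). My plan is to read the corollary with the same $\Theta$ as in the theorem.

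If the Assumption~\ref{assump:normalization} set must be used, I would first try to reuse the theorem's argument. That argument takes two non-collinear generators in $\mathcal{C}$ and normalizes them. I would instead take a relative-interior ray of $\mathcal{C}\cap\mathbb{R}^n_+$ together with a small perturbation of it inside $\mathcal{C}$, then rescale both to $\|\cdot\|_q=1$. For $\alpha$ small enough, the condition $\theta_{j_0}\ge\alpha$ is preserved.

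If $\mathcal{C}\cap\{\theta\ge 0,\theta_{j_0}>0\}$ has dimension at least 2, this yields two distinct feasible parameters. I would note this requirement explicitly as the caveat rather than claim it unconditionally.
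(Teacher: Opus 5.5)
Your proposal is correct and follows essentially the paper's argument. You take two distinct elements of $\mathcal{C}$, write each as $A^\top\lambda^k$ with $\lambda^k\ge 0$ supported on $I(z^k)$, normalize, and note that the stationarity residual is exactly zero, so both are feasible for every $\epsilon\ge 0$. You are also somewhat more careful than the paper: its proof normalizes by $\|\cdot\|_2$ without checking the relaxed primal constraints, and it does not address the mismatch you flag between the theorem's normalization and the $\Theta$ of Assumption~\ref{assump:normalization}.
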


We now extend the non-identifiability analysis to parametric convex objectives of the form $f(x,\theta) = \sum_{j=1}^p \theta_j \phi_j(x)$. Let $A(z) := [\nabla\phi_1(z) \cdots \nabla\phi_p(z)] \in \mathbb{R}^{n \times p}$ denote the matrix of basis function gradients evaluated at $z$. The KKT stationarity condition for $z$ to be optimal can be written as
\begin{equation}\label{eq:kkt-general}
A(z)\theta \in -N_\Omega(z).
\end{equation}
For a collection of imputed optima $\{z^k\}_{k=1}^K$, define the parameter feasibility set
\begin{equation}\label{eq:param-feas-set}
\mathcal{S} := \bigcap_{k=1}^K \bigl\{\theta \in \mathbb{R}^p : A(z^k)\theta \in -N_\Omega(z^k)\bigr\}.
\end{equation}

\begin{proposition}[Feasibility Set Structure for Convex Models]
\label{prop:feasibility-set-structure}
The set $\mathcal{S}$ is a (possibly empty) polyhedral cone in $\mathbb{R}^p$. Its dimension depends on both the rank structure of the gradient matrices $\{A(z^k)\}_{k=1}^K$ and the dimensions of the normal cones $\{N_\Omega(z^k)\}_{k=1}^K$.
\end{proposition}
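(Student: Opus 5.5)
To prove Proposition~\ref{prop:feasibility-set-structure}, I will first show that each single-observation set $\mathcal{S}_k := \{\theta \in \mathbb{R}^p : A(z^k)\theta \in -N_\Omega(z^k)\}$ is a polyhedral cone. Then I will use that finite intersections of polyhedral cones are polyhedral cones. Writing $\Omega=\{x: g_i(x)\le 0\}$ and $I(z)=\{i: g_i(z)=0\}$, Slater's condition (Assumption~\ref{assump:slater}) gives the standard representation of the normal cone of a convex set at a point as the cone generated by the active gradients:
\[
N_\Omega(z) = \operatorname{cone}\{\nabla g_i(z) : i \in I(z)\}.
\]
(With $g_i(x)=b_i-a_i^\top x$ one gets $-N_\Omega(z)=\operatorname{cone}\{a_i : i\in I(z)\}$, so the sign conventions agree with the linear case.) This is exactly the content of the KKT stationarity condition in \eqref{eq:ICO}. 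It shows that $-N_\Omega(z^k)$ is finitely generated, hence a polyhedral cone by the Minkowski--Weyl theorem. Let $G_k$ be the matrix whose columns are $-\nabla g_i(z^k)$, $i\in I(z^k)$. Then
\[
\mathcal{S}_k=\{\theta : \exists \lambda\ge 0,\ A(z^k)\theta = G_k\lambda\}.
\]

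The key step is to recognize $\mathcal{S}_k$ as a polyhedral cone. It is the preimage of the polyhedral cone $-N_\Omega(z^k)$ under the linear map $\theta\mapsto A(z^k)\theta$. Equivalently, it is the projection onto the $\theta$-coordinates of the polyhedral cone
\[
\{(\theta,\lambda): A(z^k)\theta - G_k\lambda = 0,\ \lambda\ge 0\}.
\]
Projections of polyhedral cones are polyhedral cones (Fourier--Motzkin elimination, or Minkowski--Weyl applied to the projected generators). Alternatively, by Weyl's theorem, $-N_\Omega(z^k)=\{y : H_k y\le 0\}$ for some matrix $H_k$, so $\mathcal{S}_k=\{\theta: H_k A(z^k)\theta\le 0\}$, which is directly an H-representation. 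I would use this second route because it is cleaner. Then
\[
\mathcal{S}=\bigcap_k \mathcal{S}_k=\{\theta : H_kA(z^k)\theta\le 0\ \forall k\}
\]
is a polyhedral cone. It always contains $0$, so "possibly empty" should be read as possibly $\{0\}$, or empty after intersecting with $\Theta$; I will remark on this.

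For the dimension claim, I will show that $\dim\mathcal{S}$ is governed by two quantities.

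\textbf{Gradient degeneracy.} The lineality space of $\mathcal{S}$ contains $\bigcap_k \ker A(z^k)$, since any $\theta$ in this space maps to $0\in -N_\Omega(z^k)$. Hence $\dim\mathcal{S}\ge \dim\bigcap_k\ker A(z^k)$, which is positive whenever the stacked matrix $[A(z^1);\dots;A(z^K)]$ has rank less than $p$.

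\textbf{Normal cone size.} Conversely, $A(z^k)\mathcal{S}_k\subseteq -N_\Omega(z^k)$ and $\mathcal{S}_k = A(z^k)^{-1}\bigl(-N_\Omega(z^k)\cap \operatorname{range}A(z^k)\bigr)$. This gives
\[
\dim \mathcal{S}_k = \dim\ker A(z^k) + \dim\bigl(-N_\Omega(z^k)\cap\operatorname{range}A(z^k)\bigr)
\]
whenever the intersection is nonempty in the relevant sense. So $\dim\mathcal{S}\le \min_k \bigl(\dim\ker A(z^k)+\dim N_\Omega(z^k)\bigr)$.

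These two bounds make precise the statement that the dimension depends on both the rank structure and the normal cone dimensions. The main obstacle I expect is the normal cone representation when a constraint is linear or degenerate at $z^k$. Slater's condition suffices for the convex case, so it will be cited rather than reproved. Since the statement is qualitative, I will present the dimension bounds as illustrative rather than sharp.
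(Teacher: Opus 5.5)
Your proof is correct and follows the same route as the paper. The paper also uses Slater to write $N_\Omega(z^k)$ as the cone generated by the active gradients, treats each constraint as the preimage of a polyhedral cone under $\theta\mapsto A(z^k)\theta$, and intersects finitely many such cones. Your dimension bounds go beyond the paper, which leaves the dimension claim qualitative: $\bigcap_k\ker A(z^k)$ lies in the lineality space, and $\dim\mathcal{S}\le\min_k\bigl(\dim\ker A(z^k)+\dim N_\Omega(z^k)\bigr)$. Your remark that $0\in\mathcal{S}$ always, so ``possibly empty'' can only mean possibly $\{0\}$, is also a correct sharpening of the statement.
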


\begin{proposition}[Conditions for Non-Identifiability in Convex Models]
\label{prop:convex-multiple-rays}
The set $\mathcal{S}$ contains at least two non-collinear rays (hence $\mathcal{S} \cap \Theta$ contains multiple points for standard normalizations) under any of the following conditions:
\begin{enumerate}[(i)]
    \item %{(Rank deficiency)} 
    $\bigcap_{k=1}^K \ker(A(z^k)) \neq \{0\}$, i.e., there exists $\theta_0 \neq 0$ with $A(z^k)\theta_0 = 0$ for all $k$, and $\mathcal{S}$ contains at least one element $\bar{\theta}$ not parallel to $\theta_0$.
    \item %{(High-dimensional normal cones)} 
    For some $k_0$, $\dim(N_\Omega(z^{k_0})) \geq 2$, $A(z^{k_0})$ has full column rank $p$, and $\dim\bigl(\operatorname{range}(A(z^{k_0})) \cap N_\Omega(z^{k_0})\bigr) \geq 2$
    \item %{(Rank-deficient excitation)} 
    The excitation matrix $S = \sum_{k=1}^K A(z^k)^\top P_k A(z^k)$ is singular, $\mathcal{S}$ is non-empty, and there exists $\bar{\theta} \in \mathcal{S}$ not parallel to any $\Delta\theta \in \ker(S) \setminus \{0\}$.
\end{enumerate}
\end{proposition}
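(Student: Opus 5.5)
Each of the three conditions yields two linearly independent elements of $\mathcal{S}$. By Proposition~\ref{prop:feasibility-set-structure}, $\mathcal{S}$ is a convex cone, so nonnegative combinations of these elements stay in $\mathcal{S}$. Once two non-collinear rays are in hand, the conclusion about $\mathcal{S}\cap\Theta$ follows as in Theorem~\ref{thm:nonidentifiability-lp}: we rescale two non-collinear elements to the normalization sphere, or to the simplex when they lie in $\mathbb{R}^p_+$. Non-collinearity guarantees that the rescaled points are distinct.

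\emph{Case (i).} Take $\theta_0\in\bigcap_k\ker A(z^k)$ and $\bar\theta\in\mathcal{S}$ not parallel to $\theta_0$. For every $t\in\mathbb{R}$ and every $k$ we have $A(z^k)(\bar\theta+t\theta_0)=A(z^k)\bar\theta\in -N_\Omega(z^k)$, so the whole line $\bar\theta+t\theta_0$ lies in $\mathcal{S}$. For $t\neq 0$ small, the vectors $\bar\theta$ and $\bar\theta+t\theta_0$ are linearly independent, because $\bar\theta\not\parallel\theta_0$. This gives two non-collinear rays.

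\emph{Case (ii).} I read the condition as $\dim\bigl(\operatorname{range}(A(z^{k_0}))\cap(-N_\Omega(z^{k_0}))\bigr)\ge 2$, absorbing the sign convention of \eqref{eq:kkt-general}. Then there are two linearly independent vectors $v_1,v_2$ in this intersection. Full column rank of $A(z^{k_0})$ gives unique preimages $\theta^1,\theta^2$, and injectivity preserves linear independence. This only shows feasibility of the $k_0$ constraint. For the other $k$, I would note that the statement concerns the single-index set in the spirit of Proposition~\ref{prop:multiple-rays-conditions}(ii), or else add the hypothesis that the remaining constraints are non-binding. Concretely, when $K=1$, or when the preimage cone $A(z^{k_0})^{-1}(-N_\Omega(z^{k_0}))$, which is a polyhedral cone of dimension $\ge2$, is contained in the other constraint sets, we obtain $\theta^1,\theta^2\in\mathcal{S}$.

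\emph{Case (iii).} Here I would mimic case (i). For $\Delta\theta\in\ker S\setminus\{0\}$, positive semidefiniteness of each summand of $S$ gives $P_kA(z^k)\Delta\theta=0$ for every $k$. Interpreting $P_k$ as the projector onto the orthogonal complement of the span of the normal cone, $A(z^k)\Delta\theta$ then lies in $\operatorname{span}N_\Omega(z^k)$. Starting from $\bar\theta\in\mathcal{S}$, I would perturb along $\Delta\theta$. If $A(z^k)\bar\theta$ lies in the relative interior of $-N_\Omega(z^k)$, then $A(z^k)(\bar\theta+t\Delta\theta)$ stays in that cone for small $|t|$. Since $\bar\theta$ is not parallel to $\Delta\theta$, this produces a second ray.

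The main obstacle is case (iii). Membership in the span does not ensure membership in the cone when $A(z^k)\bar\theta$ lies on a relative boundary. If the relative-interior property fails, my first move is to select $\bar\theta$ in the relative interior of $\mathcal{S}$ and work with the minimal faces of the normal cones that contain $A(z^k)\bar\theta$. I would take $P_k$ to be the projector onto the orthogonal complement of the span of those faces, so that the perturbation stays inside each face for small $|t|$.
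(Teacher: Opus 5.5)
Your cases (i) and (ii) match the paper's proof. In (ii), you restrict to $K=1$, or to the case where the preimage cone $A(z^{k_0})^{-1}(-N_\Omega(z^{k_0}))$ lies inside the other constraint sets. That is exactly the caveat the paper's own proof has to add, and it cannot be dropped. With $A(z)=I$, a second observation on an edge whose normal cone meets $N_\Omega(z^{k_0})$ in a single ray already makes $\mathcal{S}$ a single ray. No sign reinterpretation is needed there: $\operatorname{range}(A(z^{k_0}))$ is a subspace, so its intersections with $N_\Omega(z^{k_0})$ and $-N_\Omega(z^{k_0})$ are negatives of each other and have the same dimension.

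The genuine gap is in (iii), and your fallback does not close it. The missing observation is $\mathcal{S}\subseteq\ker S$. For $\theta\in\mathcal{S}$ one has $A(z^k)\theta=-\alpha_k n^k$, hence $P_kA(z^k)\theta=0$ for every $k$ and $\theta^\top S\theta=\sum_k\|P_kA(z^k)\theta\|_2^2=0$. The same holds with your projector onto $(\operatorname{span}N_\Omega(z^k))^\perp$. So every nonzero $\bar\theta\in\mathcal{S}$ is itself a nonzero element of $\ker S$, and no nonzero $\bar\theta$ can satisfy the hypothesis of (iii). Your step ``since $\bar\theta$ is not parallel to $\Delta\theta$'' therefore relies on a condition incompatible with $\bar\theta\in\mathcal{S}$, and (iii) as written is true only vacuously.

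Under the natural repaired reading (some $\Delta\theta\in\ker S\setminus\{0\}$ not parallel to $\bar\theta$), the relative-boundary obstruction you flagged is fatal rather than technical. Take $p=2$, $A(z^1)=[\,0\ \ n^1\,]$, $A(z^2)=[\,0\ \ {-n^2}\,]$. This is realizable on the strip $\{|x_2|\le 1\}$ with $z^{1,2}=(0,\pm1)$, $\phi_2(x)=x_2$, and $\phi_1(x)=(\max\{0,\|x\|_2-2\})^2$. Then $S=0$, and $\mathcal{S}=\{\theta:\theta_2=0\}$ contains only the opposite rays $\pm e_1$. With $\bar\theta=e_1$ and $\Delta\theta=e_2$ you get $\alpha_1=\alpha_2=0$, $\beta_1=1$, $\beta_2=-1$, so $\bar\theta+t\Delta\theta\notin\mathcal{S}$ for every $t\neq 0$.

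Picking $\bar\theta\in\operatorname{relint}\mathcal{S}$ changes nothing here. Redefining $P_k$ through minimal faces (here $\{0\}$, so $P_k=I$) replaces $S$ by a different, $\bar\theta$-dependent matrix, so it proves a different proposition, not (iii). The paper's proof has the same hole: its step bound $t\le\min_k\alpha_k/\max(|\beta_k|,\delta)$ forces $t=0$ as soon as some $\alpha_k=0<\beta_k$.
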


The rank of the excitation matrix satisfies $\operatorname{rank}(S) \leq \sum_{k=1}^K \operatorname{rank}(P_k A(z^k)) \leq K(p-1)$ (each summand has rank at most $\min(n-1, p) \leq p-1$ under Assumption~\ref{assump:unique-normal}).  Hence $S$ is necessarily singular when $K(p-1) < p$, i.e., $K < p/(p-1)$.  For $p \geq 2$ this gives $K < 2$, confirming the intuition that a single observation is generically insufficient for identifiability in multi-parameter models.  More generally, for $S \succ 0$ one typically needs $K \geq p$ observations with sufficient geometric diversity (Corollary~\ref{cor:sufficient-S-pd}).
The condition $\dim(\operatorname{range}(A(z^{k_0})) \cap N_\Omega(z^{k_0})) \geq 2$ is automatically satisfied in the following cases:
(a)~{Linear programs:} $A(z) = I_n$ for all~$z$, so $\operatorname{range}(A) = \mathbb{R}^n$ and the condition reduces to $\dim(N_\Omega(z^{k_0})) \geq 2$.
(b)~{Overparameterized models ($p \geq n$):} $A(z^{k_0})$ maps $\mathbb{R}^p$ onto $\mathbb{R}^n$, so again $\operatorname{range}(A) = \mathbb{R}^n$.
(c)~{Generic position with $p + \dim(N_\Omega(z^{k_0})) > n + 1$:} a $p$-dimensional subspace generically intersects a $d$-dimensional cone in dimension $\max(p+d-n,0)$, which is $\geq 2$ when $p + d \geq n + 2$.

\begin{theorem}[Global Non-Identifiability in Convex Inverse Optimization]
\label{thm:nonidentifiability-convex}
If any of the conditions in Proposition~\ref{prop:convex-multiple-rays} hold, then both $\mathcal{ICO}$ and $\mathcal{R}$-$\mathcal{ICO}$ admit multiple feasible parameters $\theta$, and the inverse problem is globally non-identifiable.
\end{theorem}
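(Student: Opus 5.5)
The plan is to reduce the theorem to Proposition~\ref{prop:convex-multiple-rays} plus two bookkeeping steps. First, if $\mathcal{S}$ contains two non-collinear rays, then $\mathcal{S}\cap\Theta$ contains at least two points. Second, any $\theta\in\mathcal{S}\cap\Theta$, paired with the fixed imputed optima $\{z^k\}$ and suitable multipliers, is feasible for $\mathcal{ICO}$ with the same objective value. Third, feasibility for $\mathcal{ICO}$ implies feasibility for $\mathcal{R}$-$\mathcal{ICO}$ for every $\epsilon\ge 0$.

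\emph{Step 1 (cone to normalized points).} Take non-collinear $\theta^{(1)},\theta^{(2)}\in\mathcal{S}$; by Proposition~\ref{prop:feasibility-set-structure}, $\mathcal{S}$ is a convex cone. For normalizations of the form $\|\theta\|_q=1$, I rescale each $\theta^{(i)}$ to unit norm; non-collinearity guarantees the two rescaled points are distinct. For the full set $\Theta$ of Assumption~\ref{assump:normalization}, I also need $\theta\ge 0$ and $\theta_{j_0}\ge\alpha$, and $\mathcal{S}$ need not respect these. My approach is to start from a reference $\bar\theta\in\mathcal{S}\cap\Theta$ (the true or estimated parameter, assumed to exist since the dataset is rationalizable). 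Condition (i) supplies $\bar\theta$ and the kernel direction $\theta_0$; condition (ii) supplies a second direction from the $\geq 2$-dimensional set $\{\theta: A(z^{k_0})\theta\in -N_\Omega(z^{k_0})\}$, which is a cone of dimension $\geq2$ because $A(z^{k_0})$ is injective; condition (iii) supplies $\Delta\theta\in\ker(S)$. I then perturb $\bar\theta+t\,d$ for small $t$ along the extra direction $d\in\mathcal{S}$ (or $d\in\operatorname{span}$ of the relevant face), and renormalize.

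\emph{Main obstacle.} The hardest part is showing that the perturbed point stays in $\mathcal{S}\cap\Theta$, and this splits into two issues. The first is that $\bar\theta$ may lie on the boundary $\theta_j=0$ or $\theta_{j_0}=\alpha$. In that case I would choose $t$ of the sign that keeps these inequalities, or use the convex combination $(1-t)\bar\theta+t\,\theta^{(2)}/\|\theta^{(2)}\|$ when $\theta^{(2)}$ itself is in $\Theta$, which preserves nonnegativity and the lower bound. The second issue arises in case (iii): I must show that $\ker(S)$ directions keep $\mathcal{S}$ membership. I would argue that $P_kA(z^k)\Delta\theta=0$ for every $k$ (since $S\succeq0$ and $\Delta\theta^\top S\Delta\theta=\sum_k\|P_kA(z^k)\Delta\theta\|^2=0$), so $A(z^k)\Delta\theta$ lies in the normal direction at $z^k$. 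Under the unique-normal assumption referenced after the proposition, this normal direction is a ray, so for small $|t|$ of an appropriate sign $A(z^k)(\bar\theta+t\Delta\theta)$ remains in $-N_\Omega(z^k)$ whenever $\bar\theta$ maps to the relative interior of that ray. I expect this sign and interior argument to need the nondegeneracy hypothesis that $\bar\theta$ is not parallel to $\Delta\theta$.

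\emph{Step 2 (lifting to $\mathcal{ICO}$).} For each feasible $\theta$, I take $\lambda^k\in\mathbb{R}^m_+$ supported on $I(z^k)$ with $A(z^k)\theta+\sum_i\lambda^k_i\nabla g_i(z^k)=0$. Such multipliers exist because $-A(z^k)\theta\in N_\Omega(z^k)=\operatorname{cone}\{\nabla g_i(z^k):i\in I(z^k)\}$ under Slater (Assumption~\ref{assump:slater}). Complementarity then holds by construction, and the objective $\sum_k\|x^k-z^k\|^2$ is unchanged. Hence if the $z^k$ are optimal imputations for one parameter, they are optimal for both, which gives multiple optimal parameters for $\mathcal{ICO}$. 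For $\mathcal{R}$-$\mathcal{ICO}$, the exact KKT point satisfies both relaxed constraints with slack zero, for every $\epsilon\ge0$. This mirrors Corollary~\ref{thm:nonidentifiability-rico-lp}, so global non-identifiability follows.
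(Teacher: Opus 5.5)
\textbf{Verdict: the sphere-normalization proof is correct, but your extension to the full $\Theta$ of Assumption~\ref{assump:normalization} is a genuine gap, and it cannot be closed without a new hypothesis.}

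For the sphere normalization $\|\theta\|_q=1$, your Steps~1--2 and the $\mathcal{R}$-$\mathcal{ICO}$ remark are essentially the paper's proof, which uses $\|\theta\|_2=1$. You take two non-collinear elements of $\mathcal{S}$ from Proposition~\ref{prop:convex-multiple-rays}, rescale them, lift them with multipliers supported on $I(z^k)$, and note zero residual in the relaxed constraints. That part is fine.

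The extension tries to also enforce nonnegativity and $\theta_{j_0}\ge\alpha$. Here is why it fails.
\begin{itemize}
\item $\mathcal{S}$ is only a cone, so $\bar\theta+t\,d$ with $d\in\mathcal{S}$ is guaranteed to stay in $\mathcal{S}$ only for $t\ge 0$. You therefore cannot choose the sign of $t$ to protect coordinates where $\bar\theta_j=0$.
\item Even a two-sided direction such as $\theta_0$ in case~(i) does not rescue this. Two zero coordinates of $\bar\theta$ on which $\theta_0$ has opposite signs block both signs.
\item The convex-combination fallback assumes $\theta^{(2)}/\|\theta^{(2)}\|\in\Theta$, which is exactly the second point you are trying to produce.
\item In case~(ii) with $K>1$, the direction you extract satisfies only the $k_0$-th inclusion, not membership in $\mathcal{S}$.
\end{itemize}

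In fact the conclusion is false for the full $\Theta$. Take $n=p=2$, $\phi_j(x)=x_j$, $g_1(x)=-x_1$, $g_2(x)=x_2-x_1$, $K=1$, $z^1=0$, $j_0=1$.
\begin{itemize}
\item Both constraints are active and $-N_\Omega(z^1)=\operatorname{cone}\{(1,0),(1,-1)\}$. So condition~(ii) holds and $\mathcal{S}$ is two-dimensional.
\item Yet $\mathcal{S}\cap\mathbb{R}^2_+=\operatorname{cone}\{e_1\}$, hence $\mathcal{S}\cap\Theta=\{e_1\}$.
\item Perturbing $e_1$ along $(1,-1)$ fails for either sign of $t$. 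For $t>0$ the second coordinate becomes negative; for $t<0$ the point $(1+t,-t)$ leaves $-N_\Omega(z^1)$.
\end{itemize}

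So you have two options. Either restrict the statement to sphere-type normalizations, as the paper does, or add an explicit hypothesis. A suitable one is that $\mathcal{S}\cap\mathbb{R}^p_+$ has dimension at least $2$ and contains some $u$ with $u_{j_0}>\alpha\|u\|_q$. Then take any $w\in\mathcal{S}\cap\mathbb{R}^p_+$ not collinear with $u$: for small $t>0$, $u+t\,w$ normalizes to a second, distinct point of $\mathcal{S}\cap\Theta$.
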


\begin{remark}[Sources of Ambiguity in Convex Models]
\label{rem:ambiguity-sources}
Equation~\eqref{eq:kkt-general} reveals two distinct sources of parameter ambiguity:
(i) {Normal cone dimensionality:} If $N_\Omega(z^k)$ is high-dimensional (e.g., when $z^k$ lies at a vertex with many active constraints), multiple objective gradient directions $-A(z^k)\theta$ can support the same optimum.
(ii) {Basis function degeneracy:} If $A(z^k)$ is rank-deficient, then distinct parameter vectors $\theta$ can map to the same gradient $\nabla_x f(z^k, \theta)$.
Uniqueness of $\theta$ requires both that the data sufficiently excite the basis directions (making the stacked matrix $[A(z^1)^\top \cdots A(z^K)^\top]^\top$ have full column rank) \emph{and} that the intersection of normal cones collapse to a single ray. Absent these strong conditions---which are generically violated---the inverse problem remains set-valued.
\end{remark}

In $\mathcal{R}$-$\mathcal{ICO}$, the exact stationarity condition~\eqref{eq:kkt-general} is relaxed to $\operatorname{dist}(A(z^k)\theta, -N_\Omega(z^k)) \le \epsilon$, which only enlarges the feasible set. Any non-uniqueness present in $\mathcal{ICO}$ persists under regularization, and additional ambiguity may be introduced unless further structure, such as strong convexity, full-rank conditions, or explicit priors, is imposed. This motivates the alternative approaches developed in subsequent sections.

\subsection{Sufficient Conditions for Parameter Identifiability}
\label{subsec:identifiability}

The non-identifiability results of Theorems~\ref{thm:nonidentifiability-lp}--\ref{thm:nonidentifiability-convex} establish that parameter recovery is generically set-valued. We now provide sufficient conditions under which the inverse parameter becomes unique (up to normalization). These conditions directly address the two sources of ambiguity identified in Remark~\ref{rem:ambiguity-sources}.

\begin{assumption}[Strong Convexity in Decision Variable]\label{assump:strongcx}
For every $\theta \in \Theta$, the function $x \mapsto f(x,\theta) = \sum_{j=1}^p \theta_j \phi_j(x)$ is $\mu$-strongly convex on $\Omega$ for some $\mu > 0$ independent of $\theta$.
\end{assumption}

Strong convexity ensures that the forward problem $\mathcal{FO}(\theta, \Omega)$ admits a unique optimal solution $x^*(\theta)$ for each $\theta$. This eliminates ambiguity in the forward direction but does not, by itself, guarantee identifiability of the inverse problem.

\begin{assumption}[One-Dimensional Normal Cones]\label{assump:unique-normal}
For each imputed optimum $z^k$, the normal cone is one-dimensional:
$N_\Omega(z^k) = \operatorname{cone}\{n^k\}$, with $\|n^k\|_2 = 1$.
\end{assumption}

Assumption~\ref{assump:unique-normal} directly addresses the first source of non-identifiability from Proposition~\ref{prop:multiple-rays-conditions}(ii): it rules out high-dimensional normal cones by requiring each $z^k$ to lie on a smooth portion of the boundary $\partial\Omega$ where exactly one constraint is active, or more generally, where the active constraint normals are collinear.

\begin{assumption}[Orthogonal Persistent Excitation]\label{assump:orth-excite}
Let $A(z) := [\nabla\phi_1(z) \cdots \nabla\phi_p(z)] \in \mathbb{R}^{n \times p}$ denote the gradient matrix and $P_k := I_n - n^k (n^k)^\top$ the orthogonal projection onto the hyperplane perpendicular to $n^k$. The following excitation matrix is positive definite, $S \succ 0$:
\[
S := \sum_{k=1}^K A(z^k)^\top P_k A(z^k) \in \mathbb{R}^{p \times p}.
\]
\end{assumption}

Assumption~\ref{assump:orth-excite} addresses the second source of non-identifiability from Proposition~\ref{prop:convex-multiple-rays}(i): it ensures that no nonzero parameter perturbation $\Delta\theta$ can produce gradient changes lying entirely along the normal directions $\{n^k\}$. The projection $P_k$ extracts the component of $A(z^k)\Delta\theta$ orthogonal to $n^k$, and positive definiteness of $S$ ensures this orthogonal component is nonzero for any $\Delta\theta \neq 0$.

\begin{remark}%[Interpretation of Orthogonal Excitation]
\label{rem:orth-excite-interpretation}
The matrix $S$ measures how well the observations $\{z^k\}$ excite the parameter space in directions distinguishable from rescaling the dual multipliers. Consider the KKT condition $A(z^k)\theta = -\alpha_k n^k$ for some $\alpha_k \geq 0$. A perturbation $\theta \to \theta + \Delta\theta$ can be absorbed by adjusting $\alpha_k \to \alpha_k + \Delta\alpha_k$ if and only if $A(z^k)\Delta\theta$ is parallel to $n^k$. The condition $S \succ 0$ ensures that across all observations, no such hidden perturbation direction exists.
\end{remark}

\begin{theorem}[Parameter Identifiability Under Orthogonal Excitation]
\label{thm:identifiability}
Under Assumptions~\ref{assump:strongcx}--\ref{assump:orth-excite}, and a normalization convention $\theta \in \Theta$ (e.g., $\|\theta\|_2 = 1$ with a sign convention, or $\mathbf{1}^\top\theta = 1$ with $\theta \geq 0$), the parameter $\theta$ solving $\mathcal{ICO}$ is unique.
\end{theorem}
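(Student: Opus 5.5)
The plan is to work with the imputed optima $\{z^k\}_{k=1}^K$ fixed, namely those referenced in Assumptions~\ref{assump:unique-normal}--\ref{assump:orth-excite}. I would first show that every feasible parameter of $\mathcal{ICO}$ lies in a common linear subspace determined by $S$, and then let the normalization pick out a single point. Assumption~\ref{assump:strongcx} plays only a supporting role. It guarantees that each $\theta\in\Theta$ has a unique forward optimum $x^*(\theta)$, so the imputed $z^k$ associated with a feasible $\theta$ are well defined and no ambiguity enters from the forward side. The argument then reduces to the stationarity condition \eqref{eq:kkt-general} at the fixed points $z^k$.

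\textbf{Step 1 (linearization of the KKT set).} By Assumption~\ref{assump:unique-normal}, condition \eqref{eq:kkt-general} reads $A(z^k)\theta=-\alpha_k n^k$ with $\alpha_k\ge 0$. Applying $P_k$ gives $P_kA(z^k)\theta=0$ for every $k$. Hence every feasible $\theta$ satisfies $\theta^\top S\theta=\sum_k\|P_kA(z^k)\theta\|_2^2=0$, which means $\mathcal{S}\subseteq\ker S$.

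\textbf{Step 2 (difference argument).} Take two feasible parameters $\theta,\theta'\in\mathcal{S}\cap\Theta$ and set $\Delta\theta=\theta-\theta'$. By linearity, $A(z^k)\Delta\theta=-(\alpha_k-\alpha_k')n^k$, so $A(z^k)\Delta\theta$ is parallel to $n^k$. This is exactly the ``absorbable perturbation'' of Remark~\ref{rem:orth-excite-interpretation}. It follows that $\Delta\theta^\top S\Delta\theta=0$, and positive definiteness then forces $\Delta\theta=0$.

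\textbf{Main obstacle.} Step 1 exposes a tension. Any feasible $\theta\neq 0$ is itself a null direction of $S$, because the rescaling $\theta\mapsto c\theta$ is always absorbable. Literal $S\succ 0$ on all of $\mathbb{R}^p$ is therefore incompatible with $\mathcal{S}\neq\{0\}$, so the assumption must be read modulo scale. My plan is to make this precise in two parts.

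\emph{Reading of the assumption.} I would interpret Assumption~\ref{assump:orth-excite} as positive definiteness of $S$ on a complement of the scaling direction. Equivalently, $\ker S=\operatorname{span}\{\theta\}$ for some (any) feasible $\theta$. This is the natural content of the assumption, and it coincides with $S\succ0$ after one restricts to the tangent space of the normalization.

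\emph{Conclusion under this reading.} Step 2 then yields $\Delta\theta\in\ker S$, so $\theta'=c\,\theta$ for some scalar $c$. Since $\theta'\in\mathcal{S}$ and $\theta'\neq 0$, the multipliers $\alpha_k'=c\,\alpha_k$ must be nonnegative. This forces $c>0$ whenever some $\alpha_k>0$, which I would argue holds because the basis function $\phi_{j_0}$ is non-constant and the weight $\theta_{j_0}\ge\alpha>0$; otherwise the sign convention handles the remaining case. Finally, each normalization set in the statement meets an open ray in at most one point: $\|\theta\|_2=1$ with a sign convention fixes $c=1$, and so does $\mathbf{1}^\top\theta=1$ with $\theta\ge0$. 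Hence $\theta=\theta'$.

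Making the modulo-scale reading of $S\succ0$ rigorous, and checking that it matches the paper's intended assumption, is the step I expect to require the most care.
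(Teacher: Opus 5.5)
Your Step~2 is exactly the paper's proof: subtract the two KKT systems at the fixed imputed optima, apply $P_k$, left-multiply by $A(z^k)^\top$ and sum to get $S\Delta\theta=0$, then invoke $S\succ 0$.

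Your Step~1 is correct, and the paper's proof misses it. Every feasible $\theta$ satisfies $P_kA(z^k)\theta=-\alpha_kP_kn^k=0$. Hence $\theta^\top S\theta=0$ and $\theta\in\ker S$, so literal $S\succ0$ forces $\theta=0\notin\Theta$. The theorem's hypotheses therefore can never hold at a feasible point, and the paper's argument is valid only vacuously. Its closing remark that normalization ``removes any residual scaling'' refers to an ambiguity that literal $S\succ0$ leaves no room for.

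Your reading $\ker S=\operatorname{span}\{\theta\}$ is the right repair. Because $S\succeq0$ and $\theta\in\ker S$, it is indeed equivalent to $S$ being positive definite on any complement of $\operatorname{span}\{\theta\}$, such as $\theta^\perp$ or $\mathbf{1}^\perp$. Under this reading Step~1 alone already gives $\theta'\in\operatorname{span}\{\theta\}$, so Step~2 becomes redundant.

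One correction concerns your justification of $c>0$. Non-constancy of $\phi_{j_0}$ does not imply that some $\alpha_k>0$. If the unconstrained minimizer of $f(\cdot,\theta)$ lies on $\partial\Omega$ at a point with a single active constraint, then $A(z^k)\theta=0$ and $\alpha_k=0$. You do not need this step, however. Since $\Theta\subseteq\mathbb{R}^p_+$ with $\theta_{j_0},\theta'_{j_0}\ge\alpha>0$, you get $c=\theta'_{j_0}/\theta_{j_0}>0$ directly, and under $\mathbf{1}^\top\theta=1$ you get $c=1$ outright.
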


\begin{proposition}[Necessity of Orthogonal Excitation for Identifiability]
\label{prop:necessity-orth-excite}
Under Assumptions~\ref{assump:strongcx}--\ref{assump:unique-normal}, if $S = \sum_{k=1}^K A(z^k)^\top P_k A(z^k)$ is singular, then there exist distinct parameters $\theta, \theta' \in \mathbb{R}^p$ (prior to normalization) that both satisfy the KKT conditions at all imputed optima $\{z^k\}_{k=1}^K$. Consequently, $\mathcal{ICO}$ admits multiple solutions, and identifiability fails.
\end{proposition}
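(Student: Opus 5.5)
Since $S$ is singular, I will take $\Delta\theta \neq 0$ in $\ker(S)$ and use it to perturb a parameter that is already known to satisfy the KKT conditions. The argument rests on the fact that $S$ is a sum of positive semidefinite terms $A(z^k)^\top P_k A(z^k) = (P_k A(z^k))^\top (P_k A(z^k))$, using $P_k^2 = P_k = P_k^\top$.

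\textbf{Step 1: $A(z^k)\Delta\theta$ is parallel to $n^k$ for every $k$.} From $\Delta\theta^\top S \Delta\theta = \sum_k \|P_k A(z^k)\Delta\theta\|_2^2 = 0$, each term must vanish, so $P_k A(z^k)\Delta\theta = 0$. Hence $A(z^k)\Delta\theta = \beta_k n^k$ for some scalar $\beta_k \in \mathbb{R}$.

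\textbf{Step 2: fix a base parameter.} Let $\theta$ be the parameter from a feasible solution of $\mathcal{ICO}$. Under Assumption~\ref{assump:unique-normal}, stationarity \eqref{eq:kkt-general} reads $A(z^k)\theta = -\alpha_k n^k$ with $\alpha_k \ge 0$. Here I use that $N_\Omega(z^k) = \operatorname{cone}\{n^k\}$ and that $\sum_i \lambda_i^k \nabla g_i(z^k)$ ranges over exactly the normal cone.

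\textbf{Step 3: construct the second parameter.} Set $\theta' = \theta + t\Delta\theta$. Then
\[
A(z^k)\theta' = -(\alpha_k - t\beta_k)\, n^k .
\]
Stationarity at $z^k$ holds whenever $\alpha_k - t\beta_k \ge 0$ for all $k$, which gives an interval of admissible $t$ containing $0$.

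\textbf{Step 4 (main obstacle): the interval must be nondegenerate.} This fails only if some $\alpha_k = 0$ with $\beta_k \ne 0$, and the signs of such $\beta_k$ force opposite constraints on $t$. I would handle this in two cases.
\begin{itemize}
\item If $\alpha_k > 0$ for every $k$, any small $|t|$ works.
\item If some $\alpha_k = 0$, then $A(z^k)\theta = 0$ at that $k$. I then restrict $t$ to the sign making $-t\beta_k \ge 0$ for those $k$. A conflict arises only when two such indices have $\beta$'s of opposite sign. In that case I fall back on the statement's wording "prior to normalization". Both $\theta$ and $-\theta$ remain unconstrained in sign before the cone restriction, so one can allow $\alpha_k$ of either sign, i.e. read stationarity as $A(z^k)\theta \in \operatorname{span}\{n^k\}$, the linearized KKT system. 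Every $\theta + t\Delta\theta$ with $t \in \mathbb{R}$ then qualifies.
\end{itemize}
I expect this sign and complementarity bookkeeping to be the delicate point. The intended reading of the proposition is that $\Delta\theta$ can be absorbed into the multipliers $\alpha_k$, exactly as in Remark~\ref{rem:orth-excite-interpretation}.

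\textbf{Step 5: the two parameters are genuinely distinct and feasible.} Since $\Delta\theta \ne 0$ and $t \ne 0$, we have $\theta' \ne \theta$. To rule out that they coincide after normalization, note that if $\Delta\theta$ is parallel to $\theta$, then $\theta'$ is a rescaling of $\theta$. In that situation I instead use any other kernel direction, or appeal to Proposition~\ref{prop:convex-multiple-rays}(iii), which assumes a $\bar\theta \in \mathcal{S}$ not parallel to the kernel. Finally, complementarity and primal feasibility are unchanged because the $z^k$ are fixed, and by Slater's condition (Assumption~\ref{assump:slater}) the KKT conditions certify optimality. So $(\theta', \{z^k\})$ is feasible for $\mathcal{ICO}$ with the same objective value, and identifiability fails.
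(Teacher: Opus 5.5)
Your Steps 1--3 are the paper's proof essentially verbatim: take a kernel vector of $S$, deduce $A(z^k)\Delta\theta=\beta_k n^k$, and absorb $t\Delta\theta$ into the multipliers. The paper dismisses the degenerate case with ``$\alpha_k>0$ generically''. Its bound $t\le\min_k\alpha_k/\max(|\beta_k|,1)$ actually forces $t\le 0$ once some $\alpha_k=0$, so you are right that Step 4 needs an argument. The one you give is not valid, however.

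``Prior to normalization'' only waives $\theta\in\Theta$; it does not waive $\alpha_k\ge 0$, which is dual feasibility. If $\alpha_k-t\beta_k<0$, then $-\nabla_x f(z^k,\theta')$ is a positive multiple of $-n^k\notin N_\Omega(z^k)$. So $z^k$ is not a minimizer, and your Step 5 appeal to KKT sufficiency collapses.

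The repair uses a hypothesis you never invoke. Since $\theta\in\Theta$, Assumption~\ref{assump:strongcx} makes the minimizer of $\mathcal{FO}(\theta,\Omega)$ unique, so $z^1=\cdots=z^K$. Then $n^k$, $\alpha_k=\alpha$ and $\beta_k=\beta$ do not depend on $k$, and a nonzero $t$ with $\alpha-t\beta\ge 0$ always exists: any small $|t|$ if $\alpha>0$, and any $t$ with $t\beta\le 0$ if $\alpha=0$. Sign conflicts between indices cannot occur.

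The more serious gap is in Step 5, and it is not an edge case. Every parameter satisfying the KKT conditions at the given $z^k$ lies in $\ker S$, because
\[
P_kA(z^k)\theta=-\alpha_k P_k n^k=0 \quad\text{for all } k .
\]
So $S$ is automatically singular as soon as a nonzero feasible $\theta$ exists, and the kernel vector it provides may just be $\theta$ itself, giving a mere rescaling.

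If $\ker S=\operatorname{span}\{\theta\}$, there is no ``other kernel direction'' to switch to. Proposition~\ref{prop:convex-multiple-rays}(iii) cannot be invoked either: its hypothesis requires some $\bar\theta\in\mathcal{S}$ not parallel to any nonzero kernel vector, but $\bar\theta$ is itself such a vector. In that situation every KKT-compatible parameter is a multiple of $\theta$, so the normalized parameter in $\Theta$ is unique even though $S$ is singular.

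Hence Steps 1--3 (and the paper's proof) establish only the unnormalized claim, which is trivial anyway ($\theta$ and $2\theta$). The conclusion that $\mathcal{ICO}$ has multiple solutions needs an extra hypothesis such as $\dim\ker S\ge 2$. Then you can take $\Delta\theta\in\ker S\setminus\operatorname{span}\{\theta\}$, so that $\theta+t\Delta\theta$ is not proportional to $\theta$. You would also need to check that the normalized $\theta'$ stays in $\Theta$: nonnegativity and the lower bound on $\theta_{j_0}$ from Assumption~\ref{assump:normalization} can fail when $\theta$ lies on the boundary of $\mathbb{R}^p_+$. The paper's argument has the same hole; you sensed it but did not close it.
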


The null space of $S$ characterizes the unidentifiable directions in parameter space: perturbations $\Delta\theta \in \ker(S)$ produce gradient changes $A(z^k)\Delta\theta$ that are entirely absorbed by adjusting the dual multipliers $\alpha_k$. These directions are invisible to the inverse problem and represent fundamental limitations of the available data.

An alternative identifiability approach requires the stacked gradient matrix $\bar{A} := [A(z^1)^\top \cdots A(z^K)^\top]^\top \in \mathbb{R}^{Kn \times p}$ to have full column rank. This condition is neither necessary nor sufficient for identifiability:
(i) \emph{Not necessary:} Even if $\bar{A}$ is rank-deficient, $S$ may still be positive definite if the rank deficiency occurs along normal directions that are projected away.
(ii) \emph{Not sufficient:} Even if $\bar{A}$ has full rank, parameter perturbations along normal directions can remain unidentifiable if $\dim(N_\Omega(z^k)) > 1$, violating Assumption~\ref{assump:unique-normal}.
The orthogonal excitation condition precisely captures the identifiable information content of the data.

\begin{remark}%[Role of Matrix $M$]
\label{rem:matrix-M}
The matrix $M := \sum_{k=1}^K A(z^k)^\top n^k (n^k)^\top A(z^k)$ captures alignment of gradient directions \emph{along} the normals $n^k$. Note the decomposition $S + M = \sum_k A(z^k)^\top A(z^k)$, the standard Gramian. While $M$ measures how well the data determine the dual multipliers $\alpha_k$, it is $S$ that governs parameter identifiability: $M \succ 0$ alone does not imply identifiability; $S \succ 0$ is both necessary and sufficient (under Assumptions~\ref{assump:strongcx}--\ref{assump:unique-normal}); and the decomposition separates the Gramian into identifiable ($S$) and non-identifiable ($M$) components.
\end{remark}

The positive definiteness of $S$ may be difficult to verify a priori since it depends on the (unknown) imputed optima $\{z^k\}$. We provide simpler sufficient conditions.

\begin{corollary}[Sufficient Conditions for Positive Definite $S$]
\label{cor:sufficient-S-pd}
Assumption~\ref{assump:orth-excite} ($S \succ 0$) is satisfied if either of the following holds:
\begin{enumerate}[(i)]
    \item %{(Diverse normals)} 
    The normal directions $\{n^k\}_{k=1}^K$ span $\mathbb{R}^n$, and for each $k$, $A(z^k)$ has full column rank $p$.
    \item %{(Interior observations)} 
    At least one observation $z^{k_0}$ lies in the interior of $\Omega$ (so $N_\Omega(z^{k_0}) = \{0\}$ and $P_{k_0} = I$), and $A(z^{k_0})$ has full column rank.
\end{enumerate}
\end{corollary}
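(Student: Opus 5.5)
The plan is to work with the quadratic form of $S$. Each summand $A(z^k)^\top P_k A(z^k)$ is positive semidefinite, because $P_k$ is an orthogonal projection ($P_k^\top = P_k = P_k^2$). Hence
\[
\Delta\theta^\top S\,\Delta\theta \;=\; \sum_{k=1}^K \bigl\|P_k A(z^k)\Delta\theta\bigr\|_2^2 .
\]
So $S \succ 0$ holds if and only if every $\Delta\theta \neq 0$ has at least one index $k$ with $P_k A(z^k)\Delta\theta \neq 0$. Equivalently, no nonzero $\Delta\theta$ makes every vector $A(z^k)\Delta\theta$ parallel to $n^k$. This matches the interpretation in Remark~\ref{rem:orth-excite-interpretation}, and I will verify it separately under each condition.

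\emph{Condition (ii).} Here I take $P_{k_0}=I_n$ as stated; for an interior point, Assumption~\ref{assump:unique-normal} is read with $n^{k_0}=0$. Then the $k_0$ summand equals $A(z^{k_0})^\top A(z^{k_0})$, which is positive definite because $A(z^{k_0})$ has full column rank. The remaining summands are positive semidefinite. A positive definite matrix plus positive semidefinite matrices is positive definite, so $S \succ 0$ immediately.

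\emph{Condition (i).} Suppose, for contradiction, that $\Delta\theta \neq 0$ satisfies $\Delta\theta^\top S\,\Delta\theta = 0$. Then $P_kA(z^k)\Delta\theta = 0$ for every $k$, so $A(z^k)\Delta\theta = c_k n^k$ for scalars $c_k$. Full column rank of each $A(z^k)$ gives $c_k \neq 0$ for all $k$. When $n \ge 2$, the spanning hypothesis supplies two indices $k_1,k_2$ with non-collinear normals $n^{k_1}, n^{k_2}$.
\begin{itemize}
\item If the gradient matrix does not depend on $z$, which is the linear-program case $A\equiv I_n$ and more generally any affine basis, then $A\Delta\theta$ is a single vector parallel to both $n^{k_1}$ and $n^{k_2}$. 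It must therefore be $0$, contradicting full column rank.
\end{itemize}

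\emph{Main obstacle: $A(z^k)$ varying with $k$.} In that case the vectors $A(z^{k_1})\Delta\theta$ and $A(z^{k_2})\Delta\theta$ are different, and spanning normals alone do not obviously exclude the alignment. A small $2\times 2$ example with $A(z^1)=I$, $A(z^2)$ a rotation, and normals related by the same rotation appears to satisfy hypothesis (i) while $S$ is singular. My plan is first to try to derive a contradiction from the collection of one-dimensional constraints $\Delta\theta \in A(z^k)^{-1}(\operatorname{span}\{n^k\})$, intersected over $k$. If the example above is genuine, I would then state and prove (i) under the additional requirement that these preimage lines have trivial common intersection, that is,
\[
\bigcap_{k=1}^K \bigl\{\Delta\theta : A(z^k)\Delta\theta \in \operatorname{span}\{n^k\}\bigr\} = \{0\}.
\]
This requirement holds automatically in the constant-$A$ case through the spanning-normals argument above. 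Under it, the contradiction argument for condition (i) goes through verbatim.
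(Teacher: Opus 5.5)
Your argument for (ii), including the identity $\Delta\theta^\top S\,\Delta\theta=\sum_k\|P_kA(z^k)\Delta\theta\|_2^2$, matches the paper's proof, which simply notes $S\succeq A(z^{k_0})^\top A(z^{k_0})\succ 0$.

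For (i), the paper's argument is exactly your constant-$A$ bullet. From $A(z^k)\Delta\theta\neq 0$ it concludes that ``no single nonzero vector $w$ can be parallel to all $n^k$.'' That step silently treats $A(z^k)\Delta\theta$ as one vector $w$ independent of $k$. So the obstacle you flag is real, and the paper does not address it.

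Your example is a genuine counterexample to (i), not just an apparent one. Take $A(z^1)=I$, $A(z^2)=\begin{pmatrix}0&-1\\1&0\end{pmatrix}$, $n^1=e_1$, $n^2=e_2$. Then $S=2e_2e_2^\top$, so $Se_1=0$. It can even be realized under Assumptions~\ref{assump:basis}--\ref{assump:slater}:
\begin{itemize}
\item basis functions $\phi_j(x)=x^\top Q_jx+e_j^\top x$ with $Q_1=\tfrac16\begin{pmatrix}1&-1\\-1&1\end{pmatrix}$ and $Q_2=\tfrac12\begin{pmatrix}1&1\\1&1\end{pmatrix}$;
\item feasible set $\Omega=\{x\in\mathbb{R}^2: x_1\le 0,\ x_2\le 1\}$;
\item points $z^1=(0,0)$ and $z^2=(-2,1)$.
\end{itemize}
You should state this outright rather than hedge. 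Note also that (i) fails for $n=1$ even with constant $A$, because $P_k=0$ at every boundary point. Your restriction $n\ge 2$ therefore belongs in the statement, not only in the proof.

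Where your proposal falls short is the repair. The displayed requirement is not an extra hypothesis under which (i) becomes true. By the identity above, $\bigcap_k\{\Delta\theta: A(z^k)\Delta\theta\in\operatorname{span}\{n^k\}\}$ is exactly $\ker S$. Your amended (i) thus reads ``$S\succ 0$ whenever $\ker S=\{0\}$,'' and neither the spanning hypothesis nor the rank hypothesis does any work.

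A non-circular correction uses full column rank to make each preimage explicit. The preimage of $\operatorname{span}\{n^k\}$ under $A(z^k)$ is $\{0\}$ if $n^k\notin\operatorname{range}A(z^k)$, and a line otherwise. This gives three usable conditions:
\begin{itemize}
\item for $p<n$, a single $k$ with $n^k\notin\operatorname{range}A(z^k)$ already gives $S\succ 0$;
\item for $p=n$, $S\succ 0$ if and only if the pulled-back normals $A(z^k)^{-1}n^k$ are not all collinear (in your example both equal $e_1$);
\item for constant $A$ with $n\ge 2$, two non-collinear normals suffice, so even there the spanning hypothesis is stronger than needed.
\end{itemize}
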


Corollary~\ref{cor:sufficient-S-pd}(ii) indicates that identifiability is easiest to achieve when some observations correspond to interior optima. In constrained settings, condition (i) requires geometric diversity in the active constraints across observations, which is a form of experimental design for inverse optimization. When these conditions fail, the practitioner should expect set-valued parameter recovery and may employ the characterization methods developed in subsequent sections.

%===========================================================
%===========================================================

\section{Inverse Learning: A Scalable Framework for Data-Rich Inverse Optimization}
\label{sec:IL}

The classical formulations $\mathcal{ICO}$ and $\mathcal{R}$-$\mathcal{ICO}$ face two fundamental challenges in data-rich settings:
(i) {Computational intractability:} $\mathcal{ICO}$ requires $O(Kn + Km)$ variables and $O(Km)$ constraints. For modern applications with thousands of data points, this scaling renders classical formulations impractical.
(ii) {Inherent non-identifiability:} Theorems~\ref{thm:nonidentifiability-lp}--\ref{thm:nonidentifiability-convex} establish that parameter recovery is generically set-valued. Classical approaches either ignore this multiplicity (returning an arbitrary feasible $\theta$) or require stringent conditions (Theorem~\ref{thm:identifiability}) that are rarely satisfied in practice.

We propose \emph{Inverse Learning} ($\mathcal{IL}$), which addresses both challenges through a fundamental reformulation: rather than imputing $K$ separate optimal solutions $\{z^k\}_{k=1}^K$, $\mathcal{IL}$ learns a \emph{single representative optimal solution} $z^* \in \Omega$ that best fits the observed data. This approach targets settings where observations are noisy measurements of a common latent optimal decision---a natural assumption when a single decision-maker's behavior is observed repeatedly under noise.

\subsection{The Inverse Learning Formulation}

%\begin{definition}%[Inverse Learning Problem]
%\label{def:IL}
Given observations $\mathcal{X} = \{x^k\}_{k=1}^K$ and feasible set $\Omega$, the Inverse Learning problem seeks a single optimal solution $z^*$ such that:
(1) $z^*$ minimizes the aggregate distance to observations;
(2) $z^*$ is optimal for $\mathcal{FO}(\theta, \Omega)$ for some $\theta \in \Theta$; and
(3) the set of all compatible parameters $\Theta^*(z^*)$ is characterized.
%\end{definition} 
%
The $\mathcal{IL}$ model can be written as follows:
\begin{subequations}\label{eq:IL}
\begin{align}
\mathcal{IL}(\mathcal{X}, \Omega): \quad \min_{z, \theta, \lambda} \quad & \sum_{k=1}^K \|x^k - z\|_2^2 \label{eq:IL-obj}\\
\text{s.t.} \quad & g_i(z) \leq 0, \quad i = 1, \ldots, m, \label{eq:IL-primal}\\
& \sum_{j=1}^p \theta_j \nabla \phi_j(z) + \sum_{i=1}^m \lambda_i \nabla g_i(z) = 0, \label{eq:IL-stat}\\
& \lambda_i g_i(z) = 0, \quad i = 1, \ldots, m, \label{eq:IL-comp}\\
& \lambda \geq 0, \quad \theta \in \Theta. \label{eq:IL-dual}
\end{align}
\end{subequations}

The objective \eqref{eq:IL-obj} minimizes aggregate squared error. Constraint \eqref{eq:IL-primal} ensures primal feasibility. Constraints \eqref{eq:IL-stat}--\eqref{eq:IL-dual} encode the KKT conditions, ensuring $z$ is optimal for some forward problem with parameters $(\theta, \lambda)$.

\begin{theorem}[Complexity Reduction]\label{thm:IL-complexity}
The problem $\mathcal{IL}(\mathcal{X}, \Omega)$ has $O(n + p + m)$ variables and $O(n + m)$ constraints, \emph{independent of the number of observations $K$}. In contrast, $\mathcal{ICO}(\mathcal{X}, \Omega)$ has $O(Kn + p + Km)$ variables and $O(Kn + Km)$ constraints.
\end{theorem}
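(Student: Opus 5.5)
The argument is a direct count of decision variables and constraints in each formulation. The one subtlety is the objective of $\mathcal{IL}$, which is written as a sum over $K$ terms. Throughout, $n$ is the dimension of $x$, $p$ is the number of basis functions, and $m$ is the number of constraints $g_i$.

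For $\mathcal{IL}(\mathcal{X},\Omega)$ in \eqref{eq:IL}, the decision variables are $z\in\mathbb{R}^n$, $\theta\in\mathbb{R}^p$ and $\lambda\in\mathbb{R}^m$, for a total of $n+p+m$. The constraints are counted block by block:
\begin{itemize}
\item primal feasibility \eqref{eq:IL-primal} contributes $m$ constraints;
\item stationarity \eqref{eq:IL-stat} is an equation in $\mathbb{R}^n$, giving $n$ scalar equalities;
\item complementarity \eqref{eq:IL-comp} contributes $m$ constraints;
\item nonnegativity $\lambda\ge 0$ contributes $m$ bounds;
\item $\theta\in\Theta$ contributes $p$ sign bounds, one norm equality and one lower bound on $\theta_{j_0}$.
\end{itemize}
This gives $n+3m+p+2$ constraints. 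The sign bounds on $\theta$ are variable bounds of the same nature as $\lambda\ge 0$, so they are naturally absorbed into the variable count. The functional constraints therefore number $O(n+m)$, and none of them is indexed by $k$.

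The main point is that $K$ enters only through the objective \eqref{eq:IL-obj}, which is a fixed function of $z$. Expanding the square gives
\[
\sum_{k=1}^K \|x^k - z\|_2^2 = K\|z\|_2^2 - 2\Bigl(\sum_{k=1}^K x^k\Bigr)^\top z + \sum_{k=1}^K \|x^k\|_2^2 .
\]
So after a one-time preprocessing of the data, the objective is a quadratic in $n$ variables whose coefficients are $K$, the vector $\sum_k x^k$ and a constant. The size of the optimization model is thus independent of $K$. This anticipates Proposition~\ref{prop:aggregation}, but it needs only the algebraic identity above.

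For $\mathcal{ICO}(\mathcal{X},\Omega)$ in \eqref{eq:ICO}, each observation $k$ carries its own copies $z^k\in\mathbb{R}^n$ and $\lambda^k\in\mathbb{R}^m$, while $\theta$ is shared. This gives $Kn+Km+p$ variables, which is $O(Kn+p+Km)$. Per observation there are:
\begin{itemize}
\item $m$ feasibility constraints $z^k\in\Omega$;
\item $n$ stationarity equalities;
\item $m$ complementarity constraints;
\item $m$ nonnegativity bounds.
\end{itemize}
Together with the $O(p)$ constraints defining $\Theta$, the total is $K(n+3m)+O(p)$, which is $O(Kn+Km)$ in the statement's notation.

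The only delicate step is the accounting convention. To recover the stated bounds exactly, I would say explicitly that sign and normalization constraints on $\theta$ are treated as variable-domain restrictions. I would also say that the $K$ data terms in the objective are not counted as constraints, which the expansion above justifies.
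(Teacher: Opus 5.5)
Your proposal is correct and uses the same direct count as the paper: the paper tallies the $n+p+m$ variables $(z,\theta,\lambda)$ and the $n+2m$ primal-feasibility, stationarity and complementarity constraints of $\mathcal{IL}$. Your extra steps make explicit what the paper leaves implicit: treating $\lambda\ge 0$ and $\theta\in\Theta$ as domain restrictions (needed to get $O(n+m)$ when $p$ is unbounded), showing that $K$ enters only through the objective, and actually counting $\mathcal{ICO}$, whose size the paper asserts without counting.
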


Theorem~\ref{thm:IL-complexity} establishes \emph{data scalability}: the problem size is independent of $K$ (See Table \ref{tab:complexity} for summary comparison). However, $\mathcal{IL}$ remains nonconvex due to bilinear terms in \eqref{eq:IL-stat} and complementarity constraints \eqref{eq:IL-comp}. The computational advantage lies in the ability to process arbitrarily large datasets through aggregated statistics (Proposition~\ref{prop:aggregation}) and problem size reduction independent of $K$, not in polynomial-time solvability. For the linear case with polyhedral $\Omega$, complementarity can be handled via mixed-integer programming with $O(m)$ binary variables, independent of $K$. 

\begin{proposition}[Data Aggregation Property]\label{prop:aggregation}
The objective function satisfies
\begin{equation}\label{eq:aggregation}
\sum_{k=1}^K \|x^k - z\|_2^2 = K\|z - \bar{x}\|_2^2 + \sum_{k=1}^K \|x^k - \bar{x}\|_2^2,
\end{equation}
where $\bar{x} = \frac{1}{K}\sum_{k=1}^K x^k$ is the sample centroid. Consequently, $\mathcal{IL}(\mathcal{X}, \Omega)$ is equivalent to
\begin{equation}\label{eq:IL-centroid}
\min_{z, \theta, \lambda} \quad K\|z - \bar{x}\|_2^2 \quad \text{s.t.} \quad \eqref{eq:IL-primal}\text{--}\eqref{eq:IL-dual},
\end{equation}
which depends on the data only through the centroid $\bar{x}$.
\end{proposition}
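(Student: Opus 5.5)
The argument has two parts: the bias–variance identity \eqref{eq:aggregation}, and then the observation that the constraint set of $\mathcal{IL}(\mathcal{X},\Omega)$ does not depend on the data, so only the data-dependent part of the objective matters.

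\textbf{Step 1: the identity.} Write each residual as $x^k - z = (x^k - \bar{x}) + (\bar{x} - z)$ and expand the squared norm:
\[
\|x^k - z\|_2^2 = \|x^k - \bar{x}\|_2^2 + 2(x^k - \bar{x})^\top(\bar{x} - z) + \|\bar{x} - z\|_2^2 .
\]
Summing over $k=1,\ldots,K$, the cross term is $2\bigl(\sum_k x^k - K\bar{x}\bigr)^\top(\bar{x}-z)$. This vanishes by the definition of $\bar{x}$. The remaining terms are $K\|z-\bar{x}\|_2^2 + \sum_k \|x^k-\bar{x}\|_2^2$, which gives \eqref{eq:aggregation}.

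\textbf{Step 2: equivalence.} Constraints \eqref{eq:IL-primal}--\eqref{eq:IL-dual} involve only $(z,\theta,\lambda)$ together with $\Omega$, the $\phi_j$, the $g_i$ and $\Theta$. None of them involves the observations. So both problems are optimized over the same feasible set $\mathcal{F}$.

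On $\mathcal{F}$ the two objectives differ by the constant $\sum_k\|x^k-\bar{x}\|_2^2$, which is independent of $(z,\theta,\lambda)$. Hence, for any two feasible points, the two objectives rank them identically. It follows that the two problems have the same set of optimal solutions $(z,\theta,\lambda)$ and optimal values that differ by exactly this constant. Equivalently, feasibility and optimality transfer both ways, including the case where no minimizer is attained, since the infima also differ by the same constant.

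\textbf{Step 3: dependence on the data.} In the reformulated problem \eqref{eq:IL-centroid}, the only data-dependent quantities are the scalar $K>0$ and $\bar{x}$. Multiplying the objective by $K>0$ does not change the minimizers, so the minimizers depend on $\mathcal{X}$ only through $\bar{x}$.

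None of these steps is a real obstacle; the computation is elementary. The only point needing care is in Step 2: the equivalence must be stated at the level of argmin sets and shifted optimal values, not as equality of the objective values.
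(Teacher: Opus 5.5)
Your proof is correct and follows essentially the paper's route: expand the squared norm directly, then note that the leftover term is constant in $(z,\theta,\lambda)$ over a feasible set that does not depend on the data. The only difference is where you expand: centering at $\bar{x}$ makes the cross term vanish and produces the stated constant $\sum_k\|x^k-\bar{x}\|_2^2$ directly, whereas the paper expands about the origin and leaves the constant as $\sum_k\|x^k\|_2^2 - K\|\bar{x}\|_2^2$, which is the same quantity.
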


The reduction \eqref{eq:aggregation} is specific to squared Euclidean loss. For $\ell_1$ loss, the analogous aggregation uses the componentwise median. For general $\ell_p$ losses with $p \neq 1, 2$, no finite-dimensional aggregation exists, and the solver must retain all $K$ data points. The scalability of $\mathcal{IL}$ via aggregation is thus loss-dependent, though the constraint-side complexity reduction (Theorem~\ref{thm:IL-complexity}) holds universally. The aggregation property enables:
(i) {Streaming updates:} $\bar{x}_{K+1} = \frac{K}{K+1}\bar{x}_K + \frac{1}{K+1}x^{K+1}$;
(ii) {Parallelization:} partial centroids from data partitions can be combined via weighted averaging; and
(iii) {Memory efficiency:} only $\bar{x} \in \mathbb{R}^n$ need be stored, not the full dataset $\mathcal{X} \in \mathbb{R}^{n \times K}$.

%────────────────────────────────────────────────────────────────────────────────
\subsection{Characterization of $\mathcal{IL}$ Properties}
%────────────────────────────────────────────────────────────────────────────────

%\paragraph{Parameter Set Characterization.}
Unlike classical IO, which returns a single (often arbitrary) feasible parameter, $\mathcal{IL}$ explicitly characterizes all compatible parameters. Under Assumption~\ref{assump:slater}, the KKT conditions are necessary and sufficient for optimality, and the normal cone admits the representation
\begin{equation}\label{eq:normal-cone-representation}
N_\Omega(z) = \left\{ \sum_{i \in I(z)} \mu_i \nabla g_i(z) : \mu_i \geq 0 \right\} = \operatorname{cone}\{\nabla g_i(z) : i \in I(z)\},
\end{equation}
where $I(z) = \{i : g_i(z) = 0\}$ is the active set. 

\begin{theorem}[Parameter Set Characterization]\label{thm:IL-param-set}
Let $(z^*, \theta^*, \lambda^*)$ solve $\mathcal{IL}(\mathcal{X}, \Omega)$. Define the active set $I(z^*) = \{i : g_i(z^*) = 0\}$. Under Assumption~\ref{assump:slater}, the set of all parameters for which $z^*$ is optimal is
\begin{equation}\label{eq:theta-set}
\Theta^*(z^*) = \left\{ \theta \in \Theta : A(z^*)\theta \in -N_\Omega(z^*) \right\},
\end{equation}
where $A(z^*) = [\nabla\phi_1(z^*) \cdots \nabla\phi_p(z^*)]$ and $N_\Omega(z^*)$ is given by \eqref{eq:normal-cone-representation}.
\end{theorem}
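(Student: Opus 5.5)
The claim is a set equality, and I will prove it by showing both inclusions between $\Theta^*(z^*)$, the set of $\theta\in\Theta$ for which $z^*$ solves $\mathcal{FO}(\theta,\Omega)$, and the set on the right of \eqref{eq:theta-set}. The tool throughout is Assumption~\ref{assump:slater}. Slater's condition holds for $\Omega$ and does not depend on $\theta$. For every $\theta\in\Theta\subseteq\mathbb{R}^p_+$, the objective $f(\cdot,\theta)=\sum_j\theta_j\phi_j$ is a nonnegative combination of convex $C^1$ functions, so it is convex and $C^1$. Hence for each fixed $\theta\in\Theta$ the KKT conditions at $z^*$ are necessary and sufficient for optimality of $z^*$.

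The first step is to show that the KKT system at a fixed $\theta$ is equivalent to the cone condition. Stationarity plus complementarity reads $A(z^*)\theta=-\sum_{i}\lambda_i\nabla g_i(z^*)$ with $\lambda\ge0$ and $\lambda_i g_i(z^*)=0$. Complementarity forces $\lambda_i=0$ for $i\notin I(z^*)$, so the right-hand side is a nonnegative combination of $\{\nabla g_i(z^*):i\in I(z^*)\}$. Conversely, any element of $-\operatorname{cone}\{\nabla g_i(z^*):i\in I(z^*)\}$ gives multipliers $\lambda$ by setting $\lambda_i=\mu_i$ on $I(z^*)$ and $\lambda_i=0$ elsewhere, which satisfy all KKT conditions since $z^*$ is feasible. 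Thus KKT at $(z^*,\theta)$ holds if and only if $A(z^*)\theta\in-\operatorname{cone}\{\nabla g_i(z^*):i\in I(z^*)\}$.

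The second step is to identify this cone with $N_\Omega(z^*)$, using the representation \eqref{eq:normal-cone-representation}. To justify it, I would note that the inclusion $\operatorname{cone}\{\nabla g_i\}\subseteq N_\Omega$ follows from the gradient inequality for convex $g_i$. The reverse inclusion holds because under Slater the linearized (tangent) cone equals the tangent cone of $\Omega$ at $z^*$, and Farkas' lemma then gives $N_\Omega(z^*)$ as the polar of the linearized cone, which is the finitely generated cone above. This is the only nontrivial point, and I expect it to be the main obstacle. If the paper treats \eqref{eq:normal-cone-representation} as given under Assumption~\ref{assump:slater}, I will simply invoke it.

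Combining the two steps with the KKT sufficiency and necessity gives the result. If $z^*$ is optimal for $\theta\in\Theta$, then KKT holds, so $\theta$ lies in the right-hand set. Conversely, if $\theta$ lies in the right-hand set, the constructed multipliers certify KKT, so $z^*$ is optimal. Finally, I would remark that this set is nonempty because it contains $\theta^*$, since $(z^*,\theta^*,\lambda^*)$ is feasible for \eqref{eq:IL}.
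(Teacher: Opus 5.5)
Your proof is correct and follows the same route as the paper. Complementarity confines the multipliers to $I(z^*)$, which gives the cone inclusion; conversely, the cone representation lets you build multipliers; and KKT necessity and sufficiency under Slater's condition closes both directions. Your added justifications (convexity of $f(\cdot,\theta)$ for $\theta\ge 0$, and the Abadie/Farkas derivation of \eqref{eq:normal-cone-representation}) are sound, but the paper does not need them: it takes the representation as given, and the theorem statement itself stipulates it.
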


%\paragraph{Geometric Interpretation.}
For the linear case, $\mathcal{IL}$ admits a geometric characterization that clarifies its structure.

\begin{assumption}[Regularity for Consistency]\label{assump:IL-regularity}
The following conditions hold:
(i) $\Theta$ is compact and the mapping $(z, \theta) \mapsto A(z)\theta$ is continuous;
(ii) the feasible region $\Omega$ is closed and convex;
(iii) the true solution $z_0$ lies in the relative interior of a face $F_0$ of $\mathcal{Z}^*$; and
(iv) there exists a neighborhood $\mathcal{U}$ of $z_0$ such that $\mathcal{Z}^* \cap \mathcal{U} = F_0 \cap \mathcal{U}$, where $F_0$ is a closed convex set.
\end{assumption}

\begin{proposition}[Geometric Interpretation for Linear $\mathcal{IL}$]\label{prop:IL-geometry}
Consider the linear case with $\phi_j(x) = x_j$ and polyhedral $\Omega = \{x : Ax \geq b\}$. Let
$\mathcal{Z}^* = \{ z \in \Omega : \exists \theta \in \Theta \text{ such that } z \in \arg\min_{x \in \Omega} \theta^\top x \}$
be the set of points that can be made optimal for some $\theta$. Under Assumption~\ref{assump:IL-regularity}(iv), in a neighborhood $\mathcal{U}$ of $\bar{x}$ where $\mathcal{Z}^* \cap \mathcal{U}$ coincides with a single convex face $F$, the $\mathcal{IL}$ solution satisfies
$z^*_{\mathcal{IL}} = \arg\min_{z \in F} \|z - \bar{x}\|_2^2 = \operatorname{proj}_{F}(\bar{x})$,
i.e., $z^*_{\mathcal{IL}}$ is the projection of the data centroid onto the locally active face.
\end{proposition}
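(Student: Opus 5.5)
The plan is to reduce $\mathcal{IL}$ in the linear case to a nearest-point problem over $\mathcal{Z}^*$, and then localize it to the face $F$.

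\textbf{Step 1 (eliminate $(\theta,\lambda)$).} In the linear case with $\phi_j(x)=x_j$ and $g_i(x)=b_i-a_i^\top x$, Slater's condition (Assumption~\ref{assump:slater}) makes the KKT system \eqref{eq:IL-primal}--\eqref{eq:IL-dual} necessary and sufficient for $z\in\arg\min_{x\in\Omega}\theta^\top x$. Hence the projection of the feasible set of $\mathcal{IL}$ onto the $z$-coordinates is exactly $\mathcal{Z}^*$. Combining this with Proposition~\ref{prop:aggregation}, solving $\mathcal{IL}(\mathcal{X},\Omega)$ is equivalent to computing
\[
\min_{z\in\mathcal{Z}^*} K\|z-\bar{x}\|_2^2 .
\]
Any minimizer $z^*$ of this problem, together with a certificate $(\theta,\lambda)$, solves $\mathcal{IL}$, and conversely.

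\textbf{Step 2 (localize to $F$).} By Assumption~\ref{assump:IL-regularity}(iv), read here at $\bar{x}$, we have $\mathcal{Z}^*\cap\mathcal{U}=F\cap\mathcal{U}$ with $F$ closed and convex. Therefore $p:=\operatorname{proj}_F(\bar{x})$ exists and is unique. I would interpret the hypothesis as saying that the nearest point of $\mathcal{Z}^*$ to $\bar{x}$ lies in $\mathcal{U}$, so that the global minimizer $z^*$ lies in $\mathcal{U}$. Then $z^*\in F$, and
\[
\|z^*-\bar{x}\|\le\|p-\bar{x}\| .
\]
Conversely, $p\in F$. If $p\in\mathcal{U}$, then $p\in\mathcal{Z}^*$ and so $\|z^*-\bar{x}\|\le\|p-\bar{x}\|$ directly. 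Otherwise, I would move from $z^*$ along the segment toward $p$, which stays in $F$ by convexity. Near $z^*$ this segment stays in $\mathcal{U}$, hence in $\mathcal{Z}^*$. Since $\|\cdot-\bar{x}\|^2$ is strictly convex along the segment, moving toward $p$ strictly decreases the distance unless $z^*=p$. This contradicts local optimality of $z^*$ in $\mathcal{Z}^*$. I conclude $z^*=p$.

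\textbf{Main obstacle.} The delicate point is the localization: $\mathcal{Z}^*$ is a nonconvex union of faces, so points of $\mathcal{Z}^*$ outside $\mathcal{U}$ could in principle be closer to $\bar{x}$. I will make explicit that the proposition is a local statement. Either $\mathcal{U}$ is taken to contain the nearest points of $\mathcal{Z}^*$ to $\bar{x}$, or $z^*_{\mathcal{IL}}$ is understood as the local minimizer in $\mathcal{U}$. In the second reading, the segment argument above is exactly the first-order optimality of the projection onto the convex set $F$: the variational inequality $(\bar{x}-z^*)^\top(y-z^*)\le 0$ for all $y\in F$ holds locally and, by convexity of $F$, extends globally on $F$. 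That characterizes $z^*=\operatorname{proj}_F(\bar{x})$.
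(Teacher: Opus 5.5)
Your proposal is correct and follows the paper's route: reduce $\mathcal{IL}$ via the centroid identity and KKT sufficiency to $\min_{z\in\mathcal{Z}^*}\|z-\bar{x}\|_2^2$, then use $\mathcal{Z}^*\cap\mathcal{U}=F\cap\mathcal{U}$ and uniqueness of the projection onto the closed convex set $F$. Your explicit localization hypothesis and segment (variational-inequality) argument supply the step the paper's brief proof leaves implicit. One slip: since $z^*\in F$ and $p=\operatorname{proj}_F(\bar{x})$, the first displayed inequality in Step 2 should read $\|p-\bar{x}\|\le\|z^*-\bar{x}\|$, not the reverse.
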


%\paragraph{\bf Statistical Guarantees.}
To establish the statistical properties of the Inverse Learning framework, we first formalize the data-generating process. We assume that the observed data represent noisy deviations around a single latent optimal decision.

\begin{assumption}[Data Generating Process]\label{assump:IL-data}
Observations are generated as $x^k = z_0 + \xi^k$, where $z_0 \in \Omega$ is optimal for $\mathcal{FO}(\theta_0, \Omega)$ for some true parameter $\theta_0 \in \Theta$, and $\{\xi^k\}_{k=1}^K$ are i.i.d.\ random vectors with $\mathbb{E}[\xi^k] = 0$ and $\mathbb{E}[\|\xi^k\|_2^2] = \sigma^2 < \infty$.
\end{assumption}

Under this noise model, we can establish the asymptotic consistency of the $\mathcal{IL}$ estimator. Specifically, as the sample size grows, the recovered solution almost surely converges to the true latent optimum, and the recovered parameter set correctly bounds the compatible true parameters.

\begin{theorem}[Consistency of Inverse Learning]\label{thm:IL-consistency}
Under Assumptions~\ref{assump:slater}, \ref{assump:IL-regularity}, and \ref{assump:IL-data}, as $K \to \infty$:
\begin{enumerate}[(i)]
    \item $\bar{x} \xrightarrow{\text{a.s.}} z_0$.
    \item $z^*_{\mathcal{IL}} \xrightarrow{\text{a.s.}} z_0$.
    \item The parameter set mapping is outer semicontinuous: $\limsup_{K \to \infty} \Theta^*(z^*_{\mathcal{IL}}) \subseteq \Theta^*(z_0)$.
\end{enumerate}
\end{theorem}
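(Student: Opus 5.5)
Part (i) is the strong law of large numbers. Under Assumption~\ref{assump:IL-data}, $\bar{x} = z_0 + \frac{1}{K}\sum_{k=1}^K \xi^k$, where the $\xi^k$ are i.i.d.\ with finite mean zero (finite second moment implies a finite first moment). Kolmogorov's SLLN gives $\frac{1}{K}\sum_k \xi^k \to 0$ almost surely, so $\bar{x} \to z_0$ almost surely. The rest of the argument is pathwise: fix an outcome $\omega$ in this probability-one event and treat $\bar{x}_K(\omega)$ as a deterministic sequence converging to $z_0$.

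For part (ii), Proposition~\ref{prop:aggregation} shows that $z^*_{\mathcal{IL}}$ is a minimizer of $\|z-\bar{x}\|_2^2$ over the projection of the $\mathcal{IL}$ feasible set onto $z$. That projection is exactly $\mathcal{Z}^*$, the set of points optimal for some $\theta\in\Theta$: under Slater (Assumption~\ref{assump:slater}) the KKT conditions characterize optimality. So $z^*_{\mathcal{IL}} \in \arg\min_{z\in\mathcal{Z}^*}\|z-\bar{x}\|$. First I would show $\mathcal{Z}^*$ is closed. Take $z_\ell\to z$ with $\theta_\ell\in\Theta$ making $z_\ell$ optimal. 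Compactness of $\Theta$ (Assumption~\ref{assump:IL-regularity}(i)) gives a subsequence $\theta_\ell\to\theta$. Closedness of $\Omega$ and continuity of $f$ then pass the optimality inequality $f(z_\ell,\theta_\ell)\le f(x,\theta_\ell)$ to the limit. Hence minimizers exist. Since $z_0\in\mathcal{Z}^*$,
\[
\|z^*_{\mathcal{IL}}-z_0\| \le \|z^*_{\mathcal{IL}}-\bar{x}\|+\|\bar{x}-z_0\| \le 2\|\bar{x}-z_0\| \to 0 .
\]
This triangle-inequality bound is the key step. It avoids needing uniqueness of the projection, and the local face structure in Assumption~\ref{assump:IL-regularity}(iii)--(iv) is not required for (ii). For large $K$ those assumptions additionally identify $z^*_{\mathcal{IL}}=\operatorname{proj}_{F_0}(\bar{x})$, in line with Proposition~\ref{prop:IL-geometry}, because $z^*_{\mathcal{IL}}$ eventually lies in $\mathcal{U}$.

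Part (iii) is the main obstacle, and I would prove it as a graph-closedness statement. Take $\theta_K\in\Theta^*(z^*_K)$ along a subsequence with $\theta_K\to\theta$; then $\theta\in\Theta$ by compactness. By Theorem~\ref{thm:IL-param-set}, $\theta_K\in\Theta^*(z^*_K)$ means $z^*_K$ is optimal for $\mathcal{FO}(\theta_K,\Omega)$, i.e.\ $f(z^*_K,\theta_K)\le f(x,\theta_K)$ for all $x\in\Omega$. Passing to the limit using $z^*_K\to z_0$ from (ii) and joint continuity of $f$ gives $f(z_0,\theta)\le f(x,\theta)$. So $z_0$ is optimal for $\theta$, and Theorem~\ref{thm:IL-param-set} again gives $\theta\in\Theta^*(z_0)$.

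Working through the variational inequality avoids the difficulty that normal cones $N_\Omega(z)$ are only outer semicontinuous. Equivalently, one can use closedness of the graph of $N_\Omega$ together with continuity of $A(z)\theta$; that closedness is standard for closed convex $\Omega$. The one subtlety is that I need $f$ to be jointly continuous, not just $A(z)\theta$. This follows because the $\phi_j$ are continuous and $f$ is bilinear in $(\theta,\phi(x))$. The Painlevé--Kuratowski $\limsup$ is exactly the set of such subsequential limits, so this establishes (iii).
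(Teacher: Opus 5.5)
Your proof is correct. Part (i) matches the paper, and part (iii) is the same closed-graph argument; part (ii) takes a genuinely different route.

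\textbf{Part (ii).} The paper argues locally. Once $\bar{x}\in\mathcal{U}$, it uses Assumption~\ref{assump:IL-regularity}(iv) to identify $z^*_{\mathcal{IL}}$ with the projection of $\bar{x}$ onto the convex face $F_0$. Nonexpansiveness then gives $\|z^*_{\mathcal{IL}}-z_0\|_2=\|\operatorname{proj}_{F_0}(\bar{x})-\operatorname{proj}_{F_0}(z_0)\|_2\le\|\bar{x}-z_0\|_2$. You instead use only that $z_0\in\mathcal{Z}^*$ is a feasible comparator in the global minimization over the closed set $\mathcal{Z}^*$, which yields $\|z^*_{\mathcal{IL}}-z_0\|_2\le 2\|\bar{x}-z_0\|_2$.

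Your version buys several things:
\begin{itemize}
\item It needs neither Assumption~\ref{assump:IL-regularity}(iii)--(iv) nor uniqueness, so it covers every selection of minimizers.
\item It proves existence of $z^*_{\mathcal{IL}}$ via closedness of $\mathcal{Z}^*$, which the paper takes for granted.
\item It is non-asymptotic, so rates for the centroid transfer directly to $z^*_{\mathcal{IL}}$ (e.g.\ $\mathbb{E}\|\bar{x}-z_0\|_2^2=\sigma^2/K$ under Assumption~\ref{assump:IL-data}).
\item It supplies the step the paper leaves tacit: the \emph{global} minimizer over $\mathcal{Z}^*$ eventually lies in $\mathcal{U}$, which is what justifies writing it as the local projection at all. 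Your closing remark identifying $z^*_{\mathcal{IL}}$ with $\operatorname{proj}_{F_0}(\bar{x})$ for large $K$ is exactly this.
\end{itemize}
Once that step is granted, the paper's local argument gives the sharper constant $1$. It also gives the explicit projection structure reused in Theorem~\ref{thm:IL-solution-identifiability} and Proposition~\ref{prop:IL-robust}.

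\textbf{Part (iii).} The paper composes outer semicontinuity of $z\mapsto N_\Omega(z)$ with continuity of $(z,\theta)\mapsto A(z)\theta$. You pass the optimality inequality to the limit using joint continuity of $f$, then translate back through the characterization ``$\theta\in\Theta^*(z)$ iff $z$ is optimal for $\mathcal{FO}(\theta,\Omega)$.'' You need that characterization at $z_0$, which is not an $\mathcal{IL}$ solution. The argument of Theorem~\ref{thm:IL-param-set} nonetheless applies verbatim at any $z\in\Omega$, given Slater's condition and $\theta\ge 0$.

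Finally, there is no difficulty for your route to avoid. Outer semicontinuity of $N_\Omega$ is exactly the property the inclusion $\limsup_{K}\Theta^*(z^*_{\mathcal{IL}})\subseteq\Theta^*(z_0)$ requires, and it holds.
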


Beyond asymptotic consistency, the centroid aggregation property of the $\mathcal{IL}$ framework also imparts a natural degree of finite-sample robustness against anomalous observations, provided the structural neighborhood is preserved.

\begin{proposition}[Robustness to Outliers]\label{prop:IL-robust}
Let $\mathcal{X}_{\text{out}} = \mathcal{X} \cup \{x_{\text{out}}\}$ where $\|x_{\text{out}} - \bar{x}\|_2 = R$. Under Assumption~\ref{assump:IL-regularity}(iv), if the corrupted centroid $\bar{x}_{\text{out}} = \frac{K\bar{x} + x_{\text{out}}}{K+1}$ remains in $\mathcal{U}$, then
$\|z^*_{\text{out}} - z^*_{\mathcal{IL}}\|_2 \leq \|\bar{x}_{\text{out}} - \bar{x}\|_2 = R/(K+1)$.
\end{proposition}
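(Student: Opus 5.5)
We will reduce the statement to the nonexpansiveness of Euclidean projection onto a closed convex set, using the local geometric characterization already established. The argument has three steps.

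First, by Proposition~\ref{prop:aggregation}, the $\mathcal{IL}$ problem on any dataset is equivalent to minimizing $\|z-\bar{x}\|_2^2$ over the set $\mathcal{Z}^*$ of points that are optimal for some $\theta\in\Theta$. The constraints \eqref{eq:IL-primal}--\eqref{eq:IL-dual} are exactly membership of $z$ in $\mathcal{Z}^*$, with $(\theta,\lambda)$ as witnesses. This applies both to $\mathcal{X}$, with centroid $\bar{x}$, and to $\mathcal{X}_{\text{out}}$, with centroid $\bar{x}_{\text{out}}$.

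Second, we use Assumption~\ref{assump:IL-regularity}(iv). On $\mathcal{U}$ we have $\mathcal{Z}^*\cap\mathcal{U}=F\cap\mathcal{U}$ with $F$ closed and convex. As in Proposition~\ref{prop:IL-geometry}, we identify $z^*_{\mathcal{IL}}=\operatorname{proj}_F(\bar{x})$ and $z^*_{\text{out}}=\operatorname{proj}_F(\bar{x}_{\text{out}})$, since both centroids lie in $\mathcal{U}$.

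This identification is the main obstacle. A global minimizer over $\mathcal{Z}^*$ could in principle lie outside $\mathcal{U}$, on some other face. We interpret the hypothesis the way Proposition~\ref{prop:IL-geometry} does, namely that the $\mathcal{U}$ in question is a neighborhood in which the minimizer is governed by the single face $F$. If a more careful argument is wanted, one shows that the projections stay in $\mathcal{U}$ by taking $\mathcal{U}$ to be a ball whose radius dominates the distance from the centroid to $F$, and then invokes local-equals-global. We will state this reliance explicitly.

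Third, we apply the standard firm nonexpansiveness of $\operatorname{proj}_F$ for closed convex $F$. Adding the variational inequalities $\langle \bar{x}-p,\,q-p\rangle\le 0$ and $\langle \bar{x}_{\text{out}}-q,\,p-q\rangle\le 0$ gives
\[
\|p-q\|_2^2\le\langle \bar{x}-\bar{x}_{\text{out}},\,p-q\rangle,
\]
and Cauchy--Schwarz then yields $\|z^*_{\text{out}}-z^*_{\mathcal{IL}}\|_2\le\|\bar{x}_{\text{out}}-\bar{x}\|_2$.

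Finally, we compute the shift in the centroid directly:
\[
\bar{x}_{\text{out}}-\bar{x}=\frac{K\bar{x}+x_{\text{out}}}{K+1}-\bar{x}=\frac{x_{\text{out}}-\bar{x}}{K+1}.
\]
Its norm is therefore $R/(K+1)$, which completes the bound.
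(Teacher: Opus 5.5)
Your proposal is correct and follows the paper's route: the paper's proof is the one-line observation that $z^*_{\mathcal{IL}}$ and $z^*_{\text{out}}$ are both projections onto the same convex face $F_0$, and that projection onto a convex set is 1-Lipschitz. You also supply the variational-inequality derivation, the centroid-shift computation giving $R/(K+1)$, and an explicit flag of the local-versus-global identification, which the paper leaves implicit; one clean way to close that gap is to require both centroids to lie in $B(z_0,\rho/2)$ for some ball $B(z_0,\rho)\subseteq\mathcal{U}$, since then any point of $\mathcal{Z}^*$ at least as close to the centroid as its projection onto $F_0$ is forced into $F_0$.
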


%────────────────────────────────────────────────────────────────────────────────
\subsection{Identifiability of Inverse Learning}
%────────────────────────────────────────────────────────────────────────────────

We establish the central theoretical contribution: $\mathcal{IL}$ achieves \emph{solution identifiability} under conditions that differ from---and are in many cases less restrictive than---those required by classical $\mathcal{IO}$ for \emph{parameter identifiability}. Classical inverse optimization seeks to identify the true parameter $\theta_0$, which requires one-dimensional normal cones (Assumption~\ref{assump:unique-normal}) and orthogonal persistent excitation $S \succ 0$ (Assumption~\ref{assump:orth-excite}). These conditions are restrictive. Inverse Learning instead focuses on identifying the \emph{true optimal solution} $z_0$ rather than the parameter $\theta_0$, enabling identifiability under different conditions.

\begin{definition}[Solution Identifiability]\label{def:solution-identifiability}
The inverse problem is \emph{solution-identifiable} if the optimal solution $z^*$ is uniquely determined by the data $\mathcal{X}$, even when multiple parameters $\theta \in \Theta^*(z^*)$ are compatible with $z^*$.
\end{definition}

Inverse Learning uniquely pinpoints the true optimal solution, provided the optimality set is sufficiently well-behaved in the neighborhood of that solution.

\begin{theorem}[Solution Identifiability of $\mathcal{IL}$]\label{thm:IL-solution-identifiability}
Under Assumptions~\ref{assump:slater} and \ref{assump:IL-regularity}, the $\mathcal{IL}$ solution $z^*_{\mathcal{IL}}$ is unique. Moreover, $z^*_{\mathcal{IL}}$ is consistently estimable: $z^*_{\mathcal{IL}} \xrightarrow{\text{a.s.}} z_0$ as $K \to \infty$ under Assumption~\ref{assump:IL-data}.
\end{theorem}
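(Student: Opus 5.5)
The plan is to reduce $\mathcal{IL}$ to a Euclidean projection problem and then use strict convexity of the squared norm on a convex set. First, I would invoke Proposition~\ref{prop:aggregation}: the $\mathcal{IL}$ objective equals $K\|z-\bar{x}\|_2^2$ plus a constant, so $z^*_{\mathcal{IL}}$ minimizes $\|z-\bar x\|_2^2$ over the projection onto the $z$-coordinates of the feasible set of \eqref{eq:IL-primal}--\eqref{eq:IL-dual}. Under Assumption~\ref{assump:slater}, KKT is necessary and sufficient for optimality, so the admissible $z$ are exactly those that are optimal for $\mathcal{FO}(\theta,\Omega)$ for some $\theta\in\Theta$, i.e., the set $\mathcal{Z}^*$. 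Thus $z^*_{\mathcal{IL}}\in\arg\min_{z\in\mathcal{Z}^*}\|z-\bar x\|_2$, and the $\theta,\lambda$ components play no role in $z$-uniqueness. The compatible parameters are then given by Theorem~\ref{thm:IL-param-set}.

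Next, I would localize. By Assumption~\ref{assump:IL-regularity}(iii)--(iv), there is a neighborhood $\mathcal{U}$ of $z_0$ on which $\mathcal{Z}^*$ coincides with the closed convex set $F_0$. I would show that when $\bar x$ is close enough to $z_0$, every minimizer of $\|z-\bar x\|$ over $\mathcal{Z}^*$ lies in $\mathcal{U}$. Since $z_0\in\mathcal{Z}^*$, any minimizer satisfies $\|z-\bar x\|\le\|z_0-\bar x\|$, and hence $\|z-z_0\|\le 2\|\bar x-z_0\|$. Choosing a ball $B(z_0,r)\subseteq\mathcal{U}$, it therefore suffices to have $\|\bar x-z_0\|<r/2$. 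Inside $\mathcal{U}$ the problem becomes projection onto $F_0\cap\mathcal{U}$. Because $F_0$ is closed and convex and $z\mapsto\|z-\bar x\|_2^2$ is strictly convex, the projection $\operatorname{proj}_{F_0}(\bar x)$ is unique. Every global minimizer over $\mathcal{Z}^*$ lies in $F_0\cap\mathcal{U}$ and is at least as close as any point of $F_0\cap\mathcal{U}$, so it must equal this projection. Any two minimizers $z,z'$ would give a midpoint in $F_0$ (and in the ball, hence in $\mathcal{U}$) with strictly smaller distance, which is a contradiction. This matches Proposition~\ref{prop:IL-geometry}.

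Finally, consistency follows from Theorem~\ref{thm:IL-consistency}(i)--(ii). The strong law gives $\bar x\to z_0$ almost surely, so for all large $K$ the localization condition $\|\bar x-z_0\|<r/2$ holds. Then $\|z^*_{\mathcal{IL}}-z_0\|=\|\operatorname{proj}_{F_0}(\bar x)-\operatorname{proj}_{F_0}(z_0)\|\le\|\bar x-z_0\|\to0$ by nonexpansiveness of projection onto a convex set, using $z_0\in F_0$.

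The main obstacle is the localization step. Uniqueness is really only guaranteed once $\bar x$ lies in the region where the nonconvex set $\mathcal{Z}^*$ looks like $F_0$. For a finite sample with $\bar x$ far from $z_0$, $\mathcal{Z}^*$ (for instance, a union of faces in the linear case) may admit several equidistant nearest points. I would therefore state uniqueness as holding whenever $\bar x\in B(z_0,r/2)$, which is eventually almost sure, and read the theorem's uniqueness claim in that sense. The distance bound $\|z-z_0\|\le2\|\bar x-z_0\|$ is what makes this rigorous, and I would check carefully that the midpoint argument stays inside $\mathcal{U}$ by working in the convex ball $B(z_0,r)$.
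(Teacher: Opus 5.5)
Your proof follows the same route as the paper's: strict convexity of $\|z-\bar x\|_2^2$ on the locally convex set $F_0$ gives uniqueness, and the strong law plus nonexpansiveness of $\operatorname{proj}_{F_0}$ gives consistency. Where you differ is at the one point the paper's proof skips. The paper simply asserts that, because $\mathcal{Z}^*\cap\mathcal{U}$ coincides with $F_0$, the minimizer is unique. This silently assumes that every global minimizer over the nonconvex set $\mathcal{Z}^*$ lies in $\mathcal{U}$; its consistency argument also projects onto $F_0\cap\mathcal{U}$, which need not be convex. Your bound $\|z-z_0\|\le 2\|\bar x-z_0\|$, together with working in a convex ball $B(z_0,r)\subseteq\mathcal{U}$, supplies exactly this missing localization. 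Your caveat is also justified. For $\bar x$ far from $z_0$, uniqueness can genuinely fail under the stated assumptions, e.g., at a point equidistant from two adjacent edges of $\mathcal{Z}^*$ in a linear instance. So the uniqueness claim should indeed be read in the eventual sense you give: it holds once $\|\bar x-z_0\|<r/2$, hence almost surely for all large $K$. One step to make explicit: identifying the minimizer with $\operatorname{proj}_{F_0}(\bar x)$ requires this projection to lie in $\mathcal{U}$. That follows from $\|\operatorname{proj}_{F_0}(\bar x)-z_0\|\le\|\bar x-z_0\|<r/2$. Combined with the fact that every point of $\mathcal{Z}^*$ outside $B(z_0,r)$ is at distance greater than $r/2$ from $\bar x$, the same estimate also shows that the $\mathcal{IL}$ minimizer exists, without needing $\mathcal{Z}^*$ to be closed.
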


Table~\ref{tab:identifiability-comparison} summarizes the divergent requirements for classical $\mathcal{IO}$ versus $\mathcal{IL}$. The following proposition explicitly details why these two sets of conditions are mathematically incomparable.

\begin{proposition}[IL Achieves Solution Identifiability Under Different Conditions]\label{prop:IL-different-conditions}
The conditions for solution identifiability in $\mathcal{IL}$ (Assumption~\ref{assump:IL-regularity}) differ from those for parameter identifiability in classical $\mathcal{IO}$ (Assumptions~\ref{assump:unique-normal}--\ref{assump:orth-excite}) as follows:
\begin{enumerate}[(i)]
    \item $\mathcal{IL}$ does not require one-dimensional normal cones. The solution $z^*_{\mathcal{IL}}$ can lie at a vertex where multiple constraints are active.
    \item $\mathcal{IL}$ does not require orthogonal excitation. The gradient matrix $A(z)$ need not satisfy any rank condition for solution uniqueness.
    \item $\mathcal{IL}$ requires local convexity of the optimality set $\mathcal{Z}^*$ near the true solution (Assumption~\ref{assump:IL-regularity}(iv)), a condition not needed by classical $\mathcal{IO}$.
\end{enumerate}
The two sets of conditions are thus incomparable: neither strictly implies the other.
\end{proposition}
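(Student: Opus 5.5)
The plan is to prove the three items separately, each by an explicit witness, and then obtain incomparability by exhibiting one instance that satisfies Assumption~\ref{assump:IL-regularity} but violates Assumptions~\ref{assump:unique-normal}--\ref{assump:orth-excite}, and another that does the reverse. Throughout, uniqueness of $z^*_{\mathcal{IL}}$ will come from Theorem~\ref{thm:IL-solution-identifiability}. The local picture will come from Proposition~\ref{prop:IL-geometry}, which gives $z^*_{\mathcal{IL}}=\operatorname{proj}_F(\bar x)$ for a closed convex $F$. The point is that this characterization refers only to the geometry of $\mathcal{Z}^*$ near $z_0$. It never refers to $\dim N_\Omega(z^*)$ or to $A(z^*)$.

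For (i), I will take a linear program in $\mathbb{R}^2$ with a polyhedral $\Omega$ and choose $\theta_0$ in the interior of the normal cone at a vertex $v$. Then $z_0=v$, and $N_\Omega(v)$ is two-dimensional, so Assumption~\ref{assump:unique-normal} fails. Since $\theta_0$ is interior, every direction near it also selects $v$. Taking the face $F_0=\{v\}$, I will check locally that $\mathcal{Z}^*\cap\mathcal{U}=\{v\}$ for a small $\mathcal U$. Here $\mathcal{Z}^*$ is a union of faces of $\Omega$, and near a vertex the only face containing $v$ is $\{v\}$ once we restrict to a neighbourhood avoiding the relative interiors of adjacent edges. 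This step needs care, and it is the main obstacle: the adjacent edges also lie in $\mathcal{Z}^*$, so $\mathcal Z^*\cap\mathcal U$ is really $\{v\}$ together with the edge pieces. I will handle this in one of two ways. The first option is to restrict $\Theta$ through the constraint $\theta_{j_0}\ge\alpha$ so that edge-selecting directions are excluded. The second option is to take $F_0$ as the convex hull of $v$ and the admissible edge pieces whenever that set is convex. If the second route is used, the projection is still unique, and Theorem~\ref{thm:IL-solution-identifiability} applies.

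For (ii), I will reuse the same construction with $p>n$, or with a basis in which two gradients coincide at $z_0$. Then $A(z)$ is rank-deficient in the relevant directions, so $S$ is singular and Proposition~\ref{prop:necessity-orth-excite} shows that parameter identifiability fails. Assumption~\ref{assump:IL-regularity} involves only $\Theta$ compact, continuity of $(z,\theta)\mapsto A(z)\theta$, and the local shape of $\mathcal Z^*$, none of which uses a rank condition. Hence $z^*_{\mathcal{IL}}$ remains unique.

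For (iii) and the converse direction, I will construct an instance where classical identifiability holds but item (iv) of Assumption~\ref{assump:IL-regularity} fails. Take $\Omega$ to be a smooth strictly convex set such as a disk, and a strongly convex quadratic objective. Choose observations on the boundary with distinct normals, so that Corollary~\ref{cor:sufficient-S-pd}(i) gives $S\succ0$ and Theorem~\ref{thm:identifiability} gives a unique $\theta$. In this instance $\mathcal Z^*$ is the whole boundary arc (or the disk minus some region), which is not locally a convex set near a boundary $z_0$. The projection of $\bar x$ onto this nonconvex set can therefore be non-unique: with $\bar x$ at the centre of a disk whose boundary lies in $\mathcal Z^*$, every boundary point is equidistant. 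This shows that Assumption~\ref{assump:IL-regularity}(iv) is genuinely needed and is not implied by the classical assumptions. Combining the two directions gives the claimed incomparability.
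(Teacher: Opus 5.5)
Your opening observation is essentially the paper's whole argument for (i)--(ii). The paper only notes that the hypotheses of Theorem~\ref{thm:IL-solution-identifiability} never involve $\dim N_\Omega(z_0)$ or $A(z_0)$. For (iii) it gives the polyhedral-face argument and simply asserts the converse.

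\textbf{Main gap: the converse witness.} The problem lies in your explicit witnesses, chiefly the converse one, which cannot exist as described. Under Assumption~\ref{assump:strongcx} the forward problem has a unique minimizer, so every imputed optimum of $\mathcal{ICO}$ equals $x^*(\theta)$. The normals $n^k$ therefore all coincide however the $x^k$ are spread, and Corollary~\ref{cor:sufficient-S-pd}(i) is unavailable for $n\ge 2$. More fundamentally, at any feasible point the KKT relation $A(z^k)\theta=-\alpha_k n^k$ gives $P_kA(z^k)\theta=0$ for every $k$, hence $S\theta=0$ with $\theta\neq 0$. So $S\succ 0$ never holds alongside a feasible parameter, and Theorem~\ref{thm:identifiability} cannot be instantiated at all.

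A meaningful converse needs two changes. First, use the up-to-scale condition $\ker S=\operatorname{span}\{\theta\}$, e.g.\ $\operatorname{rank}(P_{n^*}A(z_0))=p-1$ as in Proposition~\ref{prop:theta-star-structure}(iii). Second, choose the basis so that $\mathcal{Z}^*$ is genuinely curved near $z_0$; an unspecified strongly convex quadratic need not be. For instance, take $f=\theta_1\|x\|^2+\theta_2x_1+\theta_3x_2$ on a disk with $\theta\ge 0$ and $\theta_1\ge\alpha$. Then $\mathcal{Z}^*$ is a disk intersected with a quadrant, which is convex, so (iv) holds. A choice that works is $f=\theta_1\|x-c_1\|^2+\theta_2\|x-c_2\|^2$ on a disk, with the segment $[c_1,c_2]$ outside $\Omega$. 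Here $\mathcal{Z}^*$ is a circular arc, so (iv) fails at its relative-interior points. Meanwhile $\operatorname{rank}(P_{n^*}A(z_0))=1=p-1$ pins down $\theta$ on the normalized set.

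Your equidistant-centre picture also does not show that (iv) is ``genuinely needed.'' It presumes the whole circle lies in $\mathcal{Z}^*$, which you have not arranged. The centre is also far from $z_0$. Near a smooth arc the metric projection is single-valued, so this example suggests that local uniqueness survives failure of (iv).

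\textbf{Smaller issues in (i) and (ii).} In (i) only your first option is viable. If both edge normals are admissible, $\mathcal{Z}^*\cap\mathcal{U}$ is a union of two non-collinear segments, and no convex $F_0$ satisfies $F_0\cap\mathcal{U}=\mathcal{Z}^*\cap\mathcal{U}$. If only one edge is admissible, $v$ is an endpoint of that face, which violates Assumption~\ref{assump:IL-regularity}(iii). So fix explicit data, e.g.\ active normals $a_1=(2,-1)$ and $a_2=(-1,2)$ with $\theta_0\propto(1,1)$. Neither edge normal lies in $\mathbb{R}^2_+$, so no $\theta\in\Theta$ selects an edge, and $\mathcal{Z}^*\cap\mathcal{U}=\{v\}$.

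In (ii), Proposition~\ref{prop:necessity-orth-excite} does not apply at the LP vertex. That setting is not strongly convex and its normal cone is two-dimensional, so $P_k$ and $S$ are not even defined there. Passing to $p>n$ or to coinciding gradients also changes $\mathcal{Z}^*$. Condition (iv) must therefore be re-verified for the new instance rather than inherited. Singularity of $S$, by contrast, needs no argument: by $S\theta=0$ it is automatic as soon as $S$ is defined, i.e.\ whenever the normal cones at the imputed optima are one-dimensional.
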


Although $\mathcal{IL}$ does not require parameter identifiability, it can achieve it under the same conditions as classical $\mathcal{IO}$.

\begin{corollary}[Parameter Identifiability via $\mathcal{IL}$]\label{cor:IL-param-identifiability}
If the conditions of Theorem~\ref{thm:identifiability} hold (Assumptions~\ref{assump:strongcx}--\ref{assump:orth-excite}), then $\Theta^*(z^*_{\mathcal{IL}})$ is a singleton, and $\mathcal{IL}$ recovers the unique true parameter $\theta_0$ (up to normalization).
\end{corollary}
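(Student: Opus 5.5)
The plan is to apply the uniqueness argument of Theorem~\ref{thm:identifiability} to the single point $z^*_{\mathcal{IL}}$, viewed as the ($K=1$) collection of imputed optima. Theorem~\ref{thm:IL-param-set} already identifies $\Theta^*(z^*_{\mathcal{IL}})$ with $\{\theta\in\Theta : A(z^*_{\mathcal{IL}})\theta \in -N_\Omega(z^*_{\mathcal{IL}})\}$. So the claim reduces to showing that, under Assumptions~\ref{assump:strongcx}--\ref{assump:orth-excite} applied at $z^*_{\mathcal{IL}}$, this set contains exactly one element, and that this element equals $\theta_0$.

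\textbf{Step 1 (nonemptiness).} Feasibility of $(z^*_{\mathcal{IL}},\theta^*,\lambda^*)$ in \eqref{eq:IL-stat}--\eqref{eq:IL-dual}, together with the representation \eqref{eq:normal-cone-representation}, gives $\theta^*\in\Theta^*(z^*_{\mathcal{IL}})$.

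\textbf{Step 2 (uniqueness).} Take $\theta,\theta'\in\Theta^*(z^*_{\mathcal{IL}})$. Assumption~\ref{assump:unique-normal} lets me write $A\theta=-\alpha n$ and $A\theta'=-\alpha' n$ with $\alpha,\alpha'\ge 0$, where $A:=A(z^*_{\mathcal{IL}})$ and $n$ is the unit normal. Apply $P=I-nn^\top$ to get $PA(\theta-\theta')=0$, and hence $(\theta-\theta')^\top A^\top PA(\theta-\theta')=0$. Since the excitation matrix is $S=A^\top PA\succ0$ here, this forces $\theta=\theta'$ directly; no normalization is needed at this stage, and normalization only fixes the representative. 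This is the same computation that proves Theorem~\ref{thm:identifiability}, so I would simply invoke that theorem with the single imputed optimum $z^*_{\mathcal{IL}}$.

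\textbf{Step 3 (identifying $\theta_0$).} This is the step that connects the singleton to the truth, and it is the main obstacle. Under the data-generating setting of Assumption~\ref{assump:IL-data}, $\theta_0\in\Theta^*(z_0)$. I would try two routes.

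First, if the excitation conditions are read as holding at the true solution, I would use Theorem~\ref{thm:IL-solution-identifiability}, which gives $z^*_{\mathcal{IL}}=z_0$ in the noiseless or limiting regime. Then $\theta_0\in\Theta^*(z^*_{\mathcal{IL}})$, which is a singleton by Step 2, so it equals $\{\theta_0\}$.

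Second, in the finite-sample case I would combine the outer semicontinuity from Theorem~\ref{thm:IL-consistency}(iii) with the singleton property. Any limit point of the singletons $\Theta^*(z^*_{\mathcal{IL}})$ lies in $\Theta^*(z_0)=\{\theta_0\}$, and compactness of $\Theta$ (Assumption~\ref{assump:IL-regularity}(i)) then gives convergence to $\theta_0$.

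The delicate point is that Assumption~\ref{assump:orth-excite} is stated at the imputed optima. I therefore need it to hold at $z^*_{\mathcal{IL}}$ (for Step 2) and at $z_0$ (for uniqueness of the limit). I would argue that positive definiteness is an open condition, and that $A(\cdot)$ and $n(\cdot)$ are continuous on the relative interior of the face $F_0$ by Assumption~\ref{assump:IL-regularity}(iii)--(iv). Hence $S\succ0$ at $z_0$ transfers to nearby $z^*_{\mathcal{IL}}$ for large $K$. The result is stated as recovery "up to normalization," with the normalization in $\Theta$ selecting the representative.
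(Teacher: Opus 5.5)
The gap is in Step~2, and it also undermines your transfer argument. For a single point $z$, the excitation matrix is $S(z)=A(z)^\top P A(z)$ with $P=I-nn^\top$. Every $\theta\in\Theta^*(z)$ satisfies $A(z)\theta=-\alpha n$, hence $PA(z)\theta=-\alpha Pn=0$ and $S(z)\theta=0$.

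Step~1 gives you such a $\theta^*$ at $z^*_{\mathcal{IL}}$, and $\theta^*\neq 0$ because $\|\theta^*\|_q=1$. So $S(z^*_{\mathcal{IL}})$ is singular whenever $\Theta^*(z^*_{\mathcal{IL}})\neq\emptyset$, and the hypothesis of Step~2 can never hold together with Step~1. (The paper itself notes, after Proposition~\ref{prop:convex-multiple-rays}, that a one-point excitation matrix is singular for $p\ge 2$.) The same computation with $\theta_0$ shows that $S(z_0)\succ 0$ contradicts $\theta_0\in\Theta^*(z_0)$ from Assumption~\ref{assump:IL-data}. So the condition you want to transfer from $z_0$ never holds either, and your uniqueness is obtained only ex falso.

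The tell is your remark that no normalization is needed. Before normalization, $\theta$ and $2\theta$ always both satisfy the KKT system (with multipliers $\alpha$ and $2\alpha$), so unnormalized uniqueness is impossible in any genuine instance. The defect is inherited from Assumption~\ref{assump:orth-excite}, whose kernel always contains the scaling direction. The paper's own proof ($z^*_{\mathcal{IL}}\to z_0$, then $\Theta^*(z_0)=\{\theta_0\}$ by Theorem~\ref{thm:identifiability}, then outer semicontinuity) is vacuous for the same reason. Still, your Step~2 is exactly where the condition has to do real work.

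The missing idea is that the meaningful condition is $\ker S(z_0)=\operatorname{span}\{\theta_0\}$, i.e.\ $\operatorname{rank}S(z_0)=p-1$. With it your structure is worth keeping, because it targets the finite-$K$ singleton claim that the paper's argument never reaches: outer semicontinuity only gives $\limsup_K\Theta^*(z^*_{\mathcal{IL}})\subseteq\{\theta_0\}$. Four things must change.
\begin{itemize}
\item In Step~2, $\theta-\theta'\in\ker S$ gives only $\theta'=c\,\theta$. It is the normalization ($\|\cdot\|_q=1$ together with $\theta\ge 0$, which rules out $c=-1$) that forces $c=1$, so normalization is essential rather than cosmetic.
\item The transfer must use lower semicontinuity of rank ($\operatorname{rank}S(z)\ge p-1$ near $z_0$) together with the automatic cap $\operatorname{rank}S(z^*_{\mathcal{IL}})\le p-1$ that comes from $\theta^*\in\ker S(z^*_{\mathcal{IL}})$. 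Openness of positive definiteness is the wrong tool here.
\item You also need continuity of $n(\cdot)$ and persistence of a one-dimensional normal cone near $z_0$. These are extra hypotheses, since Assumption~\ref{assump:IL-regularity} supplies neither.
\item Step~3 then gives $\theta^*_K\to\theta_0$, i.e.\ recovery in the limit. Exact equality $\Theta^*(z^*_{\mathcal{IL}})=\{\theta_0\}$ at finite $K$ needs more, e.g.\ $z^*_{\mathcal{IL}}=z_0$ as with noiseless data.
\end{itemize}
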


When parameter identifiability fails, $\mathcal{IL}$ provides explicit characterization of the admissible parameter set.

\begin{proposition}[Structure of $\Theta^*(z^*)$]\label{prop:theta-star-structure}
The set $\Theta^*(z^*) = \{\theta \in \Theta : A(z^*)\theta \in -N_\Omega(z^*)\}$ has the following structure:
\begin{enumerate}[(i)]
    \item {General case:} $\Theta^*(z^*)$ is the intersection of $\Theta$ with the preimage of the cone $-N_\Omega(z^*)$ under the linear map $\theta \mapsto A(z^*)\theta$. This set is convex.
    
    \item {Polyhedral case:} If $\Omega$ is polyhedral, then $N_\Omega(z^*)$ is a polyhedral cone, and $\Theta^*(z^*)$ is a convex polyhedron.
    
    \item {One-dimensional normal cone:} If $N_\Omega(z^*) = \operatorname{cone}\{n^*\}$ for some $n^* \in \mathbb{R}^n$, then $\Theta^*(z^*) = \{\theta \in \Theta : A(z^*)\theta = -\alpha n^* \text{ for some } \alpha \geq 0\}$, and
    $\dim(\Theta^*(z^*) \cap \{\theta : \|\theta\|_2 = 1\}) = p - 1 - \operatorname{rank}(P_{n^*} A(z^*))$,
    where $P_{n^*} = I - n^*(n^*)^\top$ projects orthogonally to $n^*$.
\end{enumerate}
\end{proposition}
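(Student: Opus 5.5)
We prove the three parts in order, starting from the preimage description $\Theta^*(z^*) = \Theta \cap L^{-1}(-N_\Omega(z^*))$, where $L(\theta) := A(z^*)\theta$ is linear.

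\textbf{Part (i).} $N_\Omega(z^*)$ is a convex cone by \eqref{eq:normal-cone-representation}. The preimage of a convex cone under a linear map is again a convex cone in $\mathbb{R}^p$. Convexity of $\Theta^*(z^*)$ then depends on the normalization set. If $\Theta$ is taken as the simplex-type normalization $\mathbf{1}^\top\theta = 1$, $\theta\ge 0$, $\theta_{j_0}\ge\alpha$ (the case $q=1$ on $\mathbb{R}^p_+$), then $\Theta$ is convex and the intersection is convex. For $q>1$, the sphere $\|\theta\|_q=1$ is not convex. There I would state the claim for the cone $\{\theta\in\mathbb{R}^p_+:\theta_{j_0}\ge\alpha\|\theta\|_q,\ A(z^*)\theta\in-N_\Omega(z^*)\}$, which is convex. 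Its normalized slice is a convex set up to the radial identification, and for $q=1$ on the nonnegative orthant that slice is exactly $\Theta^*(z^*)$. Making this convention explicit is the main delicate point; everything else is routine.

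\textbf{Part (ii).} If $\Omega=\{x: g_i(x)\le 0\}$ is polyhedral, then $N_\Omega(z^*)=\operatorname{cone}\{\nabla g_i(z^*): i\in I(z^*)\}$ is finitely generated, hence a polyhedral cone $\{y: Hy\le 0\}$ by Minkowski--Weyl. The preimage $\{\theta: -HA(z^*)\theta\le 0\}$ is therefore polyhedral. Intersecting it with the polyhedral normalization $\mathbf{1}^\top\theta=1$, $\theta\ge0$, $\theta_{j_0}\ge\alpha$ gives a convex polyhedron (in fact a polytope). This is the same reasoning used for Proposition~\ref{prop:feasibility-set-structure}.

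\textbf{Part (iii).} When $N_\Omega(z^*)=\operatorname{cone}\{n^*\}$ with $\|n^*\|_2=1$, the membership condition is literally $A(z^*)\theta=-\alpha n^*$ for some $\alpha\ge0$. This is equivalent to $P_{n^*}A(z^*)\theta=0$ together with the sign condition $(n^*)^\top A(z^*)\theta\le 0$. The equality defines the subspace $\ker(P_{n^*}A(z^*))$, of dimension $p-\operatorname{rank}(P_{n^*}A(z^*))$ by rank--nullity. The sign constraint is a half-space, and the nonnegativity constraints $\theta\ge0$, $\theta_{j_0}\ge\alpha$ are further inequalities. None of these lower the dimension, provided the relevant cone has nonempty relative interior within that subspace, which I would assume generically (nonempty $\Theta^*$ with a point satisfying the inequalities strictly). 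Intersecting a $d$-dimensional cone with the unit sphere then leaves a manifold of dimension $d-1 = p-1-\operatorname{rank}(P_{n^*}A(z^*))$.

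The main obstacle in (iii) is the full-dimensionality caveat. If every feasible $\theta$ lies on the boundary of the half-space or orthant, the dimension formula becomes an upper bound rather than an equality. I would handle this either by adding a nondegeneracy hypothesis or by stating the formula for the linear span of the cone.
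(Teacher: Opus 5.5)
Your proposal is correct and follows essentially the paper's own argument. For (i) you take the convex-cone preimage under $\theta\mapsto A(z^*)\theta$; for (ii) you use a polyhedral description of $N_\Omega(z^*)$, where the paper instead lifts to $(\theta,\mu)$-space and projects rather than citing Minkowski--Weyl; for (iii) you use $\ker(P_{n^*}A(z^*))$ with rank--nullity, losing one dimension to the unit sphere.

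Your caveats are corrections rather than gaps, and your $q=1$ restriction and full-dimensionality hypothesis are what the statement actually needs:
\begin{itemize}
\item In (i) the paper's proof simply asserts that $\Theta$ is convex, which is false for $q>1$. In the linear case at the vertex $z^*=0$ of $\Omega=\mathbb{R}^2_+$, one gets $\Theta^*(z^*)=\Theta$, which is a circular arc when $q=2$.
\item In (ii) the paper never intersects with $\Theta$.
\item In (iii) the paper treats the dimension formula as exact by ignoring the constraints $\theta\ge 0$ and $\theta_{j_0}\ge\alpha$. The half-space $(n^*)^\top A(z^*)\theta\le 0$ by itself never lowers the dimension, so only those orthant-type constraints need your extra hypothesis.
\end{itemize}
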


Inverse Learning provides a computationally scalable and theoretically rigorous framework for inverse optimization in data-rich settings, where observations represent noisy measurements of a common latent optimal decision. Its key properties are:
(1)~{Data scalability:} Problem size is $O(n + m + p)$, independent of $K$, with data entering only through the centroid $\bar{x}$.
(2)~{Explicit non-uniqueness handling:} $\mathcal{IL}$ returns the full parameter set $\Theta^*(z^*)$, making ambiguity explicit.
(3)~{Statistical guarantees:} Consistency holds under standard regularity assumptions.
(4)~{Solution identifiability:} The optimal solution $z^*$ is uniquely identified under conditions that do not require parameter identifiability.

IL and classical $\mathcal{ICO}$ address different modeling scenarios: $\mathcal{IL}$ assumes a single latent optimal solution observed with noise, while $\mathcal{ICO}$ allows heterogeneous behavior across observations. The next section extends this framework to address the tradeoff between fidelity to observations and adherence to domain-specific constraints.

%===========================================================
%===========================================================

\section{Goal-Integrated Inverse Learning: Navigating the Observation-Constraint Tradeoff}
\label{sec:GIL}

The $\mathcal{IL}$ framework provides a computationally efficient approach to learning optimal solutions from observed data. However, a fundamental tension emerges in many applications: the solution $z^*_{\mathcal{IL}}$ that best fits observed behaviors may not adequately reflect expert-defined constraints or organizational goals. In healthcare contexts such as dietary management, a patient's observed food intake may systematically deviate from nutritional guidelines encoded in the constraints. While $\mathcal{IL}$ recovers a solution optimal for \emph{some} objective function, Theorem~\ref{thm:IL-param-set} establishes that this solution typically activates only a minimal set of constraints.

This section develops the \textit{Goal-Integrated Inverse Learning ($\mathcal{GIL}$)} framework, providing decision-makers with explicit mechanisms to navigate the tradeoff between observational fidelity and constraint satisfaction. We introduce two complementary models:
$\mathcal{GIL}$ (Section~\ref{subsec:GIL}) controls the exact number $r$ of binding relevant constraints, enabling systematic exploration of the tradeoff spectrum; and
\textit{Modified GIL ($\mathcal{MGIL}$)} (Section~\ref{subsec:MGIL}) provides structured sequential navigation by iteratively adding constraints while maintaining all previously active ones.

\begin{definition}[Observation-Constraint Tradeoff]\label{def:tradeoff}
Let $z_{\text{proj}} := \arg\min_{z \in \Omega} \sum_{k=1}^K \|x^k - z\|_2^2$ be the projection of observed data onto the feasible region (ignoring forward-optimality structure), and let $z_{\text{goal}} \in \Omega$ be a solution binding a maximal set of expert-defined constraints. The \emph{Observation-Constraint Tradeoff} is the spectrum of solutions that balance observational fidelity against constraint satisfaction, parameterized by the number and type of active constraints.
\end{definition}

\begin{assumption}[Constraint Hierarchy]\label{assump:constraint-hierarchy}
The constraint index set $\{1, \ldots, m\}$ admits a partition into three disjoint subsets:
(i) {Relevant Constraints} $\mathcal{R}$: constraints representing expert knowledge or goals that may not all be simultaneously active in practice;
(ii) {Preferred Constraints} $\mathcal{P} \subseteq \mathcal{R}$: a subset deemed particularly important by domain experts; and
(iii) {Trivial Constraints} $\mathcal{T} = \{1, \ldots, m\} \setminus \mathcal{R}$: constraints necessary for well-posedness (e.g., non-negativity) but not of primary interest.
We assume that for each $r \in \{1, \ldots, n\}$, there exist subsets of $\mathcal{R}$ of size $r$ whose constraint gradients are linearly independent at relevant feasible points (a local LICQ-type condition).
\end{assumption}

If no domain knowledge distinguishes constraint importance, set $\mathcal{R} = \{1, \ldots, m\}$, $\mathcal{P} = \emptyset$, and $\mathcal{T} = \emptyset$. The framework reduces to standard $\mathcal{IL}$ in limiting cases.

\subsection{Goal-Integrated Inverse Learning (GIL)}
\label{subsec:GIL}

GIL provides direct control over the number of binding relevant constraints through a cardinality parameter $r \in \{1, \ldots, n\}$.
\begin{subequations}\label{eq:GIL}
\begin{align}
\mathcal{GIL}(\mathcal{X}, \Omega, \mathcal{R}, \mathcal{P}, r, \omega): \quad
\min_{z, \theta, \lambda, v} \quad & \omega \sum_{k=1}^K \|x^k - z\|_2^2 - (1-\omega) \sum_{i \in \mathcal{P}} v_i \label{eq:GIL-obj}\\
\text{s.t.} \quad 
& \sum_{j=1}^p \theta_j \nabla \phi_j(z) + \sum_{i=1}^m \lambda_i \nabla g_i(z) = 0, \label{eq:GIL-stat}\\
& g_i(z) \leq 0, \quad \forall i \in \{1, \ldots, m\}, \label{eq:GIL-feas}\\
& \lambda_i \leq M v_i, \quad \forall i \in \{1, \ldots, m\}, \label{eq:GIL-comp-lambda}\\
& g_i(z) \leq -\varepsilon(1 - v_i), \quad \forall i \in \mathcal{R}, \label{eq:GIL-slack}\\
& v_i = 1 \Rightarrow g_i(z) = 0, \quad \forall i \in \{1, \ldots, m\}, \label{eq:GIL-active}\\
& v_i \in \{0, 1\}, \quad \forall i \in \{1, \ldots, m\}, \label{eq:GIL-binary}\\
& \sum_{i \in \mathcal{R}} v_i = r, \label{eq:GIL-card}\\
& \lambda \geq 0, \quad \theta \in \Theta, \label{eq:GIL-domain}
\end{align}
\end{subequations}
where $M > 0$ is a sufficiently large constant (see Remark~\ref{rem:big-M}), $\varepsilon > 0$ is a small slack parameter ensuring inactive relevant constraints are strictly slack, $\omega \in [0,1]$ controls the tradeoff weight, and $v_i \in \{0,1\}$ indicates whether constraint $i$ is selected as active.

\begin{remark}%[Big-$M$ and Indicator Constraints]
\label{rem:big-M}
Constraint \eqref{eq:GIL-active} can be linearized as $-g_i(z) \leq M(1 - v_i)$ for sufficiently large $M$. We assume $M$ is chosen as a valid upper bound satisfying $M \geq \max\{\lambda_i^* : (z^*, \theta^*, \lambda^*) \text{ is KKT-optimal for some } \theta \in \Theta\}$ and $M \geq \max_{z \in \Omega} |g_i(z)|$ for all $i$. Modern MIP solvers (Gurobi, CPLEX) support \emph{indicator constraints} directly (e.g., $v_i = 0 \Rightarrow \lambda_i = 0$ and $v_i = 1 \Rightarrow g_i(z) = 0$), which are numerically more stable and avoid specifying explicit bounds.
\end{remark}

\begin{remark}%[Complementarity Pattern Enforcement]
\label{rem:GIL-KKT}
Constraints \eqref{eq:GIL-comp-lambda}--\eqref{eq:GIL-active} enforce a chosen complementarity pattern: $v_i = 0$ forces $\lambda_i = 0$ and $g_i(z) \leq -\varepsilon$ (strictly slack for $i \in \mathcal{R}$), while $v_i = 1$ forces $g_i(z) = 0$ (active) and allows $\lambda_i \in [0, M]$. Combined with stationarity \eqref{eq:GIL-stat}, primal feasibility \eqref{eq:GIL-feas}, and dual feasibility \eqref{eq:GIL-domain}, any feasible solution satisfies the KKT conditions for some forward problem. Under Assumption~\ref{assump:slater}, KKT conditions are sufficient for optimality, so each feasible $(z, \theta, \lambda, v)$ corresponds to a point $z$ that is forward-optimal for parameter $\theta$.
\end{remark}

The slack constraint \eqref{eq:GIL-slack} ensures that for $i \in \mathcal{R}$ with $v_i = 0$, we have $g_i(z) \leq -\varepsilon < 0$, so the constraint is \emph{strictly} inactive. This prevents accidental activation where $g_i(z) = 0$ but $v_i = 0$, ensuring \eqref{eq:GIL-card} guarantees \emph{exactly} $r$ relevant constraints are binding. Note that for trivial constraints $i \in \mathcal{T}$, no slack enforcement applies; these may be incidentally active regardless of their $v_i$ value.

\subsubsection{Theoretical Properties of $\mathcal{GIL}$}

We begin by establishing the computational complexity of the $\mathcal{GIL}$ formulation. Like the base Inverse Learning model, $\mathcal{GIL}$ preserves the critical property of sample-size independence.

\begin{proposition}[$\mathcal{GIL}$ Complexity]\label{prop:GIL-complexity}
$\mathcal{GIL}$ has $O(n + p + m)$ continuous variables, $m$ binary variables, and $O(m)$ constraints---all independent of $K$. The objective depends on data only through $\bar{x} = \frac{1}{K} \sum_k x^k$.
\end{proposition}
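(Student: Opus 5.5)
The proof is a direct count of the decision variables and constraints in the formulation \eqref{eq:GIL}, followed by the same centroid identity used for $\mathcal{IL}$.

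\emph{Variables.} The continuous variables are $z \in \mathbb{R}^n$, $\theta \in \mathbb{R}^p$ and $\lambda \in \mathbb{R}^m$, so there are $n+p+m$ of them. The binary variables are $v \in \{0,1\}^m$, one per constraint index by \eqref{eq:GIL-binary}, giving exactly $m$. None of these vectors is indexed by $k$, so the variable count does not depend on $K$. The observations appear only as fixed data in the objective \eqref{eq:GIL-obj}.

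\emph{Constraints.} We go through the blocks one at a time.
\begin{itemize}
\item Stationarity \eqref{eq:GIL-stat} is a vector equation in $\mathbb{R}^n$, contributing $n$ scalar constraints.
\item Primal feasibility \eqref{eq:GIL-feas} contributes $m$.
\item The big-$M$ link \eqref{eq:GIL-comp-lambda} contributes $m$.
\item The slack constraints \eqref{eq:GIL-slack} contribute $|\mathcal{R}| \le m$.
\item The indicator constraints \eqref{eq:GIL-active} contribute $m$, whether written as indicator constraints or linearized as in Remark~\ref{rem:big-M}.
\item The cardinality constraint \eqref{eq:GIL-card} contributes $1$.
\item Nonnegativity of $\lambda$ contributes $m$ bounds.
\item The set $\Theta$ contributes $O(p)$ constraints: $p$ nonnegativity bounds, the normalization, and the bound $\theta_{j_0}\ge\alpha$. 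These are simple bounds on variables.
\end{itemize}
The total is $O(n+m)$ structural constraints, plus variable bounds, and no term depends on $K$. The statement writes $O(m)$ and implicitly absorbs the $n$ stationarity rows. The one place needing care is this convention: we must justify treating $n$ and $p$ as $O(m)$, or else state the count as $O(n+m)$, as in Theorem~\ref{thm:IL-complexity}. Under Slater's condition with a bounded feasible region one typically has $m \ge n$, which makes $n = O(m)$. If that is not assumed, we note that the stationarity block mirrors the $\mathcal{IL}$ count. Either way the main claim, independence from $K$, is unaffected.

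\emph{Data dependence.} Proposition~\ref{prop:aggregation} gives
$\omega\sum_k\|x^k-z\|_2^2 = \omega K\|z-\bar{x}\|_2^2 + \omega\sum_k\|x^k-\bar{x}\|_2^2.$
The second term is a constant independent of $(z,\theta,\lambda,v)$. The term $-(1-\omega)\sum_{i\in\mathcal{P}} v_i$ contains no data. Hence the minimizers of $\mathcal{GIL}$ coincide with those of the problem whose objective is $\omega K\|z-\bar{x}\|_2^2-(1-\omega)\sum_{i\in\mathcal{P}}v_i$ under the same constraints. This reformulated objective depends on the data only through $\bar{x}$; the factor $K$ is a scalar weight and can be regarded as part of $\omega$. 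The constraints involve no data at all, which completes the argument.
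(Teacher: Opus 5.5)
Your proposal is correct and matches the argument the paper intends. The paper gives no separate proof of this proposition, but its proofs of Theorem~\ref{thm:IL-complexity} and Proposition~\ref{prop:aggregation} use the same direct count and centroid identity, including treating $K$ as a scalar weight rather than as data.

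On the $O(m)$ versus $O(n+m)$ point you flag, you can drop the hedge. Under the standing Assumption~\ref{assump:constraint-hierarchy} with $r=n$, a subset of $\mathcal{R}$ of size $n$ must exist, so $m \ge |\mathcal{R}| \ge n$ and the $n$ stationarity rows are genuinely $O(m)$.
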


In order to ensure the user-specified constraint cardinality is actually attainable within the problem geometry, a standard realizability condition is required.

\begin{assumption}[Realizability]\label{assump:realizability}
For the given cardinality $r$, there exists at least one point $z \in \Omega$ such that:
(i) exactly $r$ relevant constraints are active: $|\{i \in \mathcal{R} : g_i(z) = 0\}| = r$;
(ii) the remaining relevant constraints satisfy $g_i(z) < 0$;
(iii) the gradients of the active constraints are linearly independent at $z$ (LICQ); and
(iv) there exist $(\theta, \lambda)$ satisfying stationarity \eqref{eq:GIL-stat} with $\theta \in \Theta$ and $\lambda \geq 0$.
\end{assumption}

We can formally guarantee that $\mathcal{GIL}$ is well-posed and yields a solution that is optimal for the forward problem.

\begin{theorem}[$\mathcal{GIL}$ Feasibility and Optimality]\label{thm:GIL-feasibility}
Under Assumptions~\ref{assump:slater}, \ref{assump:constraint-hierarchy}, and \ref{assump:realizability}, and if $\Omega$ is non-empty and compact, then:
(a) the feasible set of $\mathcal{GIL}$ is non-empty;
(b) an optimal solution $(z^*, \theta^*, \lambda^*, v^*)$ exists; and
(c) the solution $z^*$ is optimal for $\mathcal{FO}(\theta^*, \Omega)$.
\end{theorem}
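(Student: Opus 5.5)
We prove the three parts in order. Feasibility comes from an explicit construction using the realizability point, existence from a compactness argument over the finitely many binary patterns, and forward optimality from sufficiency of the KKT conditions under Slater.

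\emph{Part (a).} Take the point $\hat z\in\Omega$ from Assumption~\ref{assump:realizability}, together with its multipliers $(\hat\theta,\hat\lambda)$ satisfying \eqref{eq:GIL-stat}. Define $\hat v_i=1$ exactly for the $r$ active relevant constraints. For trivial constraints, set $\hat v_i=1$ when $g_i(\hat z)=0$ and $\hat v_i=0$ otherwise.

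Several constraints then hold immediately. The cardinality constraint \eqref{eq:GIL-card} holds because of (i). The slack constraint \eqref{eq:GIL-slack} holds for each inactive relevant $i$ once $\varepsilon\le\min_{i\in\mathcal R,\,g_i(\hat z)<0}(-g_i(\hat z))$, which is positive by (ii); this is the standing "small $\varepsilon$" convention. The indicator constraint \eqref{eq:GIL-active} holds by construction.

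The delicate constraint is \eqref{eq:GIL-comp-lambda}, which needs $\hat\lambda_i=0$ whenever $\hat v_i=0$. This is where I expect the main obstacle. Realizability (iv) only supplies some $\lambda\ge0$ with stationarity, not complementarity. I would repair this by reading (iv) as giving a KKT pair, i.e. $\hat z$ forward-optimal for $\hat\theta$. Then Slater's condition lets us take $\hat\lambda$ supported on the active set $I(\hat z)$, via the normal-cone representation \eqref{eq:normal-cone-representation}. LICQ in (iii) additionally makes $\hat\lambda$ unique and bounded. The choice of $M$ in Remark~\ref{rem:big-M} then gives $\hat\lambda_i\le M\hat v_i$. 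If (iv) cannot be read this way, I would state explicitly that complementarity is intended in (iv).

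\emph{Part (b).} The binary vector $v$ ranges over a finite set, so it suffices to show that each feasible pattern $v$ admits a minimizer in the continuous variables $(z,\theta,\lambda)$, and then take the best pattern.

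Fix $v$. Then $z$ lies in the compact set $\Omega$. The set $\Theta$ is closed and bounded by the norm constraint, and $\lambda\in[0,M]^m$. All remaining constraints are closed: the $g_i$ and $\nabla\phi_j,\nabla g_i$ are continuous, and the equality and weak inequality constraints define closed sets. So the feasible set is compact. The objective, which by Proposition~\ref{prop:aggregation} is $\omega K\|z-\bar x\|^2$ plus a constant, is continuous, and Weierstrass gives a minimizer. Part (a) guarantees that at least one pattern is feasible, so the minimum over patterns exists.

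\emph{Part (c).} By Remark~\ref{rem:GIL-KKT}, any feasible point satisfies stationarity, primal feasibility and $\lambda\ge0$. It also satisfies complementarity: if $v_i=0$ then $\lambda_i=0$, and if $v_i=1$ then $g_i(z)=0$. Since $\theta\ge0$ and each $\phi_j$ is convex, $f(\cdot,\theta^*)$ is convex, and the $g_i$ are convex. Under Assumption~\ref{assump:slater} the KKT conditions are therefore sufficient for optimality, so $z^*\in\arg\min_{x\in\Omega}f(x,\theta^*)$.
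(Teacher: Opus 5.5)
Your proof is correct and follows the same three-step route as the paper. You construct a feasible point from the realizability point, apply Weierstrass over the finitely many binary patterns, and invoke KKT sufficiency through Remark~\ref{rem:GIL-KKT}. Your extra care fills in details that the paper's proof leaves implicit, and these details are genuinely needed:
\begin{itemize}
\item You set $v_i=1$ on active trivial constraints. The paper's choice $v_i=0$ there would force $\lambda_i=0$ and could break stationarity.
\item You read Assumption~\ref{assump:realizability}(iv) as supplying a complementary KKT pair. Stationarity with $\lambda\ge 0$ alone does not yield feasibility.
\item You take $\varepsilon$ below the slack of the inactive relevant constraints, which the slack constraint requires.
\item You use $\lambda\le Mv$ together with compactness of $\Theta$, so each pattern's feasible set is compact rather than merely closed.
\end{itemize}
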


Similar to the Inverse Learning model, $\mathcal{GIL}$ provides an explicit mathematical boundary for all parameters compatible with the forced constraint activation pattern.

\begin{theorem}[$\mathcal{GIL}$ Parameter Set Characterization]\label{thm:GIL-param-set}
Let $(z^*, \theta^*, \lambda^*, v^*)$ solve $\mathcal{GIL}$. Define the active set $\mathcal{A}(z^*) := \{i : g_i(z^*) = 0\}$. Under the slack enforcement \eqref{eq:GIL-slack}, $\mathcal{A}(z^*) \cap \mathcal{R} = \{i \in \mathcal{R} : v^*_i = 1\}$. The set of all parameters for which $z^*$ is optimal is:
\begin{equation}\label{eq:GIL-theta-set}
\Theta^*(z^*) = \left\{ \theta \in \Theta : A(z^*)\theta \in -N_\Omega(z^*) \right\},
\end{equation}
where $N_\Omega(z^*) = \operatorname{cone}\{\nabla g_i(z^*) : i \in \mathcal{A}(z^*)\}$ under Assumption~\ref{assump:slater}.
\end{theorem}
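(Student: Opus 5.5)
The proof has two parts: the identity $\mathcal{A}(z^*)\cap\mathcal{R}=\{i\in\mathcal{R}: v^*_i=1\}$, and the characterization \eqref{eq:GIL-theta-set} of $\Theta^*(z^*)$. I would prove them in that order. Throughout I use that $(z^*,\theta^*,\lambda^*,v^*)$ is feasible for $\mathcal{GIL}$, which is all that is needed; optimality plays no role.

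\emph{Step 1 (active relevant set).} Fix $i\in\mathcal{R}$.
If $v^*_i=1$, constraint \eqref{eq:GIL-active} (or its big-$M$ linearization from Remark~\ref{rem:big-M}, together with \eqref{eq:GIL-feas}) gives $g_i(z^*)=0$, so $i\in\mathcal{A}(z^*)$.
If $v^*_i=0$, the slack constraint \eqref{eq:GIL-slack} gives $g_i(z^*)\le-\varepsilon<0$, so $i\notin\mathcal{A}(z^*)$.
Since $v^*_i\in\{0,1\}$ by \eqref{eq:GIL-binary}, these two cases establish the identity. Combined with \eqref{eq:GIL-card}, exactly $r$ relevant constraints bind at $z^*$. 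Trivial constraints are not constrained by this argument and may be incidentally active; this is why $N_\Omega(z^*)$ must be built from the full active set $\mathcal{A}(z^*)$ rather than from $\{i: v^*_i=1\}$.

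\emph{Step 2 (parameter set).} Here I would mirror the argument of Theorem~\ref{thm:IL-param-set}. Fix $\theta\in\Theta$. By Assumption~\ref{assump:basis}(iii), $f(\cdot,\theta)$ is convex and differentiable, and Slater's condition holds by Assumption~\ref{assump:slater}. Under these conditions, $z^*$ is optimal for $\mathcal{FO}(\theta,\Omega)$ if and only if KKT multipliers exist. Equivalently, $-\nabla_x f(z^*,\theta)=-A(z^*)\theta\in N_\Omega(z^*)$; this is the first-order condition $0\in\nabla f+N_\Omega$, whose necessity uses Slater. Slater's condition also yields the representation \eqref{eq:normal-cone-representation}: $N_\Omega(z^*)=\operatorname{cone}\{\nabla g_i(z^*): i\in\mathcal{A}(z^*)\}$. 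Rewriting the KKT stationarity $A(z^*)\theta=-\sum_{i\in\mathcal{A}}\lambda_i\nabla g_i(z^*)$ with $\lambda\ge0$, and setting $\lambda_i=0$ off the active set so that complementarity holds, gives exactly the membership in \eqref{eq:GIL-theta-set}. Both inclusions then follow. Nonemptiness is guaranteed because $\theta^*$ belongs to the set by Remark~\ref{rem:GIL-KKT}.

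\emph{Main obstacle.} The delicate point is the normal cone representation under Slater's condition alone. Equality with the cone of active gradients requires a constraint qualification at $z^*$. For convex $g_i$, Slater's condition implies this: via the Slater point $\hat x$, convexity gives $\nabla g_i(z^*)^\top(\hat x-z^*)\le g_i(\hat x)<0$ for active $i$, which is MFCQ. MFCQ in turn yields the Farkas-based description of the tangent and normal cones. I would cite this standard fact, as the paper already does for \eqref{eq:normal-cone-representation}. I would also note that the $\mathcal{GIL}$ multiplier bound $\lambda\le Mv$ does not shrink the set, since $\Theta^*$ quantifies over all $\lambda\ge0$ and no bound of this kind is imposed there.
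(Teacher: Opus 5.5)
Your proposal is correct and follows the paper's proof essentially step for step. The paper likewise derives $\mathcal{A}(z^*)\cap\mathcal{R}=\{i\in\mathcal{R}:v_i^*=1\}$ from \eqref{eq:GIL-slack} and \eqref{eq:GIL-active}, then defers to the KKT argument of Theorem~\ref{thm:IL-param-set}; your extra details (Slater implying MFCQ to justify the active-gradient form of $N_\Omega(z^*)$, the observation that $\lambda\le Mv$ does not shrink $\Theta^*(z^*)$, and the check that $\theta^*$ lies in the set) are points the paper leaves implicit, and all are sound.
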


GIL does \textbf{not} guarantee monotone increase of $D_r := \sum_k \|x^k - z^*_r\|_2^2$ with $r$. The feasible sets for different $r$ values are disjoint (not nested), so optimal values cannot be compared via containment arguments. Practitioners should solve $\mathcal{GIL}$ for a range of $r$ values to characterize the tradeoff empirically. As such, we also consider the Modified Goal-Integrated Inverse Learning in the following.

\subsection{Modified Goal-Integrated Inverse Learning ($\mathcal{MGIL}$)}
\label{subsec:MGIL}

While $\mathcal{GIL}$ explores the tradeoff by varying $r$, solutions for different $r$ may lie on entirely different faces of $\Omega$. $\mathcal{MGIL}$ provides a \emph{structured sequential path}: each iteration adds at least one new binding constraint while maintaining all previously active ones. Given a seed solution $z_{\text{prev}} \in \Omega$ with active set $\mathcal{A}_{\text{prev}} = \{i \in \mathcal{R} : g_i(z_{\text{prev}}) = 0\}$:
\begin{subequations}\label{eq:MGIL}
\begin{align}
\mathcal{MGIL}(\mathcal{X}, \Omega, \mathcal{R}, \mathcal{P}, z_{\text{prev}}, \omega): \quad
\min_{z, \theta, \lambda, v} \quad & \omega \sum_{k=1}^K \|x^k - z\|_2^2 - (1-\omega) \sum_{i \in \mathcal{P}} v_i \label{eq:MGIL-obj}\\
\text{s.t.} \quad 
& \sum_{j=1}^p \theta_j \nabla \phi_j(z) + \sum_{i=1}^m \lambda_i \nabla g_i(z) = 0, \label{eq:MGIL-stat}\\
& g_i(z) \leq 0, \quad \forall i \in \{1, \ldots, m\}, \label{eq:MGIL-feas}\\
& g_i(z) = 0, \quad \forall i \in \mathcal{A}_{\text{prev}}, \label{eq:MGIL-inherit}\\
& \lambda_i \leq M v_i, \quad \forall i \in \{1, \ldots, m\}, \label{eq:MGIL-comp-lambda}\\
& g_i(z) \leq -\varepsilon(1 - v_i), \quad \forall i \in \mathcal{R} \setminus \mathcal{A}_{\text{prev}}, \label{eq:MGIL-slack}\\
& v_i = 1 \Rightarrow g_i(z) = 0, \quad \forall i \in \{1, \ldots, m\}, \label{eq:MGIL-active}\\
& v_i \in \{0, 1\}, \quad \forall i \in \{1, \ldots, m\}, \label{eq:MGIL-binary}\\
& \sum_{i \in \mathcal{R} \setminus \mathcal{A}_{\text{prev}}} v_i \geq 1, \label{eq:MGIL-increment}\\
& \lambda \geq 0, \quad \theta \in \Theta. \label{eq:MGIL-domain}
\end{align}
\end{subequations}

Two constraints distinguish $\mathcal{MGIL}$ from $\mathcal{GIL}$:
(i)~{Inheritance} \eqref{eq:MGIL-inherit} forces all constraints active at $z_{\text{prev}}$ to remain active, ensuring solutions lie on nested faces; and
(ii)~{Increment} \eqref{eq:MGIL-increment} requires at least one new relevant constraint to bind, creating a structured path $z_0 \to z_1 \to z_2 \to \cdots$ of increasing constraint satisfaction. The following theorems provide the main theoretical properties of $\mathcal{MGIL}$.

\begin{theorem}[Monotone Distance Sequence]\label{thm:MGIL-monotone}
Let $\{z_\ell\}_{\ell=0}^L$ be generated by iteratively solving $\mathcal{MGIL}$ with $\omega = 1$ and $\mathcal{P} = \emptyset$, starting from $z_0$. Define $D_\ell := \sum_{k=1}^K \|x^k - z_\ell\|_2^2$. Then $D_0 \leq D_1 \leq D_2 \leq \cdots \leq D_L$.
\end{theorem}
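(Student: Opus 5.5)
The plan is a nested-feasible-set argument. With $\omega=1$ and $\mathcal{P}=\emptyset$, problem~\eqref{eq:MGIL} at iteration $\ell+1$ (seeded by $z_{\text{prev}}=z_\ell$) is $\min \sum_k\|x^k-z\|_2^2$ over a feasible set $\mathcal{F}_{\ell+1}$. I will bound $D_{\ell+1}$ from below by $D_\ell$ by comparing $\mathcal{F}_{\ell+1}$ with $\mathcal{F}_\ell$. Write $\mathcal{A}_\ell:=\{i\in\mathcal{R}: g_i(z_\ell)=0\}$.

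\textbf{Step 1: nested active sets.} The inheritance constraint~\eqref{eq:MGIL-inherit} at iteration $\ell$ forces $g_i(z_\ell)=0$ for all $i\in\mathcal{A}_{\ell-1}$. Hence $\mathcal{A}_{\ell-1}\subseteq\mathcal{A}_\ell$. The increment constraint~\eqref{eq:MGIL-increment}, together with~\eqref{eq:MGIL-active}, makes at least one $i\in\mathcal{R}\setminus\mathcal{A}_{\ell-1}$ binding at $z_\ell$, so the inclusion is strict. Consequently the faces $F_\ell:=\{z\in\Omega: g_i(z)=0\ \forall i\in\mathcal{A}_\ell\}$ are nested, $F_\ell\subseteq F_{\ell-1}$.

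\textbf{Step 2: projection onto $z$.} Let $\Pi_z(\mathcal{F}_{\ell+1})$ be the set of $z$-components of points of $\mathcal{F}_{\ell+1}$. Every such $z$ is KKT-optimal for some $\theta\in\Theta$, so it lies in $\mathcal{Z}^*$, and it also lies in $F_\ell$. I will show $\Pi_z(\mathcal{F}_{\ell+1})\subseteq\Pi_z(\mathcal{F}_\ell)$. Take $(z,\theta,\lambda,v)\in\mathcal{F}_{\ell+1}$. Reuse $(\theta,\lambda)$ and construct $v'$ for problem $\ell$ by setting $v'_i=1$ exactly on the constraints active at $z$ that lie outside $\mathcal{A}_{\ell-1}$ (and on any $i$ with $\lambda_i>0$).

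The items to check are the following.
\begin{enumerate}[(i)]
\item Inheritance at level $\ell$ holds because $\mathcal{A}_{\ell-1}\subseteq\mathcal{A}_\ell$.
\item The slack constraint~\eqref{eq:MGIL-slack} holds on $\mathcal{R}\setminus\mathcal{A}_{\ell-1}$. Indices in $\mathcal{A}_\ell\setminus\mathcal{A}_{\ell-1}$ are active at $z$ and receive $v'_i=1$; the remaining indices already satisfy the strict slack from problem $\ell+1$ or are active and receive $v'_i=1$.
\item The increment constraint holds, since $\mathcal{A}_\ell\setminus\mathcal{A}_{\ell-1}\neq\emptyset$ and all these indices are active at $z$.
\item Constraint~\eqref{eq:MGIL-comp-lambda} holds because $v'\ge v$ on the support of $\lambda$.
\end{enumerate}
Stationarity and feasibility are unchanged.

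\textbf{Step 3: conclusion.} Since $z_\ell$ minimizes the same objective over the larger set $\Pi_z(\mathcal{F}_\ell)$, we get $D_\ell\le D_{\ell+1}$. Inducting over $\ell$ gives the full chain $D_0\le\cdots\le D_L$.

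The main obstacle is the base case. $z_0$ is an arbitrary seed (for instance the $\mathcal{IL}$ solution), not the output of an $\mathcal{MGIL}$ problem, so Step 2 cannot be applied to it. For $\ell=0$ I plan to use that $z_0$ minimizes $\sum_k\|x^k-z\|^2$ over all of $\mathcal{Z}^*$, which is the $\mathcal{IL}$ problem, and that $\Pi_z(\mathcal{F}_1)\subseteq\mathcal{Z}^*$. This gives $D_0\le D_1$ whenever $z_0=z^*_{\mathcal{IL}}$, and I will state that seed convention explicitly.

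A further worry is that the rule "$v'_i=1$ for active $i$" could clash with~\eqref{eq:MGIL-active} for trivial constraints. It does not, because active constraints satisfy $g_i(z)=0$, so the implication $v'_i=1\Rightarrow g_i(z)=0$ holds automatically. Finally, an $\varepsilon$-slack index that is inactive at $z$ inherits its strict slack from problem $\ell+1$, since $\mathcal{R}\setminus\mathcal{A}_\ell\subseteq\mathcal{R}\setminus\mathcal{A}_{\ell-1}$.
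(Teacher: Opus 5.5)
Your proof is correct and uses the same nested-feasible-set argument as the paper: every $z$ that is feasible for $\mathcal{MGIL}$ at step $\ell+1$ is also feasible at step $\ell$, so minimizing the same objective over a smaller set can only increase $D$. Your version is tighter than the paper's one-line containment claim in two ways. First, literal containment in $(z,\theta,\lambda,v)$-space actually fails: an index in $\mathcal{A}_\ell\setminus\mathcal{A}_{\ell-1}$ may carry $v_i=0$ at step $\ell+1$ but would then violate \eqref{eq:MGIL-slack} at step $\ell$, and your re-choice of $v'$ with passage to $z$-projections repairs this. Second, you correctly make explicit that the base case $D_0\le D_1$ requires the seed convention $z_0=z^*_{\mathcal{IL}}$.
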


This monotonic increase in distance is a direct consequence of the underlying problem geometry. By iteratively adding binding constraints while maintaining all previously active ones, $\mathcal{MGIL}$ restricts the optimization to a sequence of progressively lower-dimensional, nested faces of the feasible region.

\begin{theorem}[Face Containment]\label{thm:MGIL-face}
For a sequence $\{z_\ell\}_{\ell=0}^L$ generated by $\mathcal{MGIL}$, define $\mathcal{F}_\ell := \{z \in \Omega : g_i(z) = 0, \, \forall i \in \mathcal{A}_\ell\}$, where $\mathcal{A}_\ell = \{i : g_i(z_\ell) = 0\}$. Then $\mathcal{F}_0 \supseteq \mathcal{F}_1 \supseteq \cdots \supseteq \mathcal{F}_L$ and $z_\ell \in \mathcal{F}_\ell$ for all $\ell$.
\end{theorem}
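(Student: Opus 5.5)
The proof is an induction on $\ell$ built from the two structural constraints of $\mathcal{MGIL}$: inheritance \eqref{eq:MGIL-inherit} and increment \eqref{eq:MGIL-increment}. We show two things: that $z_\ell \in \mathcal{F}_\ell$, and that $\mathcal{F}_{\ell+1}\subseteq\mathcal{F}_\ell$ for each $\ell$, which follows once we know the active sets are nested, $\mathcal{A}_\ell\subseteq\mathcal{A}_{\ell+1}$. Chaining the one-step inclusions then gives $\mathcal{F}_0\supseteq\cdots\supseteq\mathcal{F}_L$.

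The membership claim is immediate. Every iterate $z_\ell$ is feasible for \eqref{eq:MGIL-feas}, so $z_\ell\in\Omega$. By definition of $\mathcal{A}_\ell$, $g_i(z_\ell)=0$ for all $i\in\mathcal{A}_\ell$. Hence $z_\ell\in\mathcal{F}_\ell$. For $\ell=0$ this uses only $z_0\in\Omega$, which holds because the seed is assumed feasible.

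Nesting of the active sets is the key step. When $z_{\ell+1}$ is produced with seed $z_{\text{prev}}=z_\ell$, the inheritance constraint forces $g_i(z_{\ell+1})=0$ for every $i\in\mathcal{A}_{\text{prev}}$, so $\mathcal{A}_{\text{prev}}\subseteq\mathcal{A}_{\ell+1}$. One mismatch has to be handled here. In the $\mathcal{MGIL}$ formulation, $\mathcal{A}_{\text{prev}}$ consists only of relevant constraints active at $z_\ell$, whereas $\mathcal{A}_\ell$ in the theorem counts all active constraints, trivial ones included. I would resolve this in one of two ways:
\begin{itemize}
\item[(a)] read the recursion as passing the full active set $\mathcal{A}_\ell$ as the inherited set, which the inheritance constraint accommodates verbatim; or
\item[(b)] restrict $\mathcal{F}_\ell$ to relevant indices, i.e.\ $\mathcal{A}_\ell\cap\mathcal{R}$.
\end{itemize}
I expect this bookkeeping to be the main obstacle. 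I would state convention (a) explicitly, noting that it only strengthens the inheritance constraint and leaves the rest of the formulation unchanged. Under either convention we get $\mathcal{A}_\ell\subseteq\mathcal{A}_{\ell+1}$.

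Given nested active sets, take any $z\in\mathcal{F}_{\ell+1}$. Then $z\in\Omega$ and $g_i(z)=0$ for all $i\in\mathcal{A}_{\ell+1}\supseteq\mathcal{A}_\ell$, so $z\in\mathcal{F}_\ell$. This is the one-step inclusion, and induction completes the proof.

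The increment constraint gives more than the statement needs. Some $i\in\mathcal{R}\setminus\mathcal{A}_{\text{prev}}$ has $v_i=1$, so $g_i(z_{\ell+1})=0$ while $g_i(z_\ell)<0$. Therefore $\mathcal{A}_\ell\subsetneq\mathcal{A}_{\ell+1}$, and $z_\ell\in\mathcal{F}_\ell\setminus\mathcal{F}_{\ell+1}$. So the containment is strict, and the sequence terminates in at most $|\mathcal{R}|$ steps.
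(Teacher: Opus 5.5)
Your proof is correct and follows the paper's argument: the inheritance constraint \eqref{eq:MGIL-inherit} gives $\mathcal{A}_\ell\subseteq\mathcal{A}_{\ell+1}$, so the defining equalities of $\mathcal{F}_{\ell+1}$ include those of $\mathcal{F}_\ell$, and \eqref{eq:MGIL-increment} supplies the additional active constraint. Your explicit treatment of the mismatch between the relevant-only inherited set $\mathcal{A}_{\text{prev}}$ and the full active set $\mathcal{A}_\ell$ improves on the paper, whose proof simply asserts the nesting; read literally, a trivial constraint active at $z_\ell$ can go slack at $z_{\ell+1}$ and break the containment, so convention (a) or (b) is genuinely needed.
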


A powerful theoretical benefit of this nested geometric structure is that parameter compatibility is partially preserved along the sequential path. Specifically, for linear programs, the constant objective gradients mean that any parameter vector rationalizing an earlier solution remains structurally valid for all subsequent solutions in the sequence.

\begin{theorem}[Persistent Optimality (Linear Case)]\label{thm:MGIL-persistent}
Assume $f(x, \theta) = \theta^\top x$ with $p = n$ and $\phi_j(x) = x_j$ (linear objective). Let $\{z_\ell\}_{\ell=0}^L$ be generated by $\mathcal{MGIL}$. For any $\ell' \in \{0, \ldots, L\}$ and any $\ell \leq \ell'$, the solution $z_{\ell'}$ is optimal for $\mathcal{FO}(\theta, \Omega)$ for any $\theta \in \operatorname{cone}\{a_i : i \in \mathcal{A}_\ell\} \cap \Theta$, where $a_i$ are the constraint normals.
\end{theorem}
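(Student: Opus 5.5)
We argue directly from the LP optimality conditions, using the nested structure given by Theorem~\ref{thm:MGIL-face}. In the linear case with $\Omega=\{x: Ax\ge b\}$ (so $g_i(x)=b_i-a_i^\top x$), Assumption~\ref{assump:slater} makes the KKT conditions sufficient. Hence a point $z\in\Omega$ is optimal for $\mathcal{FO}(\theta,\Omega)$ whenever $\theta=\sum_{i\in I(z)}\mu_i a_i$ for some $\mu\ge 0$, that is, whenever $\theta\in N_\Omega(z)=\operatorname{cone}\{a_i: i\in I(z)\}$. The sign convention, minimization of $\theta^\top x$ over $Ax\ge b$, matches $\mathcal{ILO}$, where $\theta=A^\top\lambda$. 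The claim therefore reduces to the cone inclusion $\operatorname{cone}\{a_i:i\in\mathcal{A}_\ell\}\subseteq N_\Omega(z_{\ell'})$ for all $\ell\le\ell'$.

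\textbf{Step 1: the active sets are nested.} We show $\mathcal{A}_\ell\subseteq\mathcal{A}_{\ell'}$ by induction on $\ell'$. The iterate $z_{\ell+1}$ solves $\mathcal{MGIL}$ with seed $z_\ell$. The inheritance constraint \eqref{eq:MGIL-inherit} forces $g_i(z_{\ell+1})=0$ for all $i$ in the seed's active set, so $\mathcal{A}_\ell\subseteq\mathcal{A}_{\ell+1}$, and chaining gives the claim. This is essentially the content of Theorem~\ref{thm:MGIL-face}, since $z_{\ell'}\in\mathcal{F}_{\ell'}\subseteq\mathcal{F}_\ell$ means every constraint in $\mathcal{A}_\ell$ is tight at $z_{\ell'}$.

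\emph{This is the step I expect to be the main obstacle.} In \eqref{eq:MGIL} the inherited set $\mathcal{A}_{\text{prev}}$ is defined only over relevant constraints $\mathcal{R}$, while $\mathcal{A}_\ell$ in Theorem~\ref{thm:MGIL-face} ranges over all indices. If a trivial constraint is incidentally active at $z_\ell$, it need not remain active at $z_{\ell+1}$. I would handle this in one of two ways. The first is to read the statement with $\mathcal{A}_\ell$ as the inherited set used in \eqref{eq:MGIL-inherit}, i.e., to take $\mathcal{A}_{\text{prev}}$ as the full active set, as in Theorem~\ref{thm:MGIL-face}. The second is to note that the statement is invoked with $\mathcal{R}=\{1,\dots,m\}$ when no hierarchy is specified. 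In either reading, every index in $\mathcal{A}_\ell$ is forced to be tight at all later iterates.

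\textbf{Step 2: conclude via cone monotonicity.} Given $\mathcal{A}_\ell\subseteq I(z_{\ell'})$, any $\theta=\sum_{i\in\mathcal{A}_\ell}\mu_i a_i$ with $\mu\ge0$ extends to a multiplier vector $\lambda$ with $\lambda_i=\mu_i$ on $\mathcal{A}_\ell$ and $\lambda_i=0$ elsewhere. Then $\lambda\ge0$ and $A^\top\lambda=\theta$. Complementary slackness holds because $\lambda_i>0$ only on indices that are tight at $z_{\ell'}$. This gives the strong duality equality $\theta^\top z_{\ell'}=\lambda^\top A z_{\ell'}=\lambda^\top b$, and primal feasibility holds because $z_{\ell'}\in\Omega$. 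By weak duality, every $x\in\Omega$ satisfies $\theta^\top x=\lambda^\top Ax\ge\lambda^\top b=\theta^\top z_{\ell'}$, so $z_{\ell'}$ is optimal. Intersecting with $\Theta$ only restricts $\theta$ further, which completes the argument.
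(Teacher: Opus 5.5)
Your proof is correct and follows the same route as the paper's proof. Both establish nestedness $\mathcal{A}_\ell\subseteq\mathcal{A}_{\ell'}$ via Theorem~\ref{thm:MGIL-face}, then extend any $\theta\in\operatorname{cone}\{a_i:i\in\mathcal{A}_\ell\}$ to nonnegative multipliers supported on constraints tight at $z_{\ell'}$; you also write out the weak-duality step that the paper leaves implicit.

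The mismatch you flag is genuine. The paper's proof passes over it by citing Theorem~\ref{thm:MGIL-face}, whose own argument reads $\mathcal{A}_\ell\subseteq\mathcal{A}_{\ell+1}$ off \eqref{eq:MGIL-inherit} even though that constraint only covers relevant indices. Note that your first fix actually runs together two different repairs with ``i.e.'': taking $\mathcal{A}_\ell$ to be the relevant active set, or modifying $\mathcal{MGIL}$ so that $\mathcal{A}_{\text{prev}}$ is the full active set. Either one closes the gap, and the cleanest is the first, since it leaves $\mathcal{MGIL}$ unchanged.
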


For general convex objectives where $A(z)$ depends on $z$, the persistent optimality statement requires modification. The normal cone $N_\Omega(z_{\ell'})$ contains the cone generated by inherited constraint gradients, ensuring existence of \emph{some} $\theta \in \Theta$ making $z_{\ell'}$ optimal, but this $\theta$ may differ from parameters supporting earlier solutions.

\begin{corollary}[Structured Tradeoff Quantification]\label{cor:MGIL-tradeoff}
The sequence $\{z_\ell, D_\ell, \Theta^*(z_\ell)\}_{\ell=0}^L$ characterizes the observation-constraint tradeoff: each step reveals the marginal cost $\Delta D_\ell = D_{\ell+1} - D_\ell \geq 0$ of activating additional constraints.
\end{corollary}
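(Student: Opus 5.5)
The corollary has two claims: each increment $\Delta D_\ell = D_{\ell+1}-D_\ell$ is nonnegative, and the triple $\{z_\ell, D_\ell, \Theta^*(z_\ell)\}$ is well defined along the path. I will assemble the proof from Theorems~\ref{thm:MGIL-monotone}, \ref{thm:MGIL-face}, and \ref{thm:GIL-param-set}.

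\textbf{Step 1: nonnegativity of the increments.} Work in the setting of Theorem~\ref{thm:MGIL-monotone}, namely $\omega=1$ and $\mathcal{P}=\emptyset$. That theorem gives $D_0 \le D_1 \le \cdots \le D_L$. Taking consecutive differences yields $\Delta D_\ell \ge 0$ for every $\ell \in \{0,\dots,L-1\}$, which is the quantitative part of the claim.

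\textbf{Step 2: interpreting $\Delta D_\ell$ as a cost of activation.} I need to show that the increment is attributable to binding additional constraints. By Theorem~\ref{thm:MGIL-face}, $z_{\ell+1} \in \mathcal{F}_{\ell+1} \subseteq \mathcal{F}_\ell$, so every constraint active at $z_\ell$ stays active; this is the inheritance constraint \eqref{eq:MGIL-inherit}. The increment constraint \eqref{eq:MGIL-increment} guarantees at least one new relevant constraint binds. Together with the slack enforcement \eqref{eq:MGIL-slack}, these give $\mathcal{A}_\ell \cap \mathcal{R} \subsetneq \mathcal{A}_{\ell+1} \cap \mathcal{R}$. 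Hence the only structural change from step $\ell$ to step $\ell+1$ is the enlarged active set, and $\Delta D_\ell$ is exactly the extra squared distance to the data incurred by that change.

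\textbf{Step 3: the parameter component.} For each $\ell$, I apply the argument of Theorem~\ref{thm:GIL-param-set} verbatim to the $\mathcal{MGIL}$ solution. This works because the two feasible systems share the stationarity, complementarity and slack structure. It gives $\Theta^*(z_\ell)=\{\theta\in\Theta: A(z_\ell)\theta\in -N_\Omega(z_\ell)\}$, which is nonempty because $\theta_\ell$ is feasible. In the linear case, Theorem~\ref{thm:MGIL-persistent} adds that these sets are nested consistently along the path.

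\textbf{Main obstacle.} The delicate point is Step 1 when an iteration is infeasible or when $\omega<1$. For $\omega<1$, monotonicity of $D_\ell$ alone can fail, because the preferred-constraint reward $\sum_{i\in\mathcal{P}} v_i$ could trade off against distance. I would therefore state the corollary under the hypotheses of Theorem~\ref{thm:MGIL-monotone}, and note that the sequence terminates at $L$ once no further relevant constraint can be added.
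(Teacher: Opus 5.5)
Your proof is correct and follows the route the paper intends. The paper gives no separate argument: it treats $\Delta D_\ell \ge 0$ as an immediate consequence of Theorem~\ref{thm:MGIL-monotone}, with $\Theta^*(z_\ell)$ given by the characterization in Theorem~\ref{thm:GIL-param-set}, just as in your Steps~1 and~3.

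One correction to your closing caveat: monotonicity does not fail for $\omega<1$. Since $\mathcal{P}\subseteq\mathcal{R}$, preferred constraints active at $z_\ell$ are inherited, so $z_{\ell+1}$ (with $v_i=1$ on all its active relevant constraints) is feasible for the problem that produced $z_\ell$, with preferred reward at least as large. Optimality of $z_\ell$ (or, for $\ell=0$, of the $\mathcal{IL}$ solution over all KKT points) then forces $\omega D_\ell \le \omega D_{\ell+1}$. So monotonicity holds for every $\omega\in(0,1]$, and the restriction is needed only to exclude $\omega=0$.
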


\begin{theorem}[$\mathcal{MGIL}$ Consistency]\label{thm:MGIL-consistency}
Under Assumptions~\ref{assump:slater}, \ref{assump:IL-data}, and \ref{assump:IL-regularity}, let $\{z^0_\ell\}_{\ell=0}^{L_0}$ be the population sequence (infinite data) and $\{z^K_\ell\}_{\ell=0}^{L_K}$ the finite-sample sequence. For any fixed $\ell$:
$z^K_\ell \xrightarrow{P} z^0_\ell$ as $K \to \infty$.
\end{theorem}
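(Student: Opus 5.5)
The plan is to prove consistency of each step of the finite-sample $\mathcal{MGIL}$ path by induction on $\ell$. At each step, the data enter only through the centroid (Proposition~\ref{prop:aggregation}), so the problem is a projection of $\bar{x}_K$ onto a locally convex face. Throughout we take $\omega=1$ and $\mathcal{P}=\emptyset$; for general $\omega$ the penalty term does not depend on the data, and the same argument goes through once the selected binary pattern is held fixed.

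\textbf{Base case ($\ell=0$).} If the seed $z^K_0$ is the $\mathcal{IL}$ solution, Theorem~\ref{thm:IL-consistency}(ii) gives $z^K_0\to z^0_0$ almost surely, hence in probability. If instead the seed is a fixed $z_0$, the claim is trivial.

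\textbf{Inductive step.} Suppose $z^K_\ell \xrightarrow{P} z^0_\ell$. The first sub-goal is to show that the inherited active set stabilizes: $\mathcal{A}^K_\ell=\mathcal{A}^0_\ell$ with probability tending to one. Inactive constraints are easy. If $g_i(z^0_\ell)<0$, then continuity of $g_i$ gives $g_i(z^K_\ell)<0$ eventually. Active constraints are the delicate part. Equality $g_i(z^K_\ell)=0$ does not follow from convergence alone. Instead I will use the structure of step $\ell$: $z^K_\ell$ is the minimizer over the face defined by the selected pattern $v$. There are finitely many patterns (at most $2^m$), and the slack constraint \eqref{eq:MGIL-slack} forces unselected relevant constraints to satisfy $g_i\le-\varepsilon$. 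Together these separate the patterns by a margin $\varepsilon$. The selected pattern therefore cannot oscillate near $z^0_\ell$ once the objective gap between the best and second-best patterns in the population problem is positive.

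\textbf{Pattern-wise convergence.} Conditional on the inherited set being $\mathcal{A}^0_\ell$, the feasible set of $\mathcal{MGIL}$ is a finite union over patterns $v$ of closed sets $F_v$. On each such set, the objective $K\|z-\bar{x}_K\|^2$ (up to a constant) is minimized by the projection of $\bar{x}_K$ onto $F_v$. The argument has three parts.
\begin{enumerate}[(a)]
\item By Assumption~\ref{assump:IL-regularity}(iv), near the population minimizer $F_v$ agrees with a closed convex set. The projection onto that set is $1$-Lipschitz (as in Proposition~\ref{prop:IL-robust}).
\item Since $\bar{x}_K\to z_0$ almost surely (Theorem~\ref{thm:IL-consistency}(i)), the pattern-wise optimal values converge. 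So do the pattern-wise minimizers, by compactness of $\Theta$ and continuity of $(z,\theta)\mapsto A(z)\theta$ (a Berge maximum theorem argument).
\item Taking the minimum over finitely many patterns gives $z^K_{\ell+1}\xrightarrow{P} z^0_{\ell+1}$, provided the population minimizer is unique. Uniqueness holds by strict convexity of the squared distance on the locally convex face, as in Theorem~\ref{thm:IL-solution-identifiability}.
\end{enumerate}

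\textbf{Main obstacle.} The hardest part is the discontinuity coming from the combinatorial active-set selection. Ties between patterns, or a population solution sitting at a boundary where an extra constraint becomes incidentally active, could break convergence. I would handle this by making explicit a genericity condition, implicit in "the population sequence": the population optimal pattern at each step is unique and strictly separated in objective value. The $\varepsilon$-slack then prevents spurious activations. Under this condition, each step is a continuous argmin of a finite selection, and convergence in probability propagates to any fixed $\ell$.
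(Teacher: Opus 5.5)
Your skeleton matches the paper's proof: induction on $\ell$, the base case from Theorem~\ref{thm:IL-consistency}, stabilization of the inherited active set, and continuity of the step problem in $\bar{x}$. The paper only asserts stabilization ``under non-degeneracy''. Your use of the slack constraint \eqref{eq:MGIL-slack} and your explicit genericity hypothesis are a real sharpening, and some such hypothesis is unavoidable: with a population tie at some step, $z^K_\ell$ need not converge.

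\textbf{Uniqueness in (a) and (c).} This step fails as written. Assumption~\ref{assump:IL-regularity}(iv) describes $\mathcal{Z}^*$ only near $z_0$. Each later population iterate $z^0_{\ell+1}$ binds a relevant constraint that is slack at $z_0$, so it sits away from $z_0$, and nothing in the hypotheses controls the shape of $F_v$ there. In the linear case this is harmless: $A(z)=I_n$ and the $\nabla g_i$ are constant, so for a fixed pattern stationarity does not involve $z$ and $F_v$ is a polyhedron. For general convex $\phi_j,g_i$, however, $F_v$ is cut out by the $z$-dependent condition $A(z)\theta\in-\operatorname{cone}\{\nabla g_i(z): v_i=1\}$ and can be nonconvex, so strict convexity of the distance does not give uniqueness. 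The repair is to take ``the population step problem has a unique minimizer'' as your non-degeneracy condition and drop convexity altogether:
\begin{enumerate}[(i)]
\item each $F_v$ is a fixed closed set, with closedness coming from compactness of $\Theta$ and $0\le\lambda_i\le Mv_i$;
\item the nearest-point map onto a closed set is outer semicontinuous;
\item a unique population nearest point then forces convergence of any finite-sample selection.
\end{enumerate}
With this, your separate strict-gap condition on patterns is redundant. For $i\in\mathcal{R}\setminus\mathcal{A}_{\text{prev}}$, the slack constraint puts $g_i(z^K_{\ell+1})$ in $\{0\}\cup(-\infty,-\varepsilon]$, so convergence of the iterates by itself pins down the new relevant active set.

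\textbf{The seed at $\ell=0$.} That margin argument is only available for iterates produced by an $\mathcal{MGIL}$ step. At $\ell=0$ the seed is the $\mathcal{IL}$ solution, which carries no slack constraint, so your proof does not cover the transition $0\to 1$. Use Assumption~\ref{assump:IL-regularity}(iii)--(iv) instead. For large $K$, $z^K_0\in F_0$ is close to $z_0\in\operatorname{relint}F_0$, hence lies in $\operatorname{relint}F_0$. A convex $g_i\le 0$ on the convex set $F_0$ that vanishes at a relative-interior point vanishes on all of $F_0$, because a convex function attaining its maximum at a relative-interior point is constant. Together with continuity for the constraints slack at $z_0$, this gives $\mathcal{A}(z^K_0)=\mathcal{A}(z_0)$ eventually.

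\textbf{Induction hypothesis.} State it as the pair ``$z^K_\ell\xrightarrow{P}z^0_\ell$ and $\mathcal{A}^K_\ell=\mathcal{A}^0_\ell$ with probability tending to one.'' The first part alone does not yield the second. Moreover, the step-$(\ell+1)$ feasible set coincides with the population one only on the event where the second holds.
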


Algorithm \ref{alg:MGIL} operationalizes the observation-constraint tradeoff. The procedure is initialized by solving the base $\mathcal{IL}$ model to anchor the sequence at the point of maximum observational fidelity ($z_0$). At each iteration, the algorithm tightens the feasible region by solving the $\mathcal{MGIL}$ formulation, forcing at least one additional relevant constraint to bind while strictly inheriting all previously active ones. The algorithm introduces a user-defined marginal cost threshold ($\tau$). If the degradation in observational fit ($D_{\ell+1} - D_\ell$) required to satisfy the next constraint exceeds this tolerance, the sequential path terminates. This yields a structured, reliable menu of decision options that explicitly maps the cost of adherence. We next theorize the relationship between $\mathcal{GIL}$ and $\mathcal{MGIL}$.

\begin{algorithm}[t]
\caption{Modified Goal-Integrated Inverse Learning (MGIL)}
\label{alg:MGIL}
\begin{algorithmic}[1]
\REQUIRE Observations $\mathcal{X}$, constraint sets $\mathcal{R}, \mathcal{P}$, region $\Omega$, max iterations $L_{\max}$, weight $\omega$, threshold $\tau$
\ENSURE Sequence $\{z_\ell, \theta_\ell, D_\ell, \mathcal{A}_\ell\}_{\ell=0}^L$
\STATE Solve $\mathcal{IL}(\mathcal{X}, \Omega)$ to obtain $z_0, \theta_0$
\STATE $\mathcal{A}_0 \gets \{i \in \mathcal{R} : g_i(z_0) = 0\}$; \quad $D_0 \gets \sum_k \|x^k - z_0\|_2^2$; \quad $\ell \gets 0$
\WHILE{$\ell < L_{\max}$ \textbf{and} $|\mathcal{A}_\ell| < \min(|\mathcal{R}|, n)$}
    \STATE Solve $\mathcal{MGIL}(\mathcal{X}, \Omega, \mathcal{R}, \mathcal{P}, z_\ell, \omega)$ to obtain $(z_{\ell+1}, \theta_{\ell+1}, \lambda_{\ell+1}, v_{\ell+1})$
    \STATE $\mathcal{A}_{\ell+1} \gets \{i : g_i(z_{\ell+1}) = 0\}$; \quad $D_{\ell+1} \gets \sum_k \|x^k - z_{\ell+1}\|_2^2$
    \IF{$D_{\ell+1} - D_\ell > \tau$}
        \STATE \textbf{break} \COMMENT{Marginal cost exceeds threshold}
    \ENDIF
    \STATE $\ell \gets \ell + 1$
\ENDWHILE
\RETURN $\{z_\ell, \theta_\ell, D_\ell, \mathcal{A}_\ell\}_{\ell=0}^L$
\end{algorithmic}
\end{algorithm}

\subsection{Relationship Between $\mathcal{GIL}$ and $\mathcal{MGIL}$}
\label{subsec:GIL-MGIL-relation}

Let $z_0$ be the $\mathcal{IL}$ solution with $|\mathcal{A}_0| = r_0$ active relevant constraints.
(a) $\mathcal{GIL}$ with $r = r_0 + k$ explores solutions binding exactly $r_0 + k$ relevant constraints, but these may lie on any face satisfying the cardinality requirement. Different $r$ values yield solutions on potentially disconnected faces.
(b) $\mathcal{MGIL}$ iterated $k$ times from $z_0$ yields $\{z_\ell\}_{\ell=1}^k$ with $|\mathcal{A}_\ell| \geq r_0 + \ell$, and all solutions lie on nested faces: $z_\ell \in \mathcal{F}_\ell \subseteq \mathcal{F}_{\ell-1}$. It is best to \textit{use $\mathcal{GIL}$ when:} exploring qualitatively different solutions across the tradeoff, a specific target $r$ is known, or parallel computation is available.
\textit{Use $\mathcal{MGIL}$ when:} incremental adjustments are needed, marginal cost quantification is important, or consistency with an initial solution is required.

\subsection{Parameter Selection from the Cone}
\label{subsec:param-selection}

Both $\mathcal{GIL}$ and $\mathcal{MGIL}$ characterize the full parameter set $\Theta^*(z^*)$. When a single representative is needed:

\begin{definition}[Observation-Aligned Parameter]\label{def:param-selection}
Given solution $z^*$ with active set $\mathcal{A}(z^*)$, the observation-aligned parameter is:
%\begin{equation}\label{eq:param-select}
\[\theta^* \in \arg\min_{\theta \in \Theta^*(z^*)} \sum_{k=1}^K f(x^k, \theta).
\]
%\end{equation}
\end{definition}

For linear $f(x, \theta) = \theta^\top x$, this reduces to $\theta^* \in \arg\min_{\theta \in \Theta^*(z^*)} \theta^\top \bar{x}$: the parameter assigning lowest cost to average observed behavior.

\subsection{Comparison with Classical Inverse Optimization}
\label{subsec:GIL-classical}

Under comparable conditions, the $\mathcal{GIL}$ approach inherently dominates the traditional ``recover-then-optimize'' pipeline in terms of data fidelity.

\begin{theorem}[Dominance Over Recover-then-Optimize]\label{thm:GIL-dominance}
Let $\theta^{\text{ICO}}$ be recovered from classical $\mathcal{ICO}$, and let $z^{\text{FO}} \in \arg\min_{x \in \Omega} f(x, \theta^{\text{ICO}})$. Suppose $z^{\text{FO}}$ satisfies Assumption~\ref{assump:realizability} with $r = |\mathcal{A}(z^{\text{FO}}) \cap \mathcal{R}|$, i.e., $z^{\text{FO}}$ is feasible for $\mathcal{GIL}$ with this cardinality. Let $z^*_{\text{GIL}}$ solve $\mathcal{GIL}$ with the same $r$, $\omega = 1$, and $\mathcal{P} = \emptyset$. Then:
$\sum_{k=1}^K \|x^k - z^*_{\text{GIL}}\|_2^2 \leq \sum_{k=1}^K \|x^k - z^{\text{FO}}\|_2^2$.
\end{theorem}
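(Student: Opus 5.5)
The plan is a feasibility-plus-optimality argument. I will exhibit a feasible point of $\mathcal{GIL}$ (with the given $r$, $\omega=1$, $\mathcal{P}=\emptyset$) whose $z$-component is $z^{\text{FO}}$. Since $z^*_{\text{GIL}}$ is optimal, its objective value is at most that of this point. With $\omega=1$ and $\mathcal{P}=\emptyset$, the objective \eqref{eq:GIL-obj} is exactly $\sum_k\|x^k-z\|_2^2$, so this comparison is the claimed inequality.

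First I build the multipliers at $z^{\text{FO}}$. The point $z^{\text{FO}}$ minimizes $f(\cdot,\theta^{\text{ICO}})$ over $\Omega$, with $\theta^{\text{ICO}}\in\Theta$. Under Assumption~\ref{assump:slater}, the KKT conditions are necessary, so there is $\lambda^{\text{FO}}\ge 0$ satisfying stationarity \eqref{eq:GIL-stat} with $\theta=\theta^{\text{ICO}}$ and with $\lambda^{\text{FO}}_i g_i(z^{\text{FO}})=0$. Next I set the binaries: $v_i=1$ if $g_i(z^{\text{FO}})=0$, and $v_i=0$ otherwise.

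Then I check the constraints one at a time.
\begin{itemize}
\item Primal feasibility \eqref{eq:GIL-feas} holds because $z^{\text{FO}}\in\Omega$.
\item The indicator \eqref{eq:GIL-active} holds by the definition of $v$.
\item For \eqref{eq:GIL-comp-lambda}: if $v_i=0$, the constraint is inactive, so $\lambda^{\text{FO}}_i=0$ by complementarity. If $v_i=1$, we need $\lambda^{\text{FO}}_i\le M$, which follows from the choice of $M$ in Remark~\ref{rem:big-M} as a bound on KKT multipliers.
\item The cardinality constraint \eqref{eq:GIL-card} holds because $\sum_{i\in\mathcal{R}}v_i=|\mathcal{A}(z^{\text{FO}})\cap\mathcal{R}|=r$.
\item The domain constraint \eqref{eq:GIL-domain} holds because $\lambda^{\text{FO}}\ge0$ and $\theta^{\text{ICO}}\in\Theta$.
\end{itemize}

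The main obstacle is the slack constraint \eqref{eq:GIL-slack}. For $i\in\mathcal{R}$ with $v_i=0$, it requires $g_i(z^{\text{FO}})\le-\varepsilon$, not merely $g_i(z^{\text{FO}})<0$. Assumption~\ref{assump:realizability}(ii) gives only strict inactivity. I will handle this using the hypothesis as stated, namely that $z^{\text{FO}}$ is feasible for $\mathcal{GIL}$ with this cardinality. For an explicit justification, I would note that finitely many constraints are strictly negative at $z^{\text{FO}}$. One can therefore take $\varepsilon\le\min\{-g_i(z^{\text{FO}}): i\in\mathcal{R}\setminus\mathcal{A}(z^{\text{FO}})\}$, which is consistent with $\varepsilon$ being a small slack parameter.

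A similar caveat applies to the multiplier bound $\lambda^{\text{FO}}_i\le M$. The multipliers may be nonunique, but compactness of $\Theta$ and LICQ at $z^{\text{FO}}$ from Assumption~\ref{assump:realizability}(iii) make them unique and bounded, so the bound is valid.

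With feasibility established, $(z^{\text{FO}},\theta^{\text{ICO}},\lambda^{\text{FO}},v)$ is feasible for $\mathcal{GIL}$. An optimal solution exists by Theorem~\ref{thm:GIL-feasibility}. Its objective value is no larger than this point's, which yields $\sum_k\|x^k-z^*_{\text{GIL}}\|_2^2\le\sum_k\|x^k-z^{\text{FO}}\|_2^2$.
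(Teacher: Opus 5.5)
Your proposal is correct and uses the same feasibility-plus-optimality argument as the paper: $z^{\text{FO}}$, together with $\theta^{\text{ICO}}$, its KKT multipliers and the active-set indicators, is feasible for $\mathcal{GIL}$, so the optimal $z^*_{\text{GIL}}$ has weakly smaller objective $\sum_k\|x^k-z\|_2^2$. You are more careful than the paper's one-line feasibility assertion, notably in observing that the $\varepsilon$-slack constraint \eqref{eq:GIL-slack} needs more than the strict inactivity in Assumption~\ref{assump:realizability}(ii), which the theorem's explicit feasibility hypothesis supplies.
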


Theorem~\ref{thm:GIL-dominance} establishes that when the recover-then-optimize solution is compatible with $\mathcal{GIL}$'s feasibility structure, $\mathcal{GIL}$ provides equal or better observational fit while maintaining the same constraint satisfaction level. The conditioning reflects that forward-optimal solutions may have accidental activity patterns (e.g., constraints that are active but not strictly slack away from the boundary) that violate $\mathcal{GIL}$'s exact cardinality and strict slack requirements.

\begin{proposition}[Scalability]\label{prop:scalability}
\begin{center}
\begin{tabular}{lccc}
\toprule
Model & Continuous Vars & Binary Vars & Dependence on $K$ \\
\midrule
$\mathcal{ICO}$ & $O(Kn + Km)$ & 0 & Linear \\
$\mathcal{IL}$ & $O(n + m)$ & 0 & None \\
$\mathcal{GIL}$ / $\mathcal{MGIL}$ & $O(n + m)$ & $m$ & None \\
\bottomrule
\end{tabular}
\end{center}
\end{proposition}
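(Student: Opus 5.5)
The proposition is a counting statement: it tallies continuous variables, binary variables and the dependence on $K$ for each row of the table. The plan is to read the decision variables and constraint blocks directly off formulations \eqref{eq:ICO}, \eqref{eq:IL} and \eqref{eq:GIL}/\eqref{eq:MGIL}, taking $p=O(n)$ where needed. This holds in the linear case, $p=n$; in general we absorb $p$ into the stated orders, consistent with Theorem~\ref{thm:IL-complexity}. We reuse Theorem~\ref{thm:IL-complexity} for the first two rows and Proposition~\ref{prop:GIL-complexity} for the third.

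\textbf{$\mathcal{ICO}$ row.} The variables are $\theta\in\mathbb{R}^p$, $z^k\in\mathbb{R}^n$ and $\lambda^k\in\mathbb{R}^m$ for $k=1,\dots,K$. This gives $p+Kn+Km=O(Kn+Km)$ continuous variables. No variable is integer-restricted, so there are $0$ binaries. Since every block except $\theta$ is replicated $K$ times, the size grows linearly in $K$.

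\textbf{$\mathcal{IL}$ row.} The variables are a single $z$, $\theta$ and $\lambda$, i.e.\ $n+p+m=O(n+m)$, again with no binaries. No variable or constraint is indexed by $k$. The data enter only through the objective, which by Proposition~\ref{prop:aggregation} depends on $\mathcal{X}$ only via $\bar x$. Hence the size has no dependence on $K$.

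\textbf{$\mathcal{GIL}$/$\mathcal{MGIL}$ row.} The continuous variables are $(z,\theta,\lambda)$, so $O(n+m)$. The binaries are $v\in\{0,1\}^m$, exactly $m$ of them. Checking each constraint family, \eqref{eq:GIL-stat}--\eqref{eq:GIL-domain} and the $\mathcal{MGIL}$ additions \eqref{eq:MGIL-inherit} and \eqref{eq:MGIL-increment}, shows every family is indexed by $i\in\{1,\dots,m\}$ or is a single vector or scalar constraint, never by $k$. The objective again reduces to $\omega K\|z-\bar x\|^2$ plus a constant plus the $v$-term via \eqref{eq:aggregation}. Thus there is no $K$-dependence. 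If the indicator constraints \eqref{eq:GIL-active} are linearized as in Remark~\ref{rem:big-M}, no auxiliary variables are introduced.

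There is no real obstacle here. The only point needing care is the convention that $p$ is absorbed into $O(n+m)$, and noting that the constant $\sum_k\|x^k-\bar x\|^2$ in the objective does not count toward model size.
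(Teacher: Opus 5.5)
Your proposal is correct and follows essentially the same route as the paper. The paper gives no standalone proof of this table; it relies on the same direct count of variables and constraints used in its proof of Theorem~\ref{thm:IL-complexity} and in Proposition~\ref{prop:GIL-complexity}, with the $m$ binaries read off as $v\in\{0,1\}^m$. Your remark that the $O(n+m)$ entries hold only after absorbing $p$ is the convention needed to make the statement literally true. Note that this is not quite ``consistent with'' Theorem~\ref{thm:IL-complexity}, since that theorem and Table~\ref{tab:complexity} keep $p$ as a separate term.
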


The Goal-Integrated Inverse Learning framework provides principled mechanisms to navigate the observation-constraint tradeoff:
(1)~$\mathcal{GIL}$ enables direct control over constraint satisfaction level via cardinality parameter $r$, supporting parallel exploration without monotonicity guarantees.
(2)~$\mathcal{MGIL}$ enables sequential navigation with guaranteed monotonicity (Theorem~\ref{thm:MGIL-monotone}) and nested face structure (Theorem~\ref{thm:MGIL-face}), providing interpretable marginal costs for each additional constraint.
(3)~Both maintain $K$-independence, enforce complete complementarity patterns (yielding forward-optimal solutions under Slater's condition), and characterize full parameter sets $\Theta^*(z^*)$.
The choice between $\mathcal{GIL}$ and $\mathcal{MGIL}$ depends on application requirements: $\mathcal{GIL}$ for broad exploration, $\mathcal{MGIL}$ for incremental, interpretable transitions.

\section{Numerical Experiments} \label{Section:NumericalEx}

We evaluate the proposed Inverse Learning framework, specifically $\mathcal{IL}$ \eqref{eq:IL}, $\mathcal{GIL}$ \eqref{eq:GIL}, and $\mathcal{MGIL}$ \eqref{eq:MGIL}, against the classical inverse linear optimization benchmark ($\mathcal{ILO}$) \eqref{eq:ILO}. The experiments are designed to assess three dimensions of performance: (i) solution accuracy (proximity to the true optimal solution), (ii) parameter recovery rates (correct identification of the true cost vector cone), and (iii) computational efficiency, under controlled conditions across varying noise levels and constraint binding configurations.

\subsection{Experimental Design}

\paragraph{\bf Instance Generation.}
We generate random linear programming instances in $\mathbb{R}^n$ with $n=10$. For each instance, a polyhedral feasible set $\Omega = \{x \in \mathbb{R}^n : Ax \geq b,\; x \geq 0\}$ is constructed with randomly generated constraint matrices, ensuring full-dimensional and non-empty feasible regions. Decision variables are bounded in $[-10, 10]^n$. Following Assumption~\ref{assump:constraint-hierarchy}, the non-negativity constraints form the trivial set $\mathcal{T}$, while the structural constraints $Ax \geq b$ constitute the relevant set $\mathcal{R}$.

\paragraph{\bf Data Generation.}
The $\mathcal{IL}$ framework and classical $\mathcal{IO}$ rest on distinct modeling assumptions regarding the data-generating process. To provide a fair evaluation, we employ two data generation procedures, illustrated in Figure~\ref{fig:ex2_assumptions}.

\begin{enumerate}
\item \textit{IL Assumption Scenario} (consistent with Assumption~\ref{assump:IL-data}):
A true optimal solution $x^* \in \Omega$ is randomly selected from the boundary of $\Omega$ associated with relevant constraints. A corresponding true parameter vector $\theta^*$ is determined such that $x^*$ is optimal for $\mathcal{FO}(\theta^*, \Omega)$; note that $\theta^*$ need not be unique (Theorem~\ref{thm:nonidentifiability-lp}). Observations $\mathcal{X} = \{x^k\}_{k=1}^K$ (with $K$ randomly chosen between 2 and 8) are generated as $x^k = x^* + \xi^k$, where $\xi^k \sim \mathcal{N}(0, \sigma^2 I)$.

\item \textit{IO Assumption Scenario} (the classical $\mathcal{IO}$ setting):
A true parameter vector $\theta^*$ is randomly generated. The forward problem $\mathcal{FO}(\theta^*, \Omega)$ is solved to find the optimal set $\Omega^{opt}(\theta^*)$. $K$ points $\{x^{*}_k\}_{k=1}^K$ are sampled from $\Omega^{opt}(\theta^*)$ (or the unique optimum is repeated $K$ times if the optimal face is a singleton). Observations are generated by adding noise: $x^k = x^{*}_k + \xi^k$, where $\xi^k \sim \mathcal{N}(0, \sigma^2 I)$.
\end{enumerate}

For both scenarios, three noise levels corresponding to standard deviations $\sigma$ of 0\%, 5\%, and 20\% relative to the diameter of the feasible set are tested. Observations may lie inside or outside $\Omega$.

\begin{figure}[htbp]
    \centering
    \begin{subfigure}[b]{0.45\textwidth}
        \includegraphics[width=\textwidth]{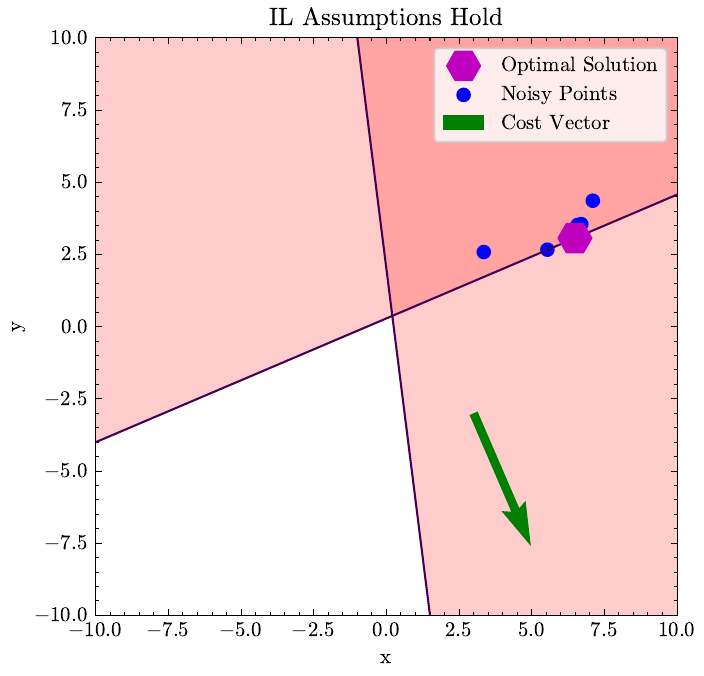}
    \end{subfigure}
    \hfill
    \begin{subfigure}[b]{0.45\textwidth}
        \includegraphics[width=\textwidth]{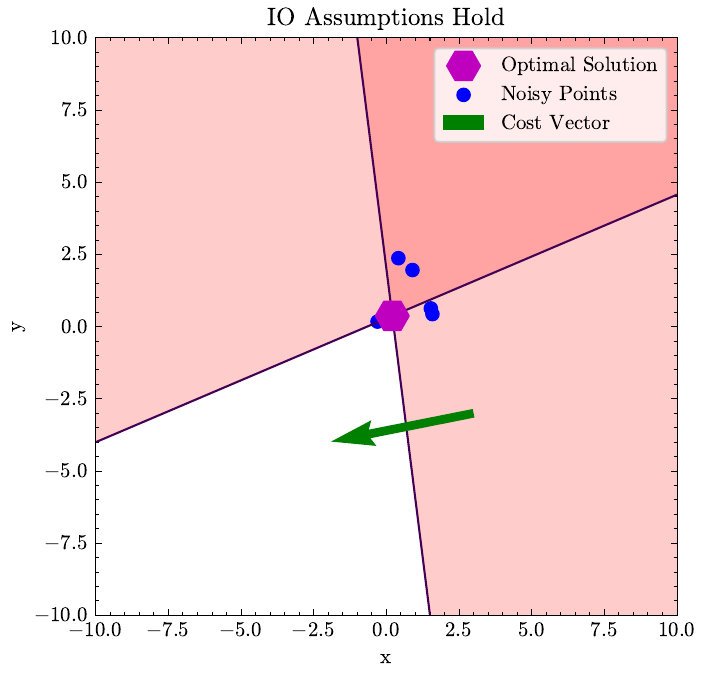}
    \end{subfigure}
    \caption{Data generation under the two experimental scenarios. (Left)~IL Assumption Scenario: noise is added to a single true optimal solution $x^*$, consistent with Assumption~\ref{assump:IL-data}. (Right)~IO Assumption Scenario: noise is added to potentially different optimal solutions $x^*_k$ for a single true parameter $\theta^*$.}
    \label{fig:ex2_assumptions}
\end{figure}

\paragraph{\bf Models and Parameters.}
We solve $\mathcal{IL}$, $\mathcal{ILO}$, $\mathcal{GIL}$, and $\mathcal{MGIL}$ for each generated instance. For $\mathcal{GIL}$ and $\mathcal{MGIL}$, we vary the cardinality parameter $r \in \{1, 2, \ldots, 10\}$ (Section~\ref{subsec:GIL}) and consider configurations both with and without randomly designated preferred constraints $\mathcal{P}$ (Assumption~\ref{assump:constraint-hierarchy}). We also vary the ``knowledge level''---that is, the number of true binding constraints known a priori---across values $\{1, 5, 10\}$ to test robustness to varying degrees of structural information.

\paragraph{\bf Performance Metrics.}
We evaluate performance using three metrics:

\emph{Solution Distance.} The $\ell_2$-norm distance between the learned solution ($z^*$ for $\mathcal{IL}$ models; $z_{\text{IO}}$ obtained by solving $\mathcal{FO}$ with the recovered $\theta_{\text{IO}}$) and the true optimal solution.

\emph{Parameter Recovery Rate.} Since parameters are generically non-unique (Theorems~\ref{thm:nonidentifiability-lp}--\ref{thm:nonidentifiability-convex}), we define recovery success as the event that the true parameter $\theta^*$ lies within the \emph{cone} of parameters compatible with the recovered solution. For $\mathcal{GIL}$/$\mathcal{MGIL}$, this is $\Theta^*(z^*) = \{\theta \in \Theta : A(z^*)\theta \in -N_\Omega(z^*)\}$ (Theorem~\ref{thm:GIL-param-set}). For $\mathcal{IL}$/$\mathcal{ILO}$, the analogous cone associated with the recovered solution's active set is used.

\emph{Computational Time.} Average wall-clock time per instance.

For each configuration (noise level, knowledge level, binding parameter), 100 independent instances are generated and solved. All optimization models are implemented in Gurobi 11.0 via Python. The bilinear formulation \eqref{eq:ILO} is solved using a decomposition approach for tractability, consistent with the discussion in Remark~\ref{rem:ICO-complexity}.

\subsection{Results and Discussion}

\subsubsection{Solution Accuracy.}
Figure~\ref{fig:ex2_distances} presents the distribution of distances between learned solutions and the true optimal solution under the $\mathcal{IL}$ Assumption Scenario for $n=10$, across varying noise levels (columns) and true solution binding levels (rows). Five models are compared: $\mathcal{IL}$, $\mathcal{GIL}$ with $r \in \{5, 10\}$, $\mathcal{MGIL}$ with $r \in \{5, 10\}$, and the classical benchmark $\mathcal{ILO}$.

Three findings emerge. First, the $\mathcal{IL}$ framework models consistently produce solutions closer to the true $x^*$ than the benchmark $\mathcal{ILO}$. This advantage is most pronounced when the true solution activates relatively few constraints (Binding $= 1$ and Binding $= 5$), where $\mathcal{IL}$ achieves near-zero distances while $\mathcal{ILO}$ produces solutions far from observations. Second, as $r$ increases in $\mathcal{GIL}$ and $\mathcal{MGIL}$, the solution distance generally increases, quantitatively demonstrating the observation-constraint tradeoff formalized in Definition~\ref{def:tradeoff}. This confirms the theoretical prediction: forcing more relevant constraints to bind (higher $r$) moves the solution further from the data centroid. Third, when the true solution itself binds many constraints (Binding $= 10$), all models converge in performance, as the constrained feasible set narrows substantially. The $\mathcal{MGIL}$ sequence (Theorem~\ref{thm:MGIL-monotone}) exhibits the expected monotone distance increase, consistent with the nested face structure (Theorem~\ref{thm:MGIL-face}). Under the $\mathcal{IO}$ Assumption Scenario, the qualitative patterns are similar, though absolute distances are generally larger for all models. The $\mathcal{IL}$ models still provide solutions closer to the mean true solution compared to the extreme point typically returned by $\mathcal{ILO}$.

\begin{figure}[htbp]
    \centering
    \includegraphics[width=\textwidth]{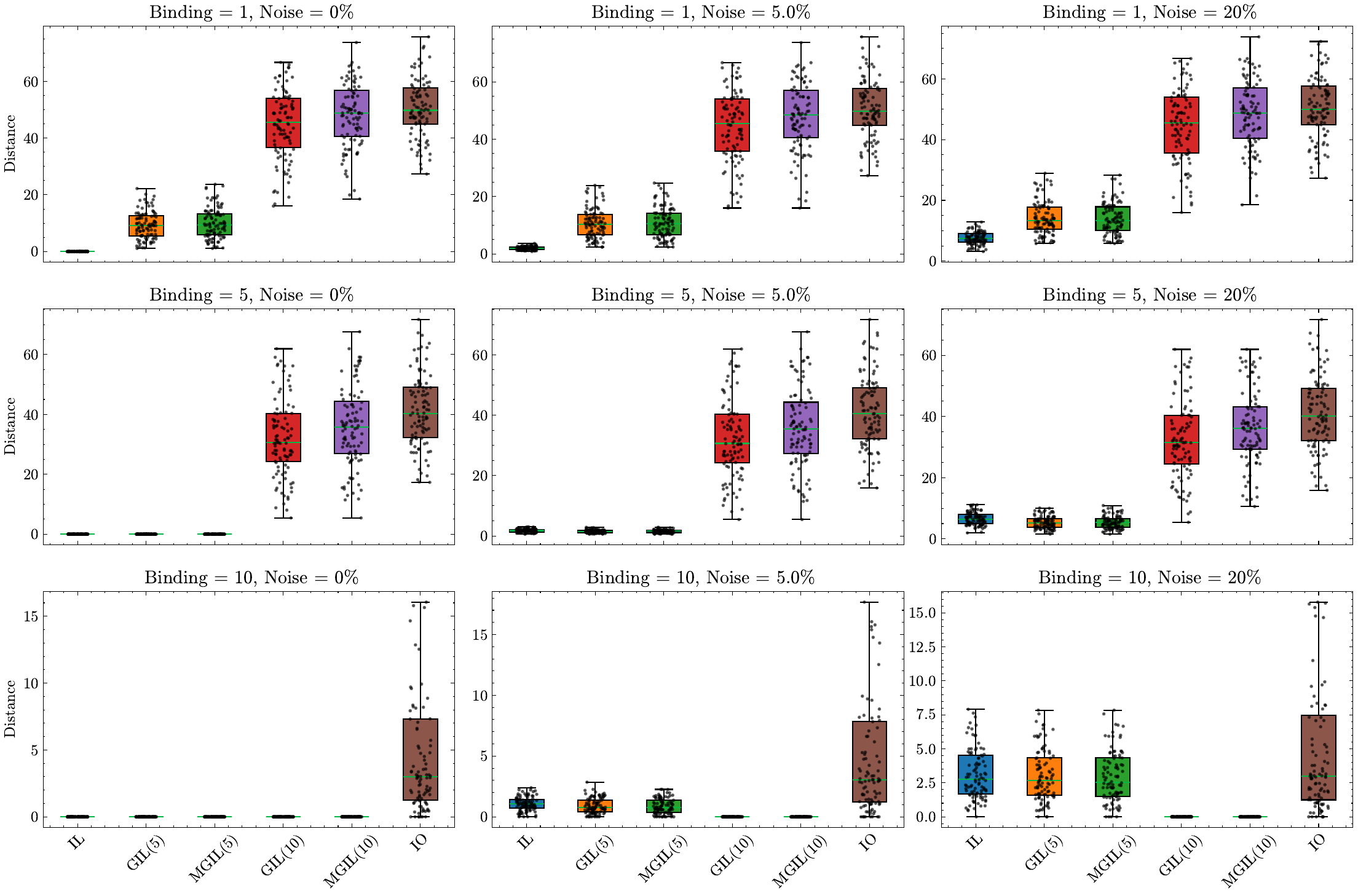}
    \caption{Solution distance comparison ($\ell_2$-norm to true solution) under the $\mathcal{IL}$ Assumption Scenario ($n=10$). Boxplots show distributions across 100 instances for varying noise levels (columns) and true solution binding levels (rows). Models include $\mathcal{IL}$, $\mathcal{GIL}(r)$, $\mathcal{MGIL}(r)$ with $r \in \{5, 10\}$, and the classical benchmark $\mathcal{ILO}$. The $\mathcal{IL}$ framework models consistently yield smaller distances than $\mathcal{ILO}$. Distance increases with $r$, illustrating the observation-constraint tradeoff.}
    \label{fig:ex2_distances}
\end{figure}

\begin{figure}[htbp]
    \centering
    \includegraphics[width=0.95\textwidth]{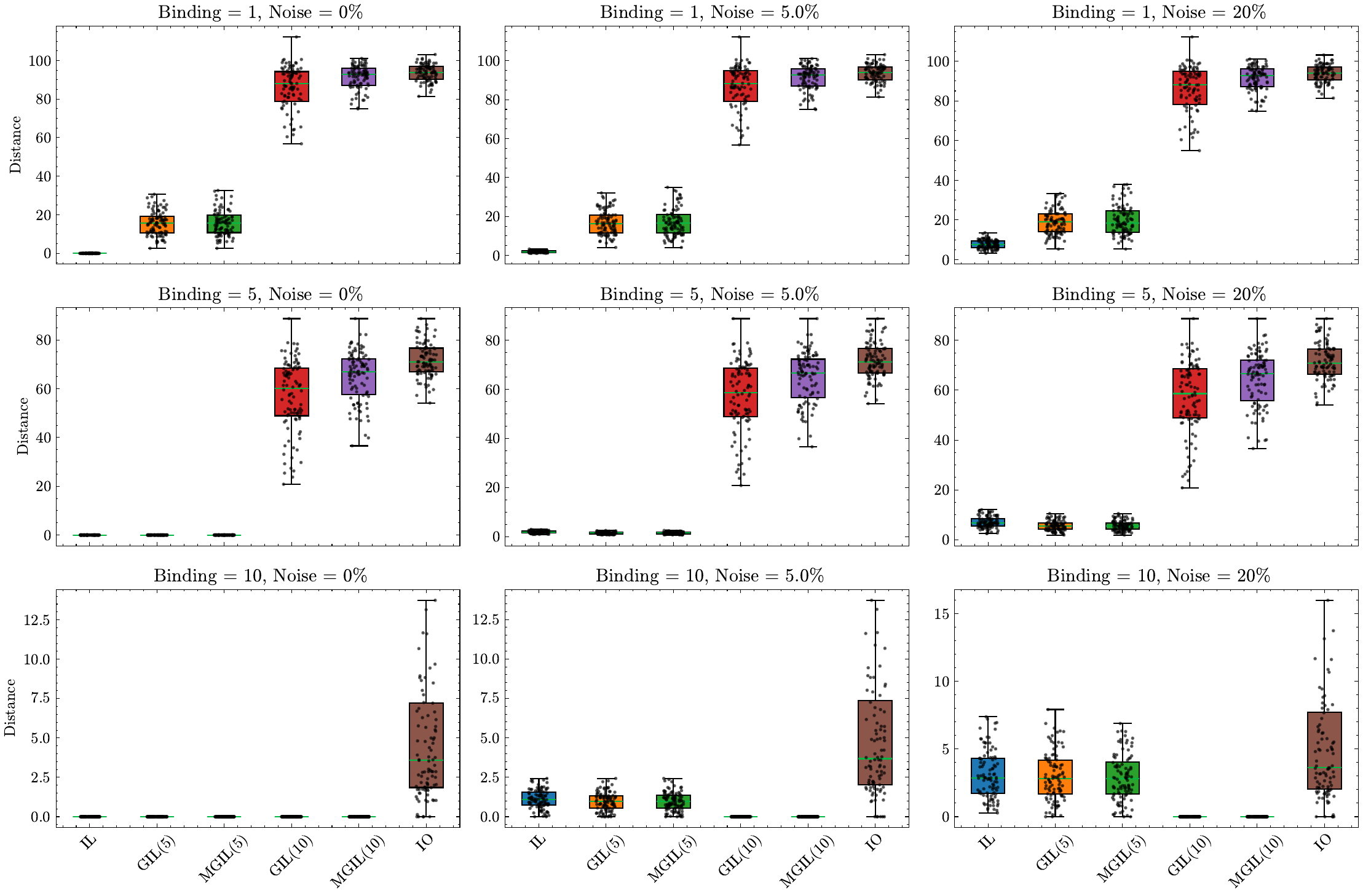}
    \caption{Solution distance comparison under the $\mathcal{IO}$ Assumption Scenario ($n=10$). Structure and interpretation mirror Figure~\ref{fig:ex2_distances}. $\mathcal{IL}$ framework models maintain their proximity advantage.}
    \label{fig:ILsubfig1cIO}
\end{figure}

\subsubsection{Parameter Recovery.}
Figures~\ref{FIGILsubfig1c}--\ref{fig:ex2_cIO_individual} present parameter recovery rates, i.e., the percentage of instances in which the true $\theta^*$ lies within the compatible cone $\Theta^*(z^*)$, as a function of the binding parameter $r$. We report results for $\mathcal{GIL}$ and $\mathcal{MGIL}$, both with preferred constraints (using knowledge of a subset of truly binding constraints) and without preferred constraints (denoted $\mathcal{GIL}$\_NK and $\mathcal{MGIL}$\_NK), alongside $\mathcal{IL}$ and $\mathcal{ILO}$ as flat baselines.

Several important patterns emerge. First, both $\mathcal{GIL}$ and $\mathcal{MGIL}$ achieve substantially higher recovery rates than $\mathcal{IL}$ and $\mathcal{ILO}$ as $r$ increases. At low knowledge levels (Knowledge $= 1$), recovery rates approach 100\% even at moderate $r$ values, confirming that the cardinality parameter directly addresses the non-identifiability established in Theorems~\ref{thm:nonidentifiability-lp}--\ref{thm:nonidentifiability-convex}: by forcing more constraints to bind, the compatible cone $\Theta^*(z^*)$ is narrowed, increasing the likelihood that it contains the true parameter. Second, models with preferred constraints ($\mathcal{GIL}$, $\mathcal{MGIL}$) consistently outperform their no-knowledge counterparts ($\mathcal{GIL}$\_NK, $\mathcal{MGIL}$\_NK), validating the constraint hierarchy mechanism of Assumption~\ref{assump:constraint-hierarchy}. Third, performance degrades with increasing knowledge level (i.e., when the true solution binds many constraints), because the combinatorial challenge of identifying the correct active set grows. Even in these harder regimes, $\mathcal{MGIL}$ with preferred constraints maintains a clear advantage.

Increasing noise degrades performance for all models, but the relative ranking persists. Notably, $\mathcal{MGIL}$ with preferred constraints is the most robust to noise, consistent with its sequential constraint activation mechanism: inherited constraints from previous iterations provide stable structure even as the centroid $\bar{x}$ shifts due to noise (Theorem~\ref{thm:MGIL-consistency}).

\begin{figure}[htbp]
    \centering
    \includegraphics[width=\textwidth]{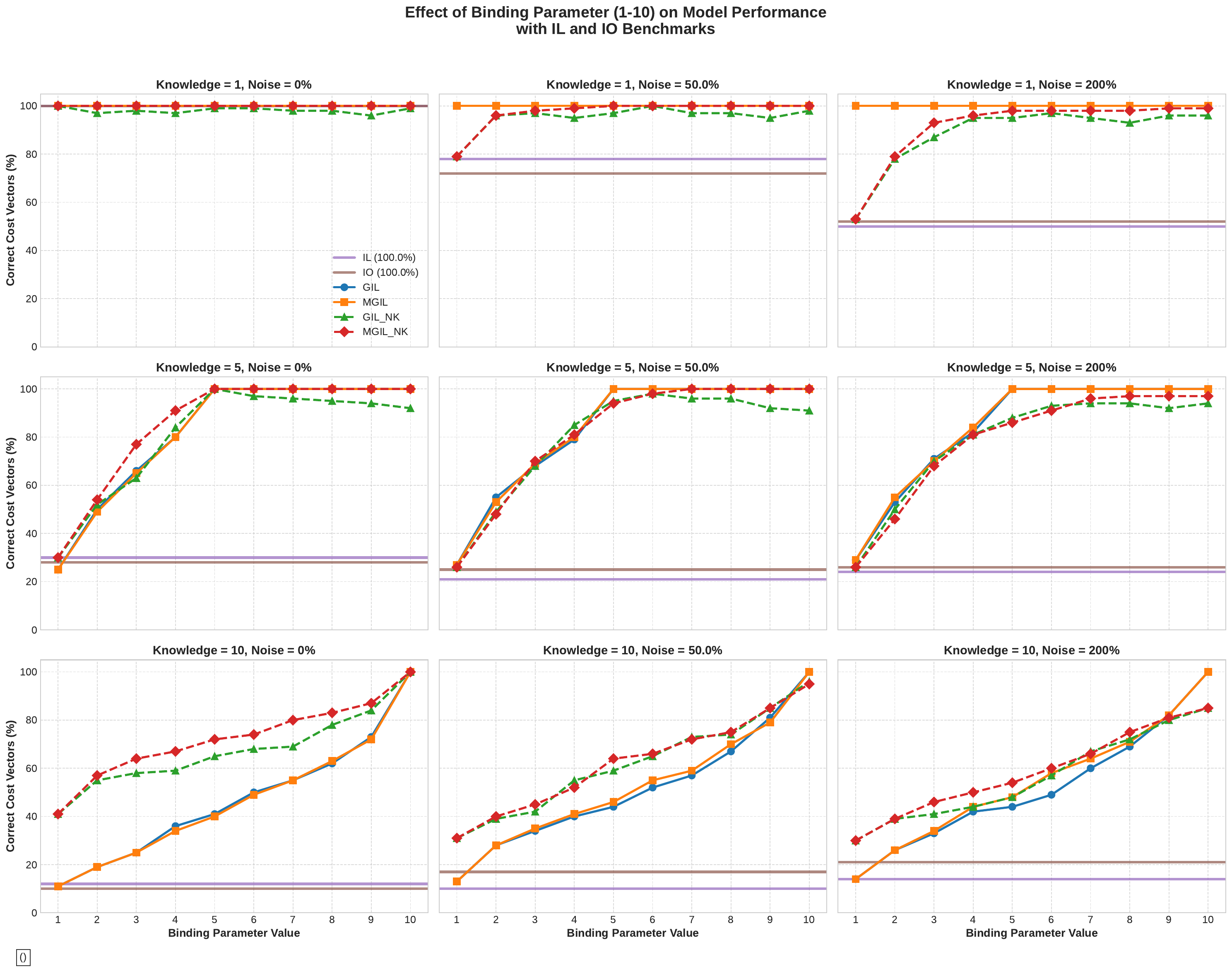}
    \caption{Parameter recovery rate as a function of the binding parameter $r$ across knowledge levels (rows) and noise levels (columns). Flat lines represent $\mathcal{IL}$ and $\mathcal{ILO}$ baselines (independent of $r$). The goal-integrated models $\mathcal{GIL}$ and $\mathcal{MGIL}$ (with and without preferred constraints, denoted ``NK'') achieve monotonically increasing recovery rates with $r$, substantially outperforming the baselines. Preferred constraint knowledge provides consistent additional benefit.}
    \label{FIGILsubfig1c}
\end{figure}

\begin{figure}[htbp]
    \centering
    \includegraphics[width=\textwidth]{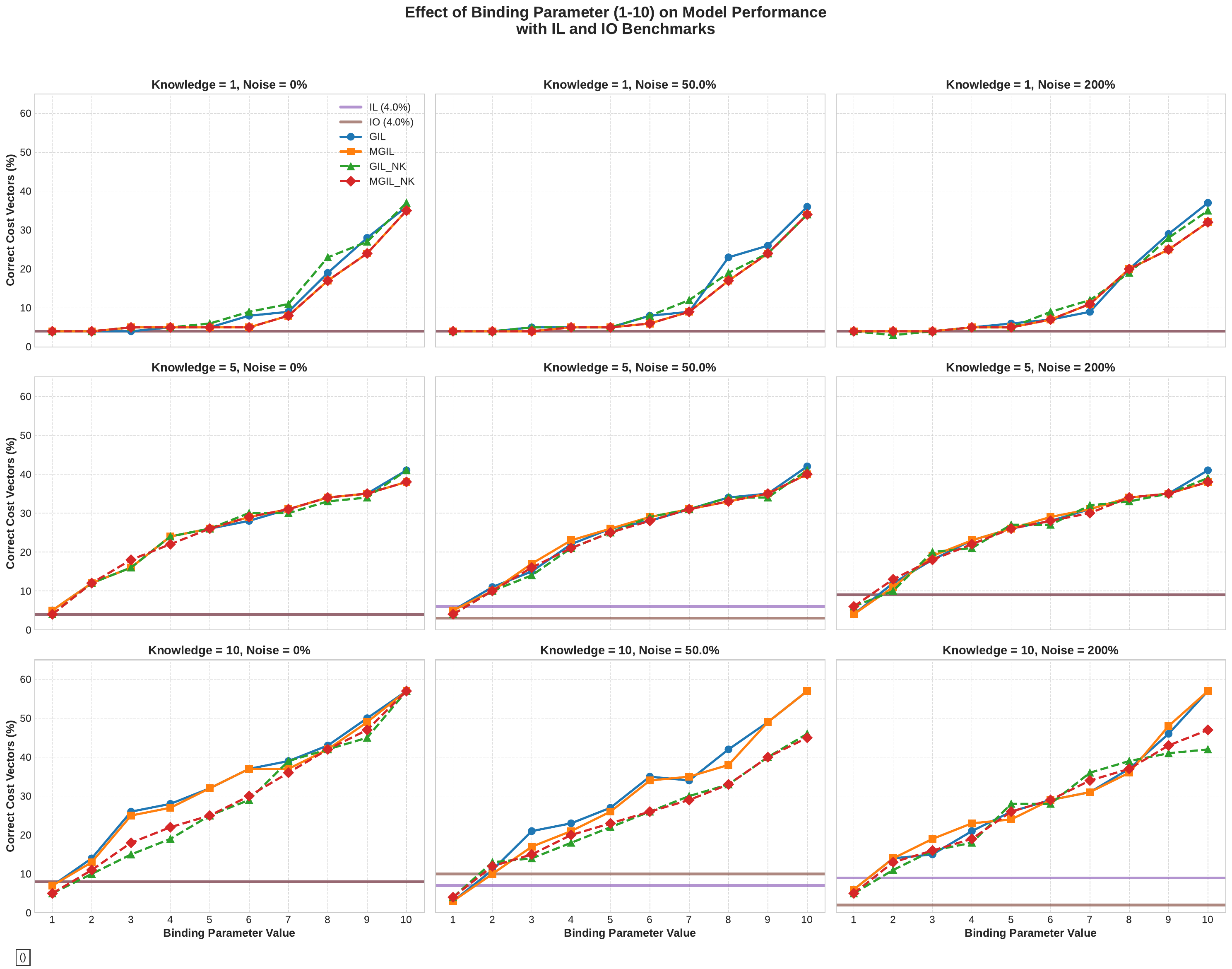}
    \caption{Parameter recovery rate under the $\mathcal{IO}$ Assumption Scenario. The $\mathcal{GIL}$/$\mathcal{MGIL}$ models maintain their advantage even under classical $\mathcal{IO}$ assumptions, demonstrating robustness to the data-generating mechanism.}
    \label{fig:ex2_cIO}
\end{figure}

\begin{figure}[htbp]
    \centering
    \includegraphics[width=\textwidth]{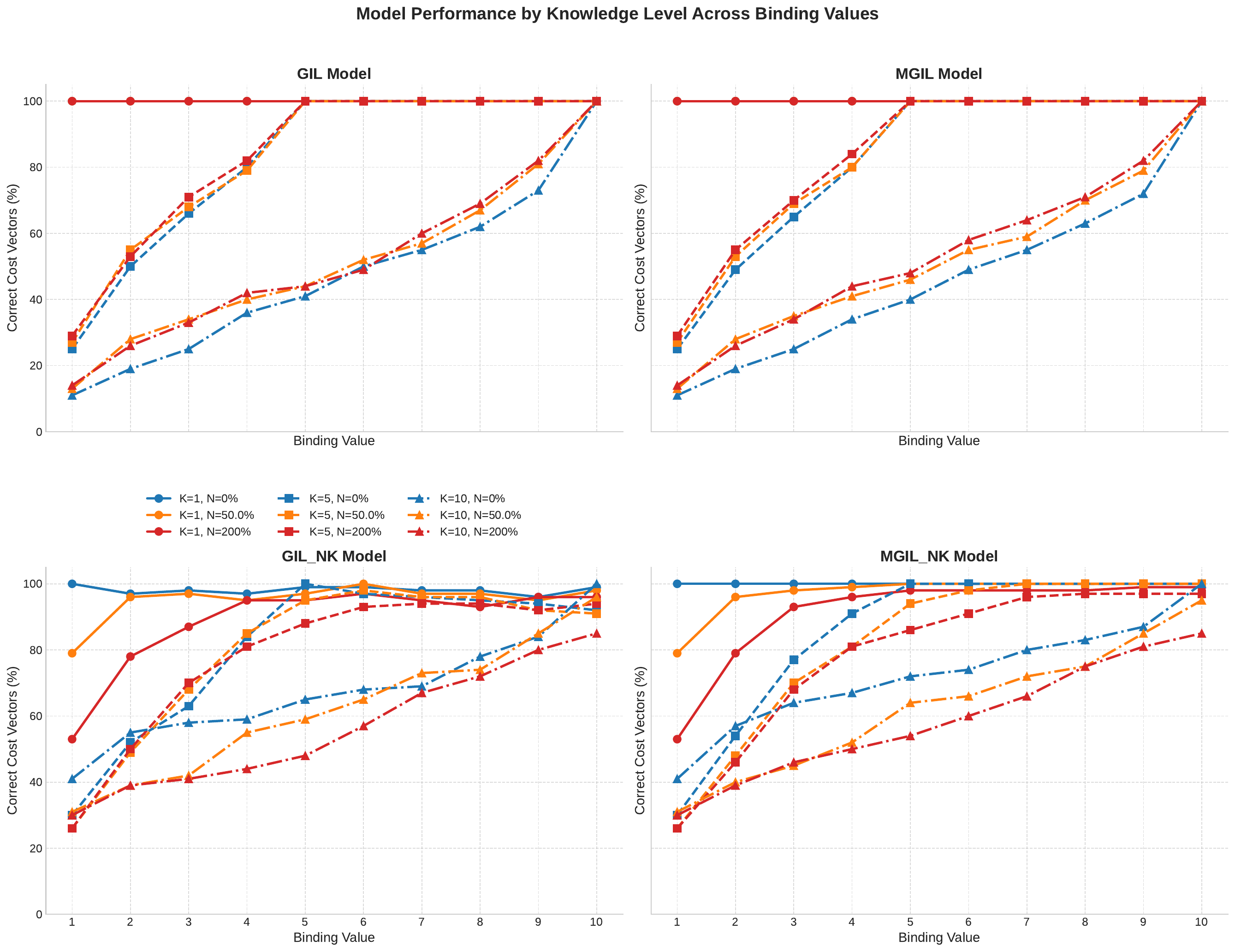}
    \caption{Disaggregated parameter recovery performance by model type. Each panel isolates one model variant, showing how recovery rates vary across knowledge levels ($K$) and noise levels ($N$). Models with preferred constraints (top row) exhibit higher recovery rates, especially at low binding values $r$, confirming the value of the constraint hierarchy (Assumption~\ref{assump:constraint-hierarchy}).}
    \label{fig:ex2_cIO_individual}
\end{figure}

Figure~\ref{fig:knowledge_comparison} compares all four model variants at a fixed noise level ($\sigma = 50\%$) across knowledge levels. When knowledge is low (Knowledge~$=1$), all models perform well. As the knowledge level increases (Knowledge~$=5, 10$), the gap between models with and without preferred constraints widens, and $\mathcal{MGIL}$ with preferred constraints dominates uniformly.

\begin{figure}[htbp]
    \centering
    \includegraphics[width=\textwidth]{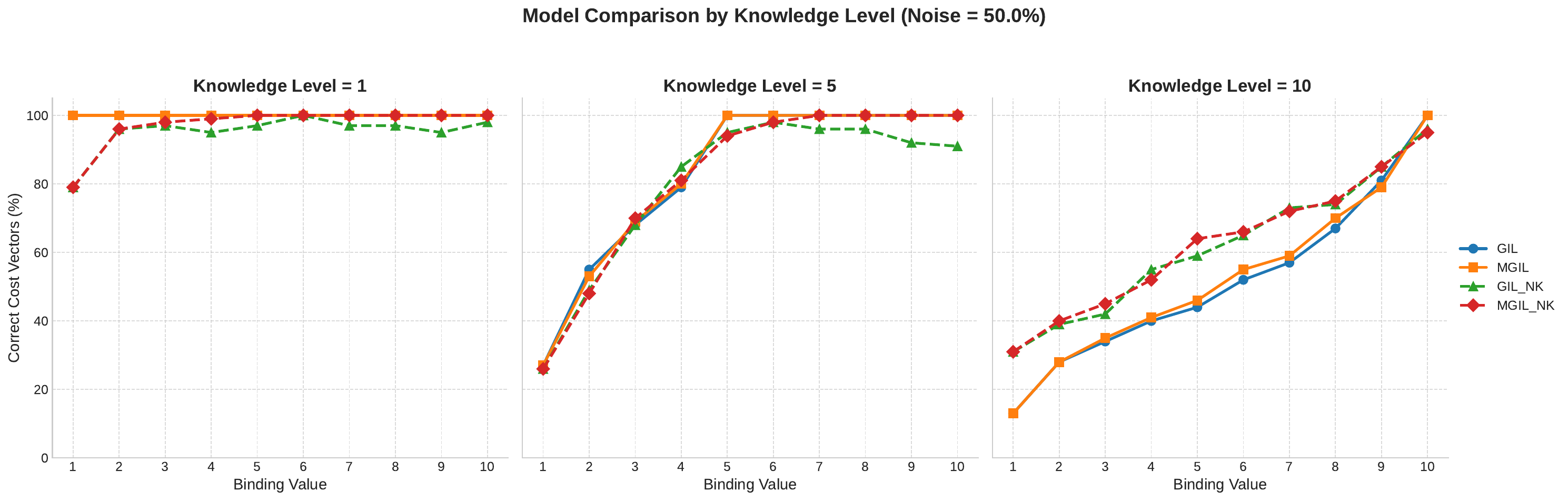}
    \caption{Model comparison by knowledge level at fixed noise ($\sigma = 50\%$). At low knowledge levels, all models perform comparably. At higher knowledge levels, the advantage of preferred constraints and sequential constraint activation ($\mathcal{MGIL}$) becomes pronounced.}
    \label{fig:knowledge_comparison}
\end{figure}

\subsubsection{Computational Efficiency.}
Table~\ref{Table:ex2comp} reports average solution times across 100 instances for $n=10$. $\mathcal{MGIL}$ is consistently the fastest among the goal-integrated models, followed by $\mathcal{GIL}$. The basic $\mathcal{IL}$ model is more expensive than the goal-integrated variants (which benefit from tighter feasible regions due to binding constraints), and the benchmark $\mathcal{ILO}$ (solved via LP decomposition) is significantly the slowest. This efficiency hierarchy is consistent with the complexity analysis in Theorem~\ref{thm:IL-complexity} and Proposition~\ref{prop:GIL-complexity}: the $\mathcal{IL}$ framework models have $O(n + m + p)$ variables independent of $K$, whereas $\mathcal{ILO}$ requires $O(Kn + Km)$ variables.

%\subsubsection{Summary.}
The numerical experiments validate the theoretical properties established in Sections~\ref{sec:IL}--\ref{sec:GIL}. The $\mathcal{IL}$ framework models, particularly $\mathcal{GIL}$ and $\mathcal{MGIL}$, offer three advantages over classical $\mathcal{ILO}$: (i) superior solution proximity to the true optimal solution, (ii) substantially higher parameter recovery rates when structural knowledge is incorporated through the binding parameter $r$ and preferred constraints $\mathcal{P}$, and (iii) significant computational speedups due to $K$-independence (Theorem~\ref{thm:IL-complexity}). These advantages are robust across data generation scenarios, noise levels, and structural configurations, confirming that the goal-integrated approach provides a principled mechanism for navigating the observation-constraint tradeoff (Definition~\ref{def:tradeoff}).

%===========================================================
%===========================================================

\section{Application: Personalized Diet Recommendations via Inverse Learning} \label{Section:Application}

We demonstrate the practical value of the Inverse Learning framework through a healthcare application: generating dietary recommendations for patients with hypertension. In this domain, expert guidelines such as the Dietary Approaches to Stop Hypertension (DASH) diet define nutritional targets that patients should satisfy. However, strict adherence is often poor due to the gap between guidelines and eating habits. The $\mathcal{IL}$ framework addresses this by treating observed dietary behavior as noisy observations of an individual's latent optimal diet (Assumption~\ref{assump:IL-data}), then navigating the tradeoff between preserving habitual patterns and satisfying nutritional constraints.

\subsection{Data Sources and Feasible Set Construction} \label{sec:app_data}

The application requires two inputs: observational data $\mathcal{X}$ reflecting dietary behavior, and a feasible region $\Omega$ encoding expert nutritional guidelines.

\paragraph{Observational Data.}
We use the National Health and Nutrition Examination Survey (NHANES) dataset \citep{CDC_2020}, which contains detailed two-day, self-reported dietary intake records for 9,544 individuals. From this cohort, we identify 2,090 individuals who self-reported a hypertension diagnosis, yielding 4,024 daily intake observations. These observations constitute $\mathcal{X}$.

\paragraph{Feasible Region.}
The feasible region $\Omega$ is defined using linear constraints derived from the DASH diet \citep{sacks2001effects, liese2009adherence}. Specifically, lower and upper bounds on 22 key nutrients (sodium, potassium, fats, carbohydrates, fiber, protein, etc.) are established based on DASH guidelines and tailored to demographic clusters defined by age and gender. These nutritional bounds form the relevant constraints $\mathcal{R}$ (Assumption~\ref{assump:constraint-hierarchy}). Nutrient content per serving is obtained from the USDA FoodData Central database \citep{usda_2019}; approximately 5,000 detailed food types are aggregated into 38 food groups (Electronic Companion Table~\ref{Table:food_groups}), defining the constraint coefficient matrices. Box constraints (0--8 servings per food group per day) form the trivial constraints $\mathcal{T}$. Sample nutrient bounds and food group coefficients for one demographic group are shown in Table~\ref{Table:SubsetNutrients}.

\paragraph{Focus Population.}
For detailed analysis, we focus on a subgroup of 230 female patients aged 51+ reporting both hypertension and pre-diabetes, yielding 460 daily observations. Descriptive statistics for this subgroup are presented in Table~\ref{Table:Diet_observations}. A comparison with DASH bounds reveals substantial non-adherence: average sodium intake (3,413~mg) exceeds the DASH upper limit (2,300~mg), with over 70\% of observations violating this constraint. This gap exemplifies the practical challenge addressed by the Goal-Integrated framework: bridging observed behavior and expert guidelines through the observation-constraint tradeoff.

\subsection{Retrospective Diet Recommendations} \label{sec:app_results}

We apply the $\mathcal{IL}$ framework retrospectively to the NHANES data. Under Assumption~\ref{assump:IL-data}, each patient's observed intake is treated as a noisy measurement of their latent preferred diet, and the relevant constraints encode DASH nutritional targets.

We use the $\ell_1$-norm for the distance metric in the $\mathcal{IL}$ objectives. To generate sufficient variability for multi-observation analysis, 20 noisy perturbations are generated around each of the 460 real observations, creating input sets for the models. The $\mathcal{IL}$ model \eqref{eq:IL} is solved for each set, yielding a recommended diet $z^*$ that represents the minimal adjustment from the observed pattern required to achieve forward optimality while satisfying DASH constraints. Figure~\ref{Fig:IO_optimalfoodintakes} compares the distribution of recommended food group intakes against the original observations. The $\mathcal{IL}$ recommendations maintain structural similarity to observed patterns while ensuring feasibility---a direct consequence of the centroid projection property (Proposition~\ref{prop:IL-geometry}).

The $\mathcal{MGIL}$ model \eqref{eq:MGIL} is then applied iteratively, starting from the $\mathcal{IL}$ solution and sequentially binding additional DASH nutritional constraints (Algorithm~\ref{alg:MGIL}). This generates a sequence of diets $z_0, z_1, \ldots, z_L$ corresponding to increasing values of active relevant constraints. Figure~\ref{Fig:GGIL_optimalnutrients} displays the resulting nutrient distributions for different levels of constraint activation ($r = 1, 2, 3, 4$). As $r$ increases, nutrient distributions tighten around DASH targets: sodium decreases toward the 2,300~mg upper limit, fiber increases toward the 25~g lower bound, and saturated fat decreases. This demonstrates the monotone distance sequence (Theorem~\ref{thm:MGIL-monotone}) in practice: each step increases adherence to nutritional guidelines at the cost of greater deviation from observed behavior.

\begin{figure}[htb]
\begin{center}
\includegraphics[width =1 \linewidth]{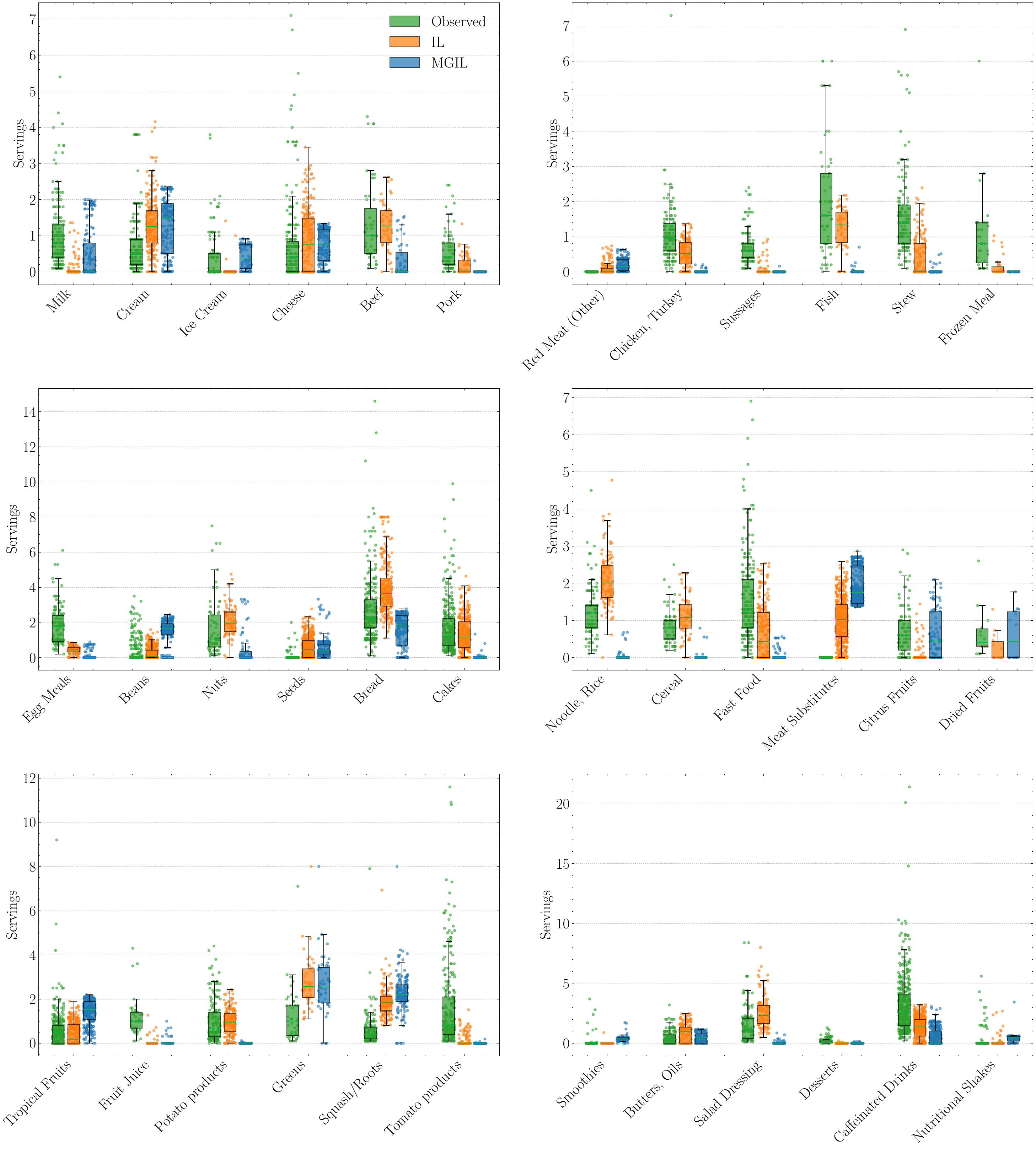}
\caption{Comparison of observed daily food intakes (left, green; $N=230$ patients) and recommended intakes from $\mathcal{IL}$ \eqref{eq:IL} (center, orange) and $\mathcal{MGIL}$ \eqref{eq:MGIL} (right, blue) applied to perturbed observations. The $\mathcal{IL}$ model preserves the structure of observed patterns while ensuring feasibility. $\mathcal{MGIL}$ with additional binding constraints produces further adjustments toward DASH guidelines.} \label{Fig:IO_optimalfoodintakes}
\end{center}
\end{figure}

\begin{figure}[htb]
\begin{center}
\includegraphics[width =1 \linewidth]{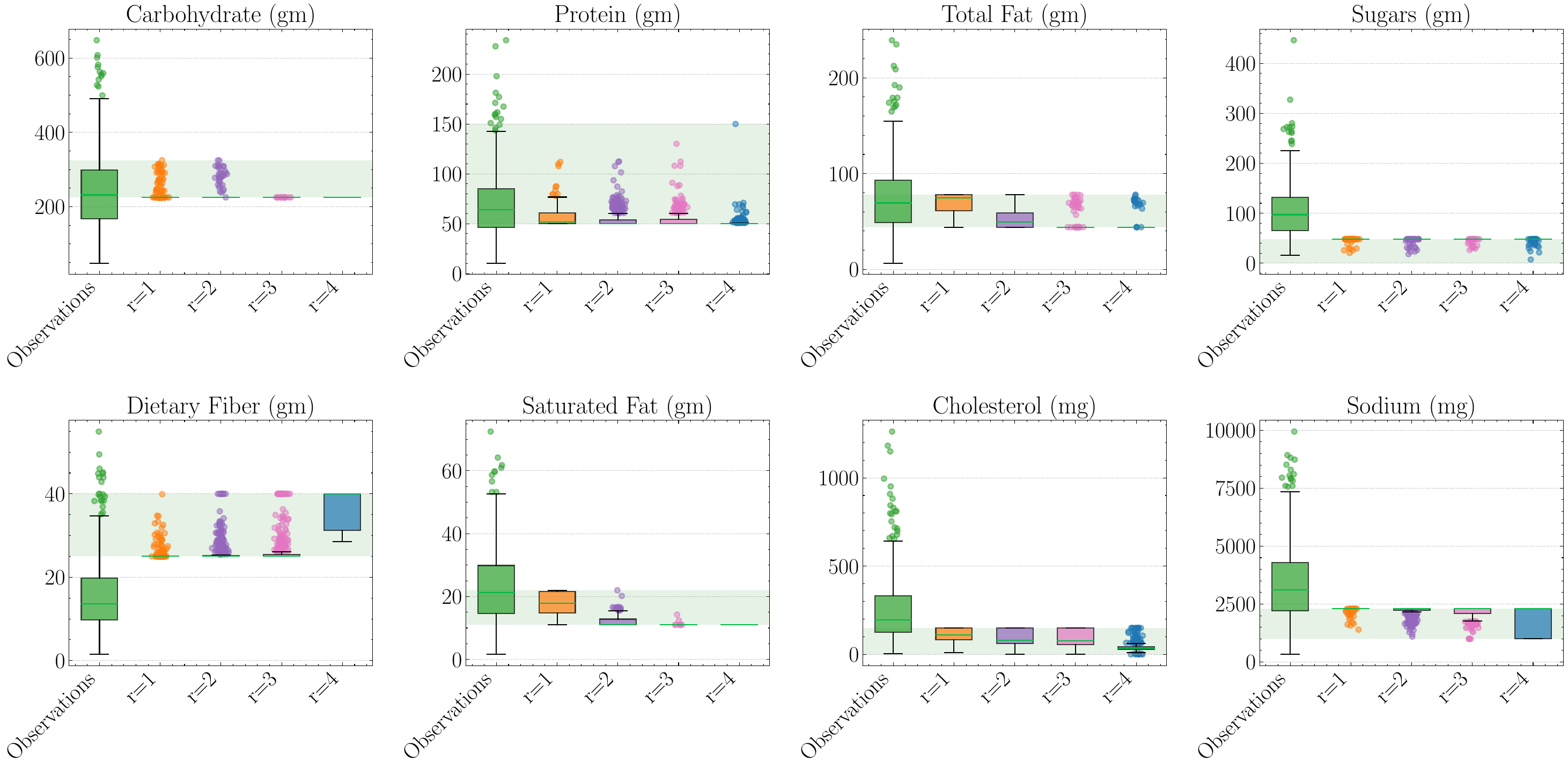}
\caption{Distribution of key nutrient levels in diets recommended by iteratively applying $\mathcal{MGIL}$ \eqref{eq:MGIL} for varying numbers of binding relevant constraints ($r$). Green shaded regions indicate DASH guideline bounds. As $r$ increases, nutrient distributions progressively align with targets (e.g., sodium tightens toward 2,300~mg), illustrating the observation-constraint tradeoff (Definition~\ref{def:tradeoff}).} \label{Fig:GGIL_optimalnutrients}
\end{center}
\end{figure}

These results demonstrate how the $\mathcal{GIL}$ framework empowers practitioners to select personalized diets across the tradeoff spectrum. A dietitian might start with the $\mathcal{IL}$ solution (minimal change from habitual behavior) and progressively tighten adherence via $\mathcal{MGIL}$, assessing at each step whether the marginal cost $\Delta D_\ell = D_{\ell+1} - D_\ell$ (Corollary~\ref{cor:MGIL-tradeoff}) in behavioral deviation is acceptable. The framework can be further customized by specifying preferred constraints $\mathcal{P}$ (e.g., prioritizing sodium reduction) within the $\mathcal{GIL}$ objective \eqref{eq:GIL-obj}.

\subsection{Prospective Feasibility Study}

To assess real-world applicability, we conducted a prospective feasibility study with a healthy volunteer seeking a low-carbohydrate, 2,000-kcal diet. The participant wore a continuous glucose monitor (CGM) for two weeks. Week~1 served as baseline (usual diet logged). During Week~2, recommendations generated by applying $\mathcal{MGIL}$ (with $r=3$, prioritizing sugar reduction) to the baseline data were provided. The participant retained full autonomy over food choices.

Post-recommendation CGM data (Figure~\ref{Fig:Volunteer_CGMcomparisons}, Table~\ref{Table:GlucoseStats}) indicated improved glycemic control: mean glucose decreased from 81.9 to 77.2~mg/dL, and the proportion of readings exceeding 100~mg/dL dropped from 6.6\% to 0.4\%. Nutritional analysis (Tables~\ref{Table:DailyNutrition}--\ref{table:study_results}, Figure~\ref{fig:nutrient_comparison}) confirmed substantial reductions in targeted nutrients (calories $-14.8\%$, sugar $-40.4\%$) while maintaining intake of others (fat, fiber), reflecting the personalized nature of the recommendations. Food group analysis (Figure~\ref{Fig:Volunteer_Recom}) indicated partial adherence, demonstrating both the model's ability to generate actionable recommendations and the practical complexities of dietary change.

\begin{figure}[htb]
\begin{center}
\includegraphics[width =0.45 \linewidth]{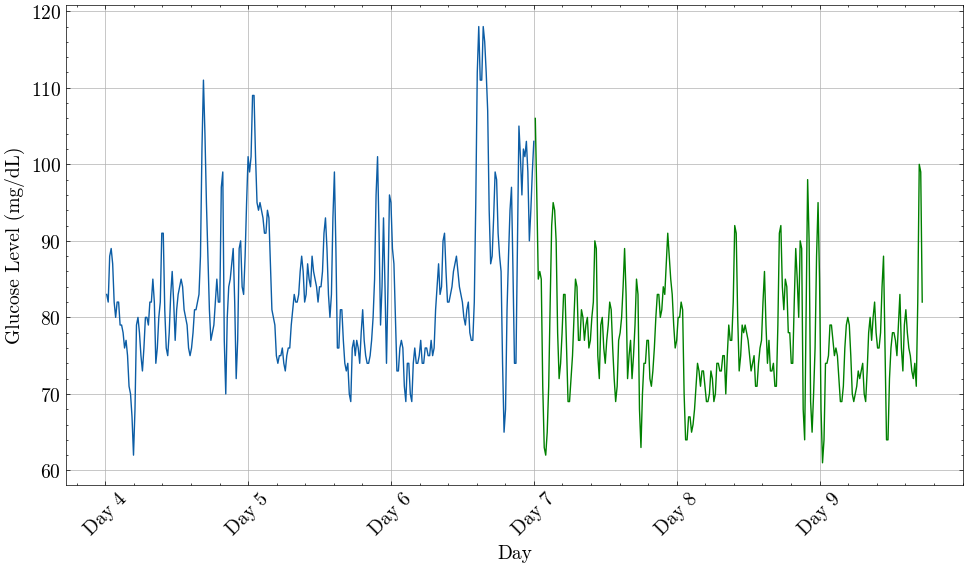}
\includegraphics[width =0.45 \linewidth]{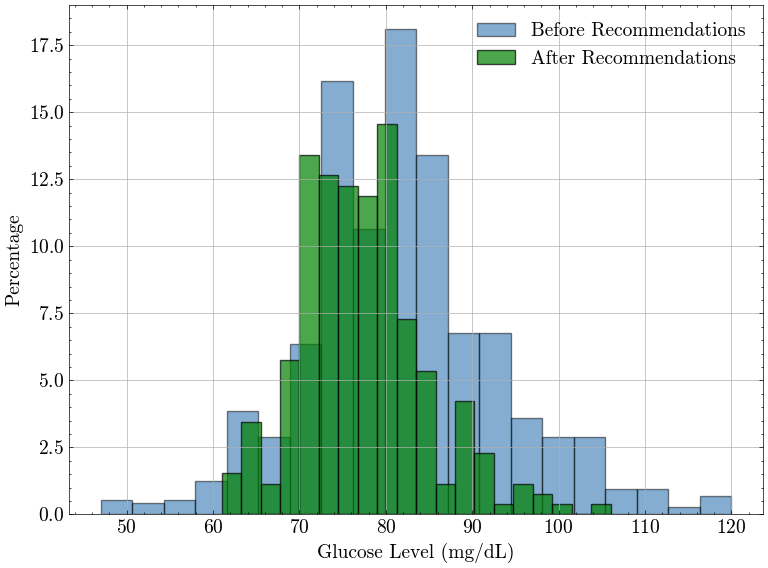}
\caption{Continuous glucose monitoring (CGM) data before (blue) and after (green) implementing $\mathcal{MGIL}$-based dietary recommendations. Left: time series of glucose readings. Right: histogram of glucose levels showing a shift toward lower, more concentrated values post-recommendation.}\label{Fig:Volunteer_CGMcomparisons}
\end{center}
\end{figure}

\begin{figure}[htb]
\begin{center}
\includegraphics[width =1 \linewidth]{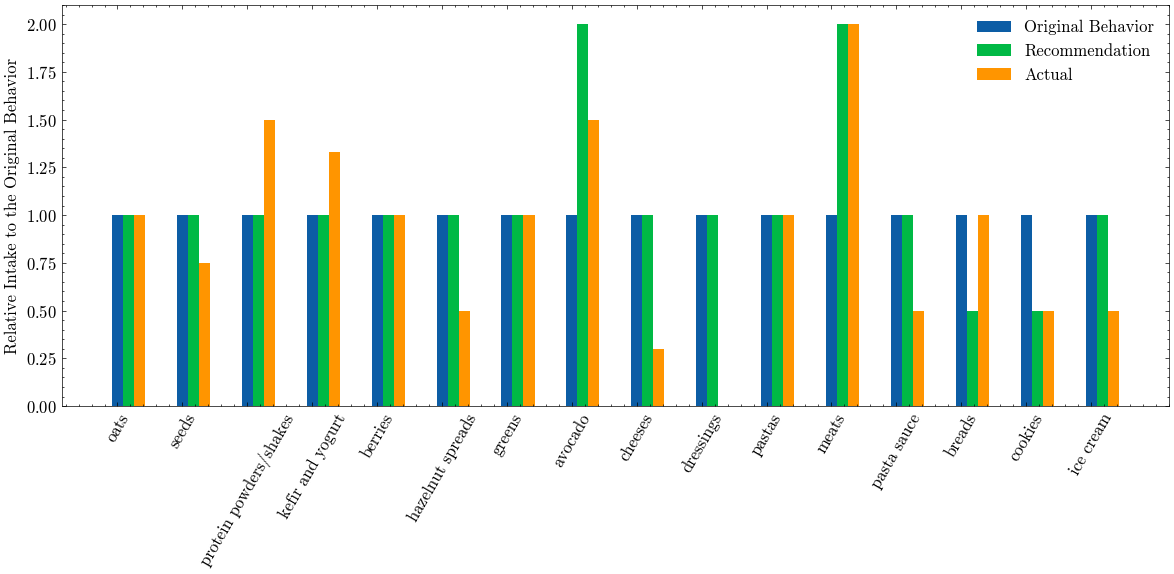}
\caption{Comparison of original dietary behavior (blue), recommended adjustments from $\mathcal{MGIL}$ (green), and actual post-recommendation behavior (orange) across food groups. Values are normalized relative to original behavior (1.0). Partial adherence is visible: the participant increased avocado and meat intake as recommended, while decreasing cheeses and hazelnut spreads beyond the recommendation level.} \label{Fig:Volunteer_Recom}
\end{center}
\end{figure}

\begin{figure}[htb]
\begin{center}
\includegraphics[width =0.46 \linewidth]{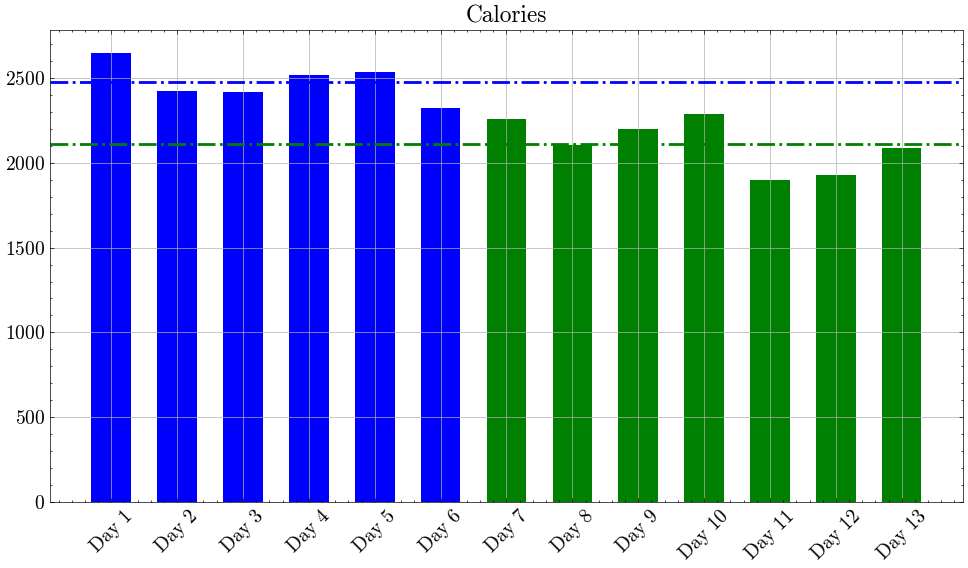}
\includegraphics[width =0.46 \linewidth]{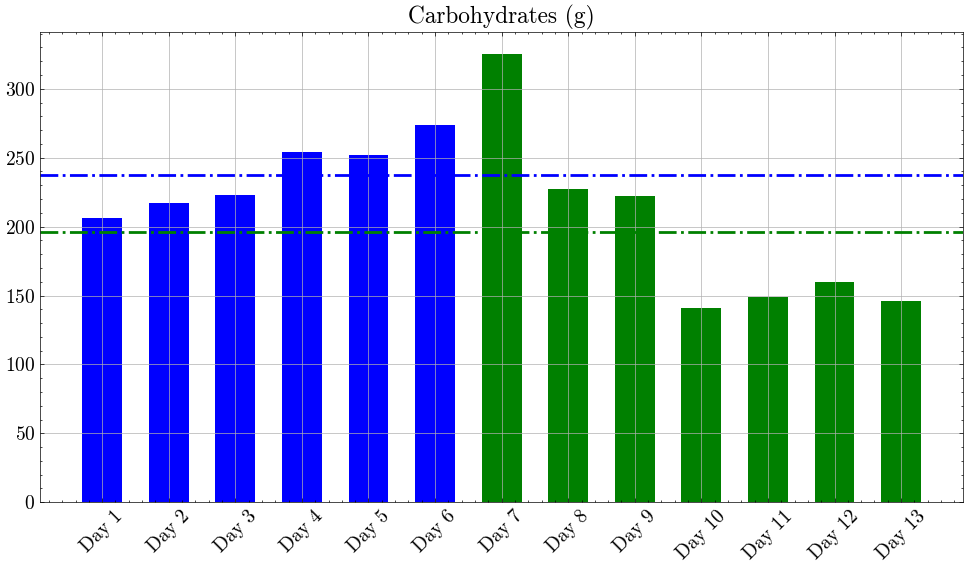}
\includegraphics[width =0.46 \linewidth]{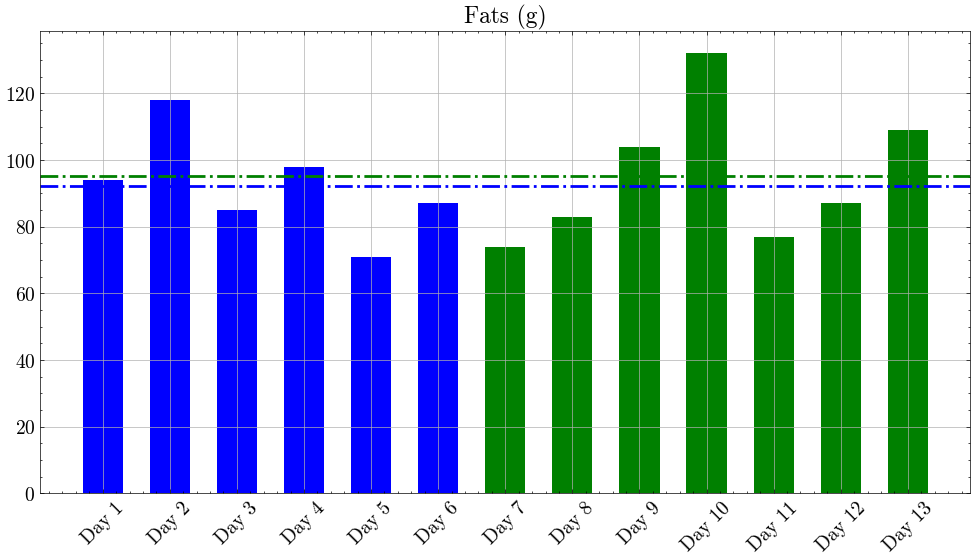}
\includegraphics[width =0.46 \linewidth]{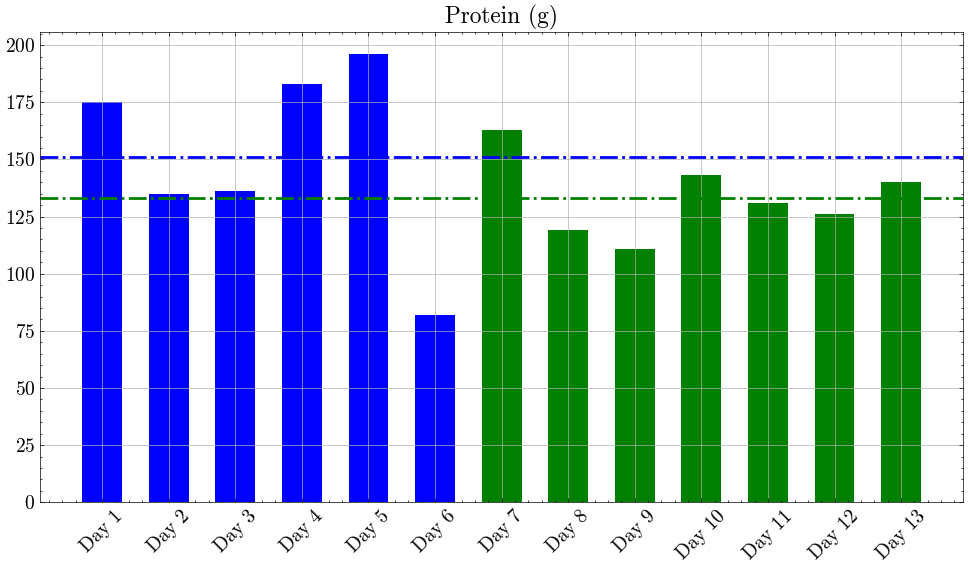}
\includegraphics[width =0.46 \linewidth]{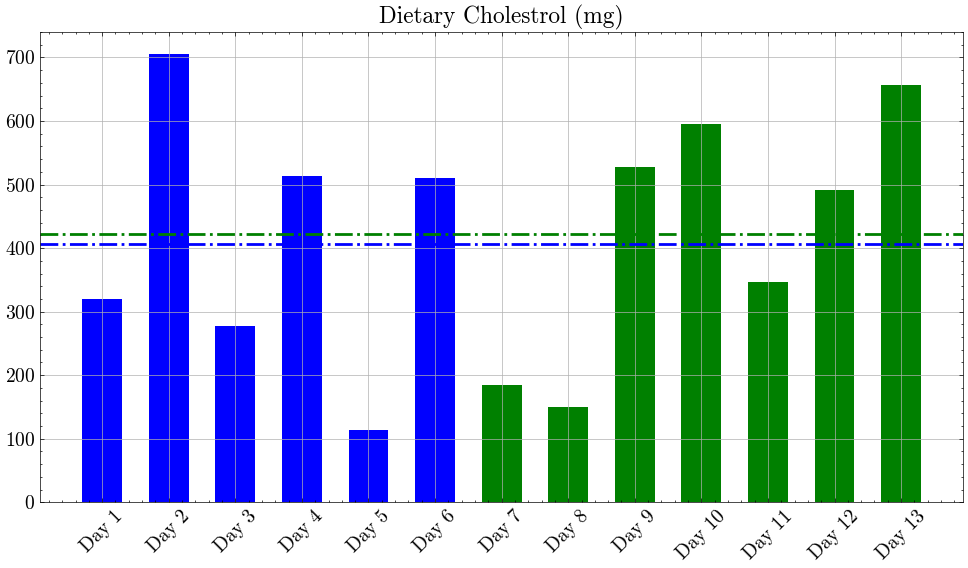}
\includegraphics[width =0.46 \linewidth]{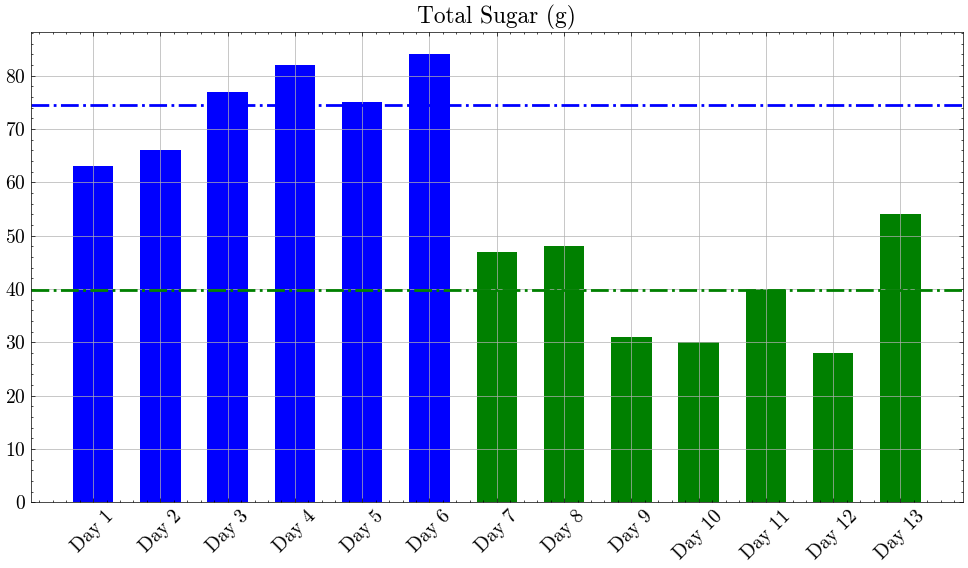}
\includegraphics[width =0.46 \linewidth]{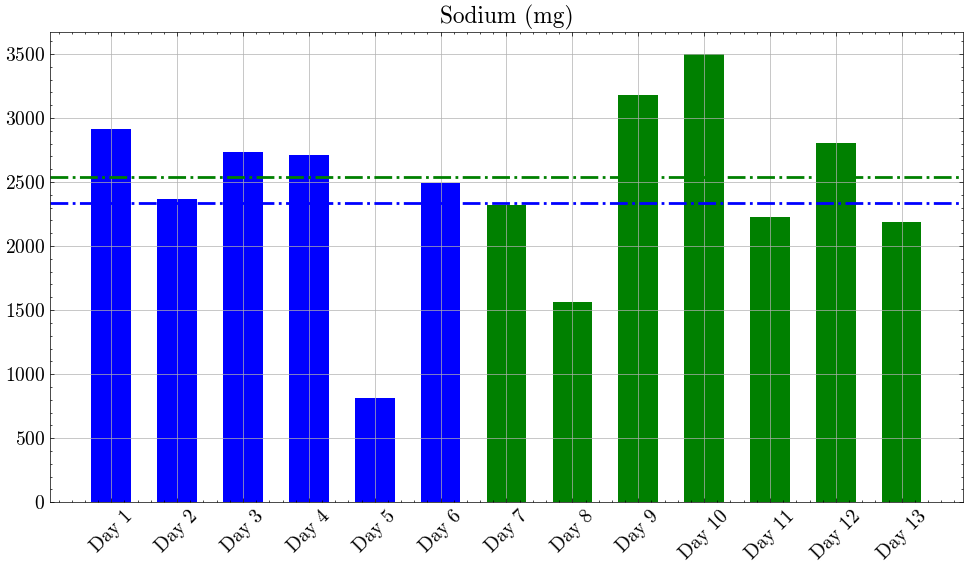}
\includegraphics[width =0.46 \linewidth]{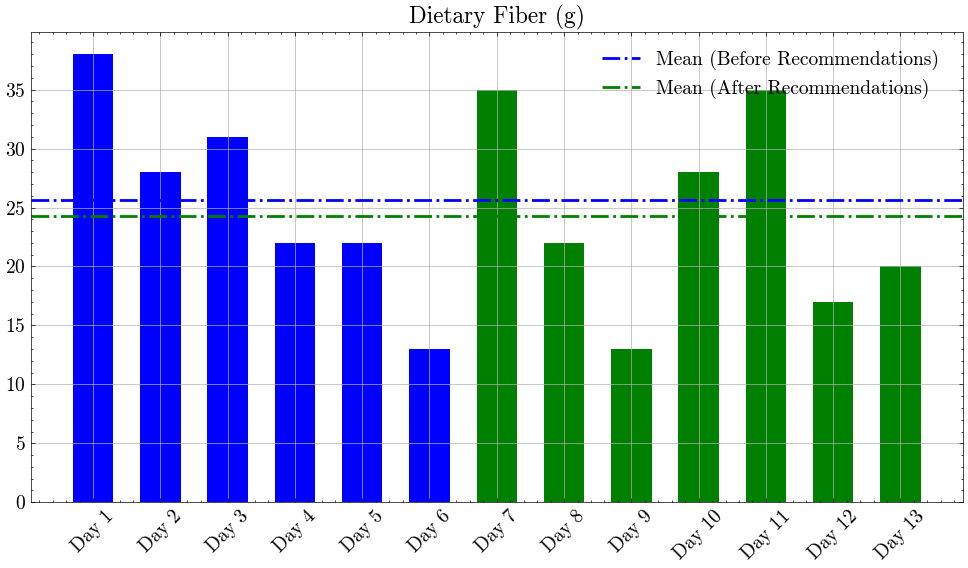}
\caption{Daily nutrient intake before (blue) and after (green) implementation of $\mathcal{MGIL}$-based dietary recommendations. Dashed lines indicate period means. Targeted nutrients (calories, carbohydrates, sugar) show clear reductions post-recommendation, while non-targeted nutrients (fat, fiber) remain relatively stable, demonstrating the personalized nature of the $\mathcal{MGIL}$ approach.}\label{fig:nutrient_comparison}
\end{center}
\end{figure}

This single-participant study has clear limitations (short duration, healthy volunteer, no control group) and should be interpreted as a proof-of-concept rather than a clinical trial. Nevertheless, the results provide preliminary evidence that the $\mathcal{IL}$ framework can generate personalized, actionable dietary recommendations that lead to measurable physiological improvements while respecting individual eating patterns. The framework's theoretical guarantees, namely consistency (Theorem~\ref{thm:MGIL-consistency}), monotone tradeoff navigation (Theorem~\ref{thm:MGIL-monotone}), and full parameter set characterization (Theorem~\ref{thm:GIL-param-set}), provide a rigorous foundation for the dietary recommendations, distinguishing this approach from ad hoc clinical heuristics. The nested face structure (Theorem~\ref{thm:MGIL-face}) ensures that as constraint adherence increases, all previously satisfied nutritional targets remain satisfied.

\section{Conclusion}
\label{sec:conclusion}

This paper develops a unified theoretical and computational framework for inverse optimization in parametric convex models. We show that parameter non-identifiability is structurally inherent in inverse convex optimization, establish conditions that separate identifiable from non-identifiable information, and introduce \emph{Inverse Learning} ($\mathcal{IL}$) as a scalable alternative to classical formulations. In $\mathcal{IL}$ (Section~\ref{sec:IL}), the inferential target shifts from recovering the unknown parameter $\theta_0$ to learning the latent optimal solution $z_0$ directly from data, mitigating non-identifiability while improving computational tractability. We further formalize the Observation-Constraint Tradeoff (Definition~\ref{def:tradeoff}) via \emph{Goal-Integrated Inverse Learning} (Section~\ref{sec:GIL}): $\mathcal{GIL}$ controls the number of binding relevant constraints through a cardinality parameter $r$, and $\mathcal{MGIL}$ provides sequential navigation with monotone distance guarantees (Theorem~\ref{thm:MGIL-monotone}) and nested face containment (Theorem~\ref{thm:MGIL-face}).

Numerical experiments (Section~\ref{Section:NumericalEx}) support the theory: $\mathcal{IL}$ recovers solutions closer to the true optimum than classical inverse linear optimization baselines while reducing runtimes by an order of magnitude, and goal-integrated variants improve parameter recovery when preferred constraint knowledge is available. A dietary case study (Section~\ref{Section:Application}) illustrates how the framework can bridge habitual behavior and DASH nutritional guidelines, with a prospective feasibility study providing preliminary evidence of improved glycemic control.

Several limitations remain. First, the current $\mathcal{IL}$ framework assumes observations are noisy realizations of a single latent optimal solution (Assumption~\ref{assump:IL-data}); this matches repeated decisions under stationary conditions but does not capture heterogeneous populations or time-varying preferences. When observations arise from multiple optima (e.g., across individuals or changing contexts), classical multi-observation formulations may be more appropriate, and the choice between $\mathcal{IL}$ and $\mathcal{ICO}$ should reflect the assumed data-generating process. Second, while $\mathcal{IL}$ and its goal-integrated extensions remove dependence on the number of observations $K$, the current formulations remain nonconvex due to bilinearities and complementarity constraints and are handled via mixed-integer reformulations (linear case) or local/heuristic methods (general convex case); further research is needed to develop scalable decomposition and cutting-plane methods, particularly for very large instances where $\mathcal{GIL}$ and $\mathcal{MGIL}$ introduce $m$ binary variables. Third, the dietary application is proof-of-concept: the prospective study involved a single healthy volunteer over a short duration without a control group, and further evaluation in larger randomized studies with target patient populations is needed to assess clinical efficacy.

Looking forward, several extensions would broaden applicability and strengthen statistical guarantees. The current work assumes a common latent optimum; extending $\mathcal{IL}$ to contextual and heterogeneous settings, where objectives or constraints depend on covariates, would enable personalization while preserving computational advantages, but requires new modeling and algorithms that share information across contexts beyond naive partitioning \citep{besbes2023contextual}. The centroid’s streaming property suggests online and adaptive variants with regret guarantees and change-detection for shifts in the latent optimum, potentially leveraging the sequential structure of $\mathcal{MGIL}$ for adaptive constraint-activation paths. For uncertainty quantification, $\Theta^*(z^*)$ is set-valued and not directly probabilistic; integrating conformal inverse optimization ideas could yield coverage-guaranteed uncertainty sets while retaining $K$-independent scalability \citep{lin2024conformal}. Finally, the current framework focuses on convex forward models; extending the non-identifiability analysis and goal-integrated mechanisms to nonlinear nonconvex and integer forward problems would expand relevance to combinatorial domains such as scheduling and routing, building on but going beyond existing inverse integer programming foundations \citep{schaefer2009inverse}.

\section*{Code and Data Availability}
\label{sec:code-data}

All code used in the numerical experiments (Sections~\ref{Section:NumericalEx}--\ref{Section:Application}) is available at \url{https://anonymous.4open.science/r/Dietary-Behavior-Dataset-946C/README.md}, along with a \texttt{README} file describing the computational environment, data preparation steps, and instructions for reproducing all reported results. The NHANES dietary intake data used in Section~\ref{Section:Application} are publicly available from the U.S. Centers for Disease Control and Prevention \citep{CDC_2020}. Nutrient composition data are publicly available from the USDA FoodData Central database \citep{usda_2019}. The interactive decision-support tools described in Section~\ref{sec:webpage} are accessible at \url{https://optimal-lab.com/nutrition-recommender/} and \url{https://optimal-lab.com/optimal-diet}. All optimization models were implemented using Gurobi~11.0 via Python; solver version and package dependencies are documented in the \texttt{README}.

\OneAndAHalfSpacedXI
\bibliographystyle{informs2014} 
\bibliography{IO_multipoint} 
\clearpage

\section{Tables}

\begin{table}[h]
\centering
\caption{Complexity comparison between classical inverse optimization and Inverse Learning.}
\label{tab:complexity}
\begin{tabular}{lcc}
\toprule
& $\mathcal{ICO}$ & $\mathcal{IL}$ \\
\midrule
Variables & $O(Kn + Km + p)$ & $O(n + m + p)$ \\
Constraints & $O(Kn + Km)$ & $O(n + m)$ \\
Dependence on $K$ & Linear & None \\
\bottomrule
\end{tabular}
\end{table}

\begin{table}[h]
\centering
\caption{Comparison of identifiability requirements for classical $\mathcal{IO}$ parameter recovery vs.\ $\mathcal{IL}$ solution recovery.}
\label{tab:identifiability-comparison}
\begin{tabular}{p{4cm}cc}
\toprule
\textbf{Requirement} & \textbf{Classical $\mathcal{IO}$} & \textbf{IL} \\
\midrule
Target of identification & Parameter $\theta_0$ & Solution $z_0$ \\
Normal cone dimension & Must be 1 & Any dimension \\
Excitation condition & $S \succ 0$ required & Not required \\
Local convexity of $\mathcal{Z}^*$ & Not explicitly required & Required locally \\
Achieves unique $\theta$ & Yes (under conditions) & Returns set $\Theta^*$ \\
Achieves unique $z$ & Not necessarily & Yes \\
\bottomrule
\end{tabular}
\end{table}

\begin{table}[h]
\footnotesize
\begin{center}\renewcommand{\arraystretch}{1.5}
        \caption{Average solution times (seconds per 100 instances, $n=10$) across models and data generation scenarios. $\mathcal{GIL}$/$\mathcal{MGIL}$ times reflect the maximum observed across tested $r$ values. The classical $\mathcal{ILO}$ formulation was solved via LP decomposition due to bilinear terms.}
    \label{Table:ex2comp}
    \begin{tabular}{>{\centering}p{0.2\textwidth}|>{\centering}p{0.15\textwidth}|>{\centering}p{0.15\textwidth}|>{\centering}p{0.15\textwidth}|>{\centering\arraybackslash}p{0.15\textwidth}}
    \hline\hline
    Data Scenario  &  $\mathcal{ILO}^*$  & $\mathcal{IL}$  & $\mathcal{GIL}$& $\mathcal{MGIL}$   \\
    \hline
    $\mathcal{IL}$ Assumptions & 46.85 &  14.75 & 6.23 & 3.58  \\
    $\mathcal{IO}$ Assumptions & 47.40 &  15.48 & 4.67 &  3.21  \\
    \hline\hline
    \end{tabular}
\end{center}
\footnotesize{$^*$Classical $\mathcal{ILO}$ \eqref{eq:ILO} solved via decomposition into a sequence of LPs. Otherwise $\mathcal{ILO}$ exceeded 10000s in many instances.}
\end{table}

\begin{table}[h]
\small
\begin{center}\renewcommand{\arraystretch}{1.2}
        \caption{DASH nutrient bounds (relevant constraints) for women, age 51+, and sample nutrient coefficients per serving.}
    \label{Table:SubsetNutrients}
    \begin{tabular}{>{\centering}p{0.19\textwidth}|>{\centering}p{0.1\textwidth}|>{\centering}p{0.1\textwidth}|>{\centering}p{0.08\textwidth}>{\centering}p{0.08\textwidth}>{\centering}p{0.08\textwidth}>{\centering\arraybackslash}p{0.19\textwidth}}
& \multicolumn{2}{c|}{\textbf{DASH Bounds}}  &  \multicolumn{4}{c}{\textbf{Nutrient Per Serving (Sample Foods)}} \\\cline{2-7}
\textbf{Nutrient}        & \textbf{Lower} & \textbf{Upper} & \textbf{Milk (244g)}   & \textbf{Stew (140g)}   & \textbf{Bread (25g)} & \textbf{Trop. Fruits (182g)}  \\
\hline \hline
Carbs (g) & 225$^*$      & 325          & 18.8  & 21.3  & 12.3 & 27.8     \\
Protein (g)      & 50           & 150$^*$      & 7.2   & 16.8  & 2.5 & 1.4      \\
Total Fat (g)    & 45$^*$       &  80          & 6.3   & 14.3  & 1.5 & 1.9      \\
Total Sugars (g) & 0$^*$        &  100         & 18.0  & 4.5   & 1.7 & 19.0     \\
Fiber (g)        & 25           &  40$^*$      & 0.2   & 1.5   & 0.9 & 4.1      \\
Sat.\ Fat (g)   & 10$^*$       & 22           & 3.3   & 4.6   & 0.4 & 0.31     \\
Cholesterol (mg) & 0$^*$        &  150         & 14.0  & 53.9  & 1.2 & 0.0      \\
Sodium (mg)      & 1000$^*$     &  2300        & 108.3 & 639.4 & 119.0 & 10.7     \\
    \hline \hline
    \end{tabular}
\end{center}
\footnotesize{$^*$ Indicates the bound designated as relevant in this setup.}
\end{table}

\begin{table}[h]
\small
    \caption{Descriptive statistics of observed daily nutrient intake for the focus subgroup (Women, 51+, Hypertension and Pre-diabetes, $N=230$, 460 observations).}
    \label{Table:Diet_observations}
\begin{center}\renewcommand{\arraystretch}{1.2}
\begin{tabular}{l|cc|ccccc}
\hline\hline
\textbf{Nutrient}   &  \textbf{Mean} & \textbf{Std Dev} & \textbf{Min} & \textbf{25\%} & \textbf{50\%} & \textbf{75\%} & \textbf{Max} \\ \hline \hline
Energy (kcal) &  1,889.8        & 787.1        & 400.5        & 1,329.3       & 1,780.9       & 2,323.6       & 4,969.5        \\
Carbohydrates (g)  &  242.5         & 105.3        & 48.1         & 167.7        & 231.6        & 299.2        & 647.9         \\
Protein (g)      &  69.6          & 34.1         & 10.6         & 46.4         & 64.1         & 85.3         & 233.9         \\
Total Fat (g)    &  74.6          & 36.8         & 6.4          & 48.7         & 69.3         & 93.2         & 239.4         \\
Sugars (g)       &  104.9         & 54.8         & 16.3         & 65.0         & 96.9         & 131.9        & 446.6         \\
Fiber (g) &  15.6          & 8.8          & 1.5          & 9.7          & 13.6         & 19.8         & 54.9          \\
Sat.\ Fat (g) &  23.0          & 11.5         & 1.6          & 14.6         & 21.2         & 29.9         & 72.5          \\
Cholesterol (mg) &  250.2         & 192.9        & 5.5          & 124.8        & 196.9        & 331.7        & 1,262.0        \\
Sodium (mg)      &  3,413.4        & 1,643.2       & 324.7        & 2,205.5       & 3,106.6       & 4,288.4       & 9,942.8        \\
\hline\hline
\end{tabular}
\end{center}
\end{table}

\begin{table}[h]
\footnotesize
\begin{center}\renewcommand{\arraystretch}{1.2}
\caption{Summary statistics of CGM data before and after dietary recommendations.}
\label{Table:GlucoseStats}
\begin{tabular}{>{\raggedright}p{0.35\textwidth}|>{\centering}p{0.15\textwidth}|>{\centering\arraybackslash}p{0.15\textwidth}}
\hline\hline
Statistic & Before & After \\
\hline
Mean (mg/dL) & 81.90 & 77.17 \\
Median (mg/dL) & 81.00 & 76.00 \\
Minimum (mg/dL) & 47.00 & 61.00 \\
Maximum (mg/dL) & 120.00 & 106.00 \\
Standard Deviation (mg/dL) & 11.30 & 7.29 \\
\% Readings $>$100 mg/dL & 6.63\% & 0.38\% \\
\% Readings 60--100 mg/dL & 91.30\% & 99.62\% \\
\hline\hline
\end{tabular}
\end{center}
\end{table}

\begin{table}[h]
\footnotesize
\begin{center}\renewcommand{\arraystretch}{1.2}
\caption{Daily nutritional intake over the two-week study period.}
\label{Table:DailyNutrition}
\begin{tabular}{>{\centering}p{0.08\textwidth}|>{\centering}p{0.09\textwidth}|>{\centering}p{0.08\textwidth}|>{\centering}p{0.07\textwidth}|>{\centering}p{0.08\textwidth}|>{\centering}p{0.08\textwidth}|>{\centering}p{0.08\textwidth}|>{\centering}p{0.07\textwidth}|>{\centering\arraybackslash}p{0.07\textwidth}}
\hline\hline
Date & Calories & Carbs (g) & Fat (g) & Protein (g) & Cholest.\ (mg) & Sodium (mg) & Sugar (g) & Fiber (g) \\
\hline
Day 1 & 2,646 & 206 & 94 & 175 & 320 & 2,916 & 63 & 38 \\
Day 2 & 2,420 & 217 & 118 & 135 & 705 & 2,366 & 66 & 28 \\
Day 3 & 2,416 & 223 & 85 & 136 & 278 & 2,732 & 77 & 31 \\
Day 4 & 2,515 & 254 & 98 & 183 & 513 & 2,709 & 82 & 22 \\
Day 5 & 2,535 & 252 & 71 & 196 & 114 & 813 & 75 & 22 \\
Day 6 & 2,324 & 274 & 87 & 82 & 511 & 2,492 & 84 & 13 \\
Day 7 & 2,256 & 325 & 74 & 163 & 184 & 2,318 & 47 & 35 \\
\hline
Day 8 & 2,102 & 227 & 83 & 119 & 150 & 1,560 & 48 & 22 \\
Day 9 & 2,200 & 222 & 104 & 111 & 528 & 3,181 & 31 & 13 \\
Day 10 & 2,289 & 141 & 132 & 143 & 595 & 3,497 & 30 & 28 \\
Day 11 & 1,898 & 149 & 77 & 131 & 347 & 2,223 & 40 & 35 \\
Day 12 & 1,930 & 160 & 87 & 126 & 491 & 2,805 & 28 & 17 \\
Day 13 & 2,085 & 146 & 109 & 140 & 656 & 2,184 & 54 & 20 \\
\hline\hline
\end{tabular}
\end{center}
\end{table}

\begin{table}[h]
\footnotesize
\caption{Average daily nutrient intake before and after recommendations.}
\label{table:study_results}
\centering
\begin{tabular}{lcc}
\hline\hline
Nutrient & Before & After \\
\hline
Calories & 2,445 & 2,084 \\
Carbohydrates (g) & 250 & 174 \\
Fat (g) & 90 & 98 \\
Protein (g) & 153 & 132 \\
Sugar (g) & 71 & 40 \\
Fiber (g) & 27 & 24 \\
\hline\hline
\end{tabular}
\end{table}

\clearpage
\ECSwitch
\ECHead{Electronic Companion}

\section{Proof of Statements}

\begin{proof}{Proof of Proposition~\ref{prop:cone-intersection}}
Each normal cone $N_\Omega(z^k) = \operatorname{cone}\{a_i : i \in I(z^k)\}$ is a polyhedral cone by definition. The intersection of finitely many polyhedral cones is itself a polyhedral cone, establishing that $\mathcal{C}$ is polyhedral.

For (i): If $i \in I_\cap$, then $i \in I(z^k)$ for all $k$, so $a_i \in N_\Omega(z^k)$ for all $k$, hence $a_i \in \mathcal{C}$. Since $\mathcal{C}$ is a cone, $\operatorname{cone}\{a_i : i \in I_\cap\} \subseteq \mathcal{C}$.
Part (ii) follows directly from (i), and (iii) holds because a polyhedral cone has dimension 1 if and only if it is a single ray (a half-line from the origin). \Halmos
\end{proof}

\begin{proof}{Proof of Proposition~\ref{prop:multiple-rays-conditions}}
For (i): By Proposition~\ref{prop:cone-intersection}(i), $\operatorname{cone}\{a_i : i \in I_\cap\} \subseteq \mathcal{C}$. If $|I_\cap| \geq 2$ with $a_{i_1}, a_{i_2} \in \{a_i : i \in I_\cap\}$ not collinear, then $a_{i_1}$ and $a_{i_2}$ generate distinct rays in $\mathcal{C}$: neither is a positive scalar multiple of the other.

For (ii): With $K=1$, $\mathcal{C} = N_\Omega(z^1) = \operatorname{cone}\{a_i : i \in I(z^1)\}$. If $|I(z^1)| \geq 2$ with non-collinear generators, the cone has dimension at least 2, hence contains multiple rays.

For (iii): If $I(z^1) \subseteq I(z^k)$ for all $k$, then $N_\Omega(z^k) \supseteq N_\Omega(z^1)$ for all $k$ (a larger active set generates a larger normal cone). Thus $\mathcal{C} = \bigcap_k N_\Omega(z^k) \supseteq N_\Omega(z^1)$, and the result follows from (ii).

For the converse: $\mathcal{C}$ being a single ray means $\dim(\mathcal{C}) = 1$, which requires the constraints across all observations to collectively leave exactly one feasible objective direction. \Halmos
\end{proof}

\begin{proof}{Proof of Theorem~\ref{thm:nonidentifiability-lp}}
By Proposition~\ref{prop:multiple-rays-conditions}, the stated conditions guarantee that $\mathcal{C}$ contains at least two distinct rays, generated by linearly independent vectors $v^{(1)}, v^{(2)} \in \mathcal{C}$ with $v^{(1)}, v^{(2)} \neq 0$ and $v^{(2)} \neq \alpha v^{(1)}$ for any $\alpha > 0$.

\textit{Case 1: Euclidean normalization $\Theta = \{\theta : \|\theta\|_2 = 1\}$.} Define $\theta^{(1)} = v^{(1)}/\|v^{(1)}\|_2$ and $\theta^{(2)} = v^{(2)}/\|v^{(2)}\|_2$. Since $v^{(1)}$ and $v^{(2)}$ generate distinct rays, $\theta^{(1)} \neq \theta^{(2)}$. Both lie in $\mathcal{C}$ (cones are closed under positive scaling) and satisfy the normalization.

\textit{Case 2: Simplex normalization $\Theta = \{\theta : \mathbf{1}^\top \theta = 1, \theta \geq 0\}$.}
Since $\mathcal{ICO}$ is assumed feasible, there exists at least one $\bar{\theta} \in \mathcal{C} \cap \Theta$; in particular $\bar{\theta} \geq 0$, $\bar{\theta} \neq 0$, so $\mathcal{C} \cap \mathbb{R}^n_+ \neq \{0\}$.

We claim $\mathcal{C} \cap \Theta$ contains at least two distinct points. To see this, note that $\mathcal{C} \cap \mathbb{R}^n_+$ is a polyhedral cone.  If $\dim(\mathcal{C} \cap \mathbb{R}^n_+) \geq 2$, then its intersection with the hyperplane $\{\theta : \mathbf{1}^\top\theta = 1\}$ is a polytope of dimension at least 1, hence it contains infinitely many points.  It remains to show $\dim(\mathcal{C} \cap \mathbb{R}^n_+) \geq 2$.

Suppose for contradiction that $\dim(\mathcal{C} \cap \mathbb{R}^n_+) \leq 1$.  Since $\mathcal{C}$ has dimension at least~2 (by Proposition~\ref{prop:multiple-rays-conditions}), any two-dimensional face of $\mathcal{C}$ would need to lie entirely outside $\mathbb{R}^n_+$ except along at most a single ray.  However, under Assumption~\ref{assump:normalization} (which restricts $\theta \in \mathbb{R}^p_+$), every feasible parameter of $\mathcal{ICO}$ must lie in $\mathcal{C} \cap \mathbb{R}^n_+$.  The existence of $\bar{\theta} \in \mathcal{C} \cap \mathbb{R}^n_+$ with $\bar{\theta} \neq 0$ combined with $\dim(\mathcal{C}) \geq 2$ and the fact that $\mathcal{C}$ is generated by nonnegative combinations of constraint normals $a_i$ (which, in polyhedral inverse optimization, are the rows of $A$) ensures that $\mathcal{C}$ possesses at least two linearly independent generators in $\mathbb{R}^n_+$, yielding $\dim(\mathcal{C} \cap \mathbb{R}^n_+) \geq 2$.  (More precisely, $\mathcal{C} \subseteq \operatorname{cone}\{a_i : i \in I_\cap\}$ by Proposition~\ref{prop:cone-intersection}(i), and under the conditions of Proposition~\ref{prop:multiple-rays-conditions}, at least two non-collinear $a_i$ generate rays in $\mathcal{C}$.  The nonnegativity of these generators depends on the problem data; if $a_{i_1}, a_{i_2} \in \mathbb{R}^n_+$, the conclusion follows immediately.)

\textit{Feasibility for $\mathcal{ICO}$:} Each $\theta^{(i)} \in \mathcal{C}$ satisfies $\theta^{(i)} \in N_\Omega(z^k)$ for all $k$. This means there exist $\lambda^k \geq 0$ supported on $I(z^k)$ such that $\theta^{(i)} = \sum_{j \in I(z^k)} \lambda_j^k a_j$. Combined with primal feasibility $z^k \in \Omega$ and complementary slackness (automatic since $\lambda^k$ is supported on the active set $I(z^k)$), $z^k$ is optimal for $\min\{(\theta^{(i)})^\top x : x \in \Omega\}$. Since both $\theta^{(1)}$ and $\theta^{(2)}$ rationalize all observations as optimal, both are feasible for $\mathcal{ICO}$. \Halmos
\end{proof}

\begin{proof}{Proof of Corollary~\ref{thm:nonidentifiability-rico-lp}}
In the linear case, the relaxed stationarity constraint in $\mathcal{R}$-$\mathcal{ICO}$ reads 
$\|\theta - A^\top \lambda^k\|_2 \le \epsilon$ for $k = 1, \ldots, K$,
where $\lambda^k \geq 0$.

Let $v^{(1)}, v^{(2)} \in \mathcal{C}$ be distinct ray generators (existence guaranteed by Proposition~\ref{prop:multiple-rays-conditions}). For each $v^{(i)}$ and each $k$, since $v^{(i)} \in N_\Omega(z^k) = \operatorname{cone}\{a_j : j \in I(z^k)\}$, there exist nonnegative coefficients $\{\mu_j^{k,(i)}\}_{j \in I(z^k)}$ such that $v^{(i)} = \sum_{j \in I(z^k)} \mu_j^{k,(i)} a_j$. Define $\lambda^{k,(i)} \in \mathbb{R}^m$ by $\lambda_j^{k,(i)} = \mu_j^{k,(i)}$ for $j \in I(z^k)$ and $\lambda_j^{k,(i)} = 0$ otherwise. Then $A^\top \lambda^{k,(i)} = v^{(i)}$.

For the normalized parameter $\theta^{(i)} = v^{(i)}/\|v^{(i)}\|_2 \in \Theta$, set $\tilde{\lambda}^{k,(i)} = \lambda^{k,(i)}/\|v^{(i)}\|_2$. Then $A^\top \tilde{\lambda}^{k,(i)} = \theta^{(i)}$, yielding zero stationarity residual: $\|\theta^{(i)} - A^\top \tilde{\lambda}^{k,(i)}\|_2 = 0 \le \epsilon$.

Since this construction applies to any ray generator in $\mathcal{C}$, each corresponding normalized $\theta^{(i)}$ is feasible for $\mathcal{R}$-$\mathcal{ICO}$ with zero stationarity residual. The existence of multiple distinct rays in $\mathcal{C}$ therefore implies multiple feasible $\theta$ in $\mathcal{R}$-$\mathcal{ICO}$ for any $\epsilon \geq 0$. Moreover, the $\epsilon$-relaxation only enlarges the feasible set beyond $\mathcal{C} \cap \Theta$, so any additional ambiguity compounds the underlying non-identifiability rather than resolving it. \Halmos
\end{proof}

\begin{proof}{Proof of Proposition~\ref{prop:feasibility-set-structure}}
For each $k$, the constraint $A(z^k)\theta \in -N_\Omega(z^k)$ defines the preimage of the polyhedral cone $-N_\Omega(z^k)$ under the linear map $\theta \mapsto A(z^k)\theta$. Under Assumption~\ref{assump:slater}, $N_\Omega(z^k) = \operatorname{cone}\{\nabla g_i(z^k) : i \in I(z^k)\}$ is polyhedral. The preimage of a polyhedral cone under a linear map is a polyhedral cone, and the intersection of finitely many polyhedral cones is polyhedral. \Halmos
\end{proof}

\begin{proof}{Proof of Proposition~\ref{prop:convex-multiple-rays}}
For (i): If $\theta_0 \in \bigcap_k \ker(A(z^k))$ with $\theta_0 \neq 0$, then $A(z^k)\theta_0 = 0 \in -N_\Omega(z^k)$ for all $k$ (since every cone contains the origin). Thus $\theta_0 \in \mathcal{S}$. For any $\bar{\theta} \in \mathcal{S}$, we have $\bar{\theta} + \alpha\theta_0 \in \mathcal{S}$ for all $\alpha \in \mathbb{R}$, since $A(z^k)(\bar{\theta} + \alpha\theta_0) = A(z^k)\bar{\theta} \in -N_\Omega(z^k)$. If $\bar{\theta}$ is not parallel to $\theta_0$, then $\bar{\theta}$ and $\theta_0$ generate two non-collinear rays in $\mathcal{S}$.

For (ii): Let $A := A(z^{k_0})$.  Since $A$ has full column rank, the map $\theta \mapsto A\theta$ is injective, establishing a linear bijection between $\mathbb{R}^p$ and $\operatorname{range}(A)$.  The constraint from observation $k_0$ requires $A\theta \in -N_\Omega(z^{k_0})$, i.e.,
\[
\theta \in A^{-1}\bigl(-N_\Omega(z^{k_0})\bigr) = \bigl\{\theta \in \mathbb{R}^p : A\theta \in -N_\Omega(z^{k_0})\bigr\}.
\]
Because $A$ is injective, the preimage cone $A^{-1}(-N_\Omega(z^{k_0}))$ is isomorphic (via $A$) to $\operatorname{range}(A) \cap (-N_\Omega(z^{k_0}))$.  In particular,
\[
\dim\bigl(A^{-1}(-N_\Omega(z^{k_0}))\bigr)
= \dim\bigl(\operatorname{range}(A) \cap N_\Omega(z^{k_0})\bigr) \geq 2
\]
by hypothesis.  A polyhedral cone of dimension $\geq 2$ contains at least two non-collinear rays, say $v^{(1)}, v^{(2)}$.  

Since $\mathcal{S} \subseteq A^{-1}(-N_\Omega(z^{k_0}))$, two non-collinear rays in the preimage are elements of $\mathcal{S}$ only if they also satisfy the constraints from all other observations.  For $K = 1$ the conclusion follows immediately.  For $K > 1$, the conclusion holds provided the constraints from observations $k \neq k_0$ do not reduce $\dim(\mathcal{S})$ below~2.  This is guaranteed, for instance, when $I(z^{k_0}) \subseteq I(z^k)$ for all $k$ (so that the $k_0$-constraint is the most restrictive), paralleling Proposition~\ref{prop:multiple-rays-conditions}

For (iii): If $S$ is singular, there exists $\Delta\theta \neq 0$ with $S\Delta\theta = 0$.  By definition of $S$, this implies $\|P_k A(z^k) \Delta\theta\|_2^2 = 0$ for each $k$, so $A(z^k)\Delta\theta \in \operatorname{span}\{n^k\}$ for all $k$ (under one-dimensional normal cones $N_\Omega(z^k) = \operatorname{cone}\{n^k\}$).  Writing $A(z^k)\Delta\theta = \beta_k n^k$, consider any $\bar{\theta} \in \mathcal{S}$ with $A(z^k)\bar{\theta} = -\alpha_k n^k$, $\alpha_k \geq 0$.  Then for sufficiently small $t > 0$,
\[
A(z^k)(\bar{\theta} + t\Delta\theta) = -(\alpha_k - t\beta_k)\, n^k,
\]
and $\alpha_k - t\beta_k \geq 0$ for all $k$ when $t \leq \min_k \alpha_k / \max(|\beta_k|, \delta)$ for any $\delta > 0$ (the minimum is taken over $k$ with $\beta_k > 0$; if $\beta_k \leq 0$ for all $k$, any $t > 0$ works).  Thus $\bar{\theta} + t\Delta\theta \in \mathcal{S}$.  If $\bar\theta$ is not parallel to $\Delta\theta$, then $\bar\theta$ and $\bar\theta + t\Delta\theta$ generate distinct rays, giving $\dim(\mathcal{S}) \geq 2$. \Halmos
\end{proof}

\begin{proof}{Proof of Theorem~\ref{thm:nonidentifiability-convex}}
By Proposition~\ref{prop:convex-multiple-rays}, the stated conditions guarantee that $\mathcal{S}$ contains at least two non-collinear vectors $v^{(1)}, v^{(2)} \in \mathcal{S}$ with $v^{(2)} \neq \alpha v^{(1)}$ for any $\alpha > 0$.

For normalization $\Theta = \{\theta : \|\theta\|_2 = 1\}$, define $\theta^{(i)} = v^{(i)}/\|v^{(i)}\|_2$. Since $v^{(1)}$ and $v^{(2)}$ are non-collinear, $\theta^{(1)} \neq \theta^{(2)}$. Both are elements of $\mathcal{S} \cap \Theta$.

By definition of $\mathcal{S}$, for each $k = 1, \ldots, K$:
$A(z^k)\theta^{(i)} \in -N_\Omega(z^k)$ for $i = 1, 2$.
This is precisely the KKT stationarity condition $\nabla_x f(z^k, \theta^{(i)}) \in -N_\Omega(z^k)$ for $z^k$ to be optimal for $\min\{f(x, \theta^{(i)}) : x \in \Omega\}$. Combined with primal feasibility $z^k \in \Omega$ and the existence of corresponding multipliers (guaranteed by the cone membership), both $\theta^{(1)}$ and $\theta^{(2)}$ rationalize all observations as optimal, hence both are feasible for $\mathcal{ICO}$.

For $\mathcal{R}$-$\mathcal{ICO}$, the relaxed condition $\operatorname{dist}(A(z^k)\theta, -N_\Omega(z^k)) \le \epsilon$ is satisfied with zero distance for any $\theta \in \mathcal{S}$. Thus $\mathcal{S} \cap \Theta \subseteq \text{Feasible}(\mathcal{R}\text{-}\mathcal{ICO})$, and non-identifiability persists. \Halmos
\end{proof}

\begin{proof}{Proof of Theorem~\ref{thm:identifiability}}
Let $\theta, \theta' \in \Theta$ both be feasible for $\mathcal{ICO}$ with imputed optima $\{z^k\}_{k=1}^K$. By Assumption~\ref{assump:strongcx}, the forward problem has a unique optimizer for each parameter, so the imputed optima are determined by $\theta$ (and $\theta'$). Since both parameters rationalize the same observations, the KKT conditions hold at each $z^k$:
\begin{align}
A(z^k)\theta + \alpha_k n^k &= 0, \quad \alpha_k \geq 0, \label{eq:kkt-theta}\\
A(z^k)\theta' + \alpha'_k n^k &= 0, \quad \alpha'_k \geq 0, \label{eq:kkt-theta-prime}
\end{align}
where we have used Assumption~\ref{assump:unique-normal} to write $-A(z^k)\theta \in N_\Omega(z^k) = \operatorname{cone}\{n^k\}$ as $A(z^k)\theta = -\alpha_k n^k$. Define $\Delta\theta := \theta - \theta'$ and $\Delta\alpha_k := \alpha_k - \alpha'_k$. Subtracting \eqref{eq:kkt-theta-prime} from \eqref{eq:kkt-theta}:
$A(z^k)\Delta\theta + \Delta\alpha_k \, n^k = 0$ for $k = 1, \ldots, K$. Apply the projection $P_k = I - n^k(n^k)^\top$ to both sides. Since $P_k n^k = 0$ (using $\|n^k\|_2 = 1$):
$P_k A(z^k)\Delta\theta = 0$ for $k = 1, \ldots, K$. Left-multiply by $A(z^k)^\top$ and sum over $k$:
$\sum_{k=1}^K A(z^k)^\top P_k A(z^k) \Delta\theta = S \, \Delta\theta = 0$. By Assumption~\ref{assump:orth-excite}, $S \succ 0$, so $\Delta\theta = 0$, i.e., $\theta = \theta'$. The normalization $\theta \in \Theta$ removes any residual scaling or sign ambiguity. \Halmos
\end{proof}

\begin{proof}{Proof of Proposition~\ref{prop:necessity-orth-excite}}
Suppose $S$ is singular, so there exists $\Delta\theta \neq 0$ with $S\Delta\theta = 0$. This implies $\Delta\theta^\top S \Delta\theta = \sum_{k=1}^K \|P_k A(z^k)\Delta\theta\|_2^2 = 0$, hence $P_k A(z^k)\Delta\theta = 0$ for all $k$. Since $P_k A(z^k)\Delta\theta = 0$, the vector $A(z^k)\Delta\theta$ lies in $\ker(P_k) = \operatorname{span}\{n^k\}$, so $A(z^k)\Delta\theta = \beta_k n^k$ for some $\beta_k \in \mathbb{R}$.

Let $\theta$ be any parameter satisfying the KKT conditions at $\{z^k\}$ with multipliers $\{\alpha_k\}$: $A(z^k)\theta = -\alpha_k n^k$ with $\alpha_k \geq 0$. Define $\theta' := \theta + t\Delta\theta$ for $t > 0$. Then:
$A(z^k)\theta' = -\alpha_k n^k + t\beta_k n^k = -(\alpha_k - t\beta_k) n^k$.

For $t$ sufficiently small, $\alpha'_k := \alpha_k - t\beta_k \geq 0$ for all $k$ (since $\alpha_k > 0$ generically, or by choosing $t \leq \min_k \alpha_k / \max(|\beta_k|, 1)$). Thus $\theta$ and $\theta + t\Delta\theta$ are distinct parameters satisfying the KKT conditions at all $\{z^k\}$, demonstrating non-identifiability prior to normalization. \Halmos
\end{proof}

\begin{proof}{Proof of Corollary~\ref{cor:sufficient-S-pd}}
For (i): Suppose $\Delta\theta \neq 0$. For each $k$ with $A(z^k)$ full rank, $A(z^k)\Delta\theta \neq 0$. Since $\{n^k\}$ span $\mathbb{R}^n$, no single nonzero vector $w \in \mathbb{R}^n$ can be parallel to all $n^k$ simultaneously (this would require $w \in \operatorname{span}\{n^k\}$ for each $k$, but $n \geq 2$ linearly independent vectors cannot all be proportional to a single $w$). Hence there exists $k$ such that $A(z^k)\Delta\theta$ is not parallel to $n^k$, giving $P_k A(z^k)\Delta\theta \neq 0$ and $\Delta\theta^\top S \Delta\theta > 0$. 

For (ii): With $P_{k_0} = I$ and $A(z^{k_0})$ full rank, $S \succeq A(z^{k_0})^\top A(z^{k_0}) \succ 0$. \Halmos
\end{proof}

\begin{proof}{Proof of Theorem~\ref{thm:IL-complexity}}
Direct counting: $\mathcal{IL}$ has decision variables $(z, \theta, \lambda) \in \mathbb{R}^n \times \mathbb{R}^p \times \mathbb{R}^m$, totaling $n + p + m$ variables. The constraints comprise $m$ primal feasibility conditions \eqref{eq:IL-primal}, $n$ stationarity equations \eqref{eq:IL-stat}, and $m$ complementarity conditions \eqref{eq:IL-comp}, totaling $n + 2m$ constraints. Both quantities are independent of $K$. \Halmos
\end{proof}

\begin{proof}{Proof of Proposition~\ref{prop:aggregation}}
Expanding the squared norm:
$\sum_{k=1}^K \|x^k - z\|_2^2 = \sum_{k=1}^K \|x^k\|_2^2 - 2K\bar{x}^\top z + K\|z\|_2^2 = K\|z - \bar{x}\|_2^2 + \sum_{k=1}^K \|x^k\|_2^2 - K\|\bar{x}\|_2^2$.
The last two terms are constant with respect to $(z, \theta, \lambda)$. \Halmos
\end{proof}

\begin{proof}{Proof of Theorem~\ref{thm:IL-param-set}}
The stationarity condition \eqref{eq:IL-stat} requires $A(z^*)\theta = -\sum_{i=1}^m \lambda_i \nabla g_i(z^*)$. By complementarity \eqref{eq:IL-comp}, $\lambda_i > 0$ only if $i \in I(z^*)$. Thus $A(z^*)\theta \in -N_\Omega(z^*)$. Conversely, any $\theta$ satisfying this inclusion admits nonnegative multipliers $\lambda$ such that $(z^*, \theta, \lambda)$ satisfies the KKT conditions. Under Assumption~\ref{assump:slater}, KKT conditions are necessary and sufficient, completing the characterization. \Halmos
\end{proof}

\begin{proof}{Proof of Proposition~\ref{prop:IL-geometry}}
By Proposition~\ref{prop:aggregation}, $\mathcal{IL}$ minimizes $\|z - \bar{x}\|_2^2$ subject to $z$ being optimal for some $\theta \in \Theta$. Under Assumption~\ref{assump:IL-regularity}(iv), the feasible set for $z$ in $\mathcal{U}$ is the convex face $F$. The metric projection onto a closed convex set is well-defined and unique. \Halmos
\end{proof}

\begin{proof}{Proof of Theorem~\ref{thm:IL-consistency}}
(i) By the strong law of large numbers, $\bar{x} = \frac{1}{K}\sum_{k=1}^K x^k \to \mathbb{E}[x^k] = z_0$ almost surely.

(ii) For $K$ sufficiently large, $\bar{x} \in \mathcal{U}$ almost surely. By Assumption~\ref{assump:IL-regularity}(iv), $\mathcal{Z}^* \cap \mathcal{U} = F_0 \cap \mathcal{U}$ is closed and convex. The $\mathcal{IL}$ solution satisfies $z^*_{\mathcal{IL}} = \operatorname{proj}_{F_0 \cap \mathcal{U}}(\bar{x})$ for large $K$. The metric projection onto a closed convex set is nonexpansive, so $z^*_{\mathcal{IL}} \to \operatorname{proj}_{F_0}(z_0) = z_0$, where the last equality uses $z_0 \in F_0$.

(iii) The set-valued mapping $z \mapsto N_\Omega(z)$ is outer semicontinuous for closed convex $\Omega$ \citep{rockafellar2009variational}. Since $\Theta^*(z) = \{\theta \in \Theta : A(z)\theta \in -N_\Omega(z)\}$ and $(z, \theta) \mapsto A(z)\theta$ is continuous, the mapping $z \mapsto \Theta^*(z)$ is outer semicontinuous by composition. \Halmos
\end{proof}

\begin{proof}{Proof of Proposition~\ref{prop:IL-robust}}
Both $z^*_{\mathcal{IL}}$ and $z^*_{\text{out}}$ are projections onto the same convex face $F_0$. The projection onto a convex set is 1-Lipschitz. \Halmos
\end{proof}

\begin{proof}{Proof of Theorem~\ref{thm:IL-solution-identifiability}}
The objective $\|z - \bar{x}\|_2^2$ is strictly convex in $z$. By Assumption~\ref{assump:IL-regularity}(iv), the feasible set $\mathcal{Z}^* \cap \mathcal{U}$ coincides locally with the convex set $F_0$. A strictly convex function minimized over a convex set has a unique minimizer. Consistency follows from Theorem~\ref{thm:IL-consistency}. \Halmos
\end{proof}

\begin{proof}{Proof of Proposition~\ref{prop:IL-different-conditions}}
(i)--(ii): $\mathcal{IL}$ uniqueness (Theorem~\ref{thm:IL-solution-identifiability}) requires only strict convexity of the objective and local convexity of the feasible set $\mathcal{Z}^* \cap \mathcal{U}$. Neither involves normal cone dimension or excitation matrices.

(iii): For polyhedral $\Omega$, faces are convex. If $z_0 \in \operatorname{relint}(F_0)$ and there exists $\theta \in \Theta$ such that $F_0 \subseteq \arg\min_{x \in \Omega} \theta^\top x$, then locally $\mathcal{Z}^*$ coincides with $F_0$, satisfying Assumption~\ref{assump:IL-regularity}(iv). This condition can hold even when the normal cone at $z_0$ is high-dimensional and excitation fails. Conversely, classical $\mathcal{IO}$ conditions (Assumptions~\ref{assump:unique-normal}--\ref{assump:orth-excite}) can hold even when $\mathcal{Z}^*$ is not locally convex near $z_0$. \Halmos
\end{proof}

\begin{proof}{Proof of Corollary~\ref{cor:IL-param-identifiability}}
By Theorem~\ref{thm:IL-consistency}, $z^*_{\mathcal{IL}} \to z_0$. By Theorem~\ref{thm:identifiability}, $\Theta^*(z_0) = \{\theta_0\}$ under the stated assumptions. By outer semicontinuity of $z \mapsto \Theta^*(z)$, we have $\Theta^*(z^*_{\mathcal{IL}}) \to \Theta^*(z_0) = \{\theta_0\}$. \Halmos
\end{proof}

\begin{proof}{Proof of Proposition~\ref{prop:theta-star-structure}}
(i) The preimage of a convex cone under a linear map is convex; intersection with the convex set $\Theta$ preserves convexity.

(ii) For polyhedral $\Omega$, $N_\Omega(z^*) = \operatorname{cone}\{a_i : i \in I(z^*)\}$ is polyhedral. The condition $A(z^*)\theta \in -N_\Omega(z^*)$ becomes: there exist $\mu_i \geq 0$ such that $A(z^*)\theta = -\sum_{i \in I(z^*)} \mu_i a_i$, defining a polyhedron in $(\theta, \mu)$-space; projecting onto $\theta$ yields a polyhedron.

(iii) With $N_\Omega(z^*) = \operatorname{cone}\{n^*\}$, applying $P_{n^*}$ to the constraint $A(z^*)\theta = -\alpha n^*$ gives $P_{n^*} A(z^*)\theta = 0$. The solution set has dimension $p - \operatorname{rank}(P_{n^*} A(z^*))$; intersecting with $\|\theta\|_2 = 1$ reduces dimension by 1. \Halmos
\end{proof}

\begin{proof}{Proof of Theorem~\ref{thm:GIL-feasibility}}
(a) By Assumption~\ref{assump:realizability}, there exists $z \in \Omega$ with exactly $r$ active relevant constraints, strict slack on inactive ones, LICQ, and a valid $(\theta, \lambda)$ pair. Setting $v_i = 1$ for active relevant constraints and $v_i = 0$ otherwise satisfies all constraints of $\mathcal{GIL}$, including the slack requirement \eqref{eq:GIL-slack}.

(b) The objective is continuous, the feasible set is a finite union of closed sets (indexed by binary vectors satisfying \eqref{eq:GIL-card}), and $\Omega$ is compact. By the Weierstrass theorem, an optimum exists.

(c) By Remark~\ref{rem:GIL-KKT}, any feasible solution satisfies the KKT conditions. Under Assumption~\ref{assump:slater}, KKT is sufficient for optimality. \Halmos
\end{proof}

\begin{proof}{Proof of Theorem~\ref{thm:GIL-param-set}}
By the slack constraint \eqref{eq:GIL-slack}, $v^*_i = 0$ implies $g_i(z^*) \leq -\varepsilon < 0$ for $i \in \mathcal{R}$. Thus $g_i(z^*) = 0$ implies $v^*_i = 1$ for relevant constraints. Conversely, $v^*_i = 1$ forces $g_i(z^*) = 0$ by \eqref{eq:GIL-active}. The stationarity and complementarity analysis then follows identically to Theorem~\ref{thm:IL-param-set}. \Halmos
\end{proof}

\begin{proof}{Proof of Theorem~\ref{thm:MGIL-monotone}}
At iteration $\ell + 1$, constraint \eqref{eq:MGIL-inherit} restricts the feasible region to solutions where all constraints active at $z_\ell$ remain active. Constraint \eqref{eq:MGIL-increment} further requires at least one additional relevant constraint to bind. Thus the feasible set at iteration $\ell + 1$ is contained in (and generically a proper subset of) the feasible set at iteration $\ell$. Since both problems minimize the same objective, optimization over a smaller feasible set yields a weakly larger optimal value. \Halmos
\end{proof}

\begin{proof}{Proof of Theorem~\ref{thm:MGIL-face}}
By \eqref{eq:MGIL-inherit}, $\mathcal{A}_\ell \subseteq \mathcal{A}_{\ell+1}$: all constraints active at iteration $\ell$ remain active at $\ell + 1$, and \eqref{eq:MGIL-increment} ensures at least one additional constraint activates. The defining equalities of $\mathcal{F}_{\ell+1}$ include all those of $\mathcal{F}_\ell$ plus additional ones, yielding containment. \Halmos
\end{proof}

\begin{proof}{Proof of Theorem~\ref{thm:MGIL-persistent}}
In the linear case with $\phi_j(x) = x_j$, the gradient matrix $A(z) = I_n$ is the identity (independent of $z$), since $\nabla \phi_j(z) = e_j$ for all $z$. At each iteration, $\mathcal{MGIL}$ enforces stationarity $\theta = A^\top \lambda = \sum_i \lambda_i a_i$. By Theorem~\ref{thm:MGIL-face}, $\mathcal{A}_\ell \subseteq \mathcal{A}_{\ell'}$ for $\ell \leq \ell'$. Any $\theta \in \operatorname{cone}\{a_i : i \in \mathcal{A}_\ell\}$ can be written as a nonnegative combination of normals from $\mathcal{A}_\ell \subseteq \mathcal{A}_{\ell'}$. Since all constraints in $\mathcal{A}_{\ell'}$ are active at $z_{\ell'}$, stationarity holds with appropriate multipliers, making $z_{\ell'}$ optimal for $\theta$. \Halmos
\end{proof}

\begin{proof}{Proof of Theorem~\ref{thm:MGIL-consistency}}
By induction. At $\ell = 0$, $z^K_0 \to z^0_0 = z_0$ by $\mathcal{IL}$ consistency (Theorem~\ref{thm:IL-consistency}). At iteration $\ell + 1$, the feasible set is determined by $\mathcal{A}(z^K_\ell)$, which converges to $\mathcal{A}(z_0^\ell)$ under non-degeneracy as $z^K_\ell \to z^0_\ell$. The optimization is continuous in the defining constraints, so $z^K_{\ell+1} \to z^0_{\ell+1}$. \Halmos
\end{proof}

\begin{proof}{Proof of Theorem~\ref{thm:GIL-dominance}}
Under the feasibility assumption, $z^{\text{FO}}$ lies in the feasible set of $\mathcal{GIL}$ (it has exactly $r$ active relevant constraints with all others strictly slack, and satisfies KKT by construction). $\mathcal{GIL}$ minimizes aggregate distance over this feasible set, so $z^*_{\text{GIL}}$ achieves weakly smaller objective value. \Halmos
\end{proof}

\section{Decision-Support Tools} \label{sec:webpage}

To translate the $\mathcal{IL}$ framework into operational tools, we developed two interactive web-based applications.

\emph{Personalized Nutrition Recommender} (\href{https://optimal-lab.com/nutrition-recommender/}{optimal-lab.com/nutrition-recommender}). This tool allows users (patients, dietitians) to input a daily food intake, select a dietary regimen (DASH, low-carb, etc.), and specify preferred constraints $\mathcal{P}$. The tool applies the $\mathcal{MGIL}$ model iteratively, presenting a sequence of recommended modifications (Figures~\ref{Fig:website_IL_recommendations}--\ref{Fig:website_IL_nutrients}) that visualize the observation-constraint tradeoff path. Each step shows the concrete food group changes needed to satisfy an additional nutritional target, facilitating shared decision-making between patient and clinician.

\emph{Optimal Diet Dashboard} (\href{https://optimal-lab.com/optimal-diet}{optimal-lab.com/optimal-diet}). This research tool enables exploration of retrospective results from the NHANES dataset. Users select demographic subgroups, apply $\mathcal{GIL}$ or $\mathcal{MGIL}$ with varying parameters ($r$, $\mathcal{P}$), and compare recommended versus observed diets. Figure~\ref{Fig:website_IL} illustrates the interface, which supports population-level analysis of adherence patterns and intervention strategy evaluation.

\begin{figure}[tb]
\begin{center}
\includegraphics[width =0.95 \linewidth]{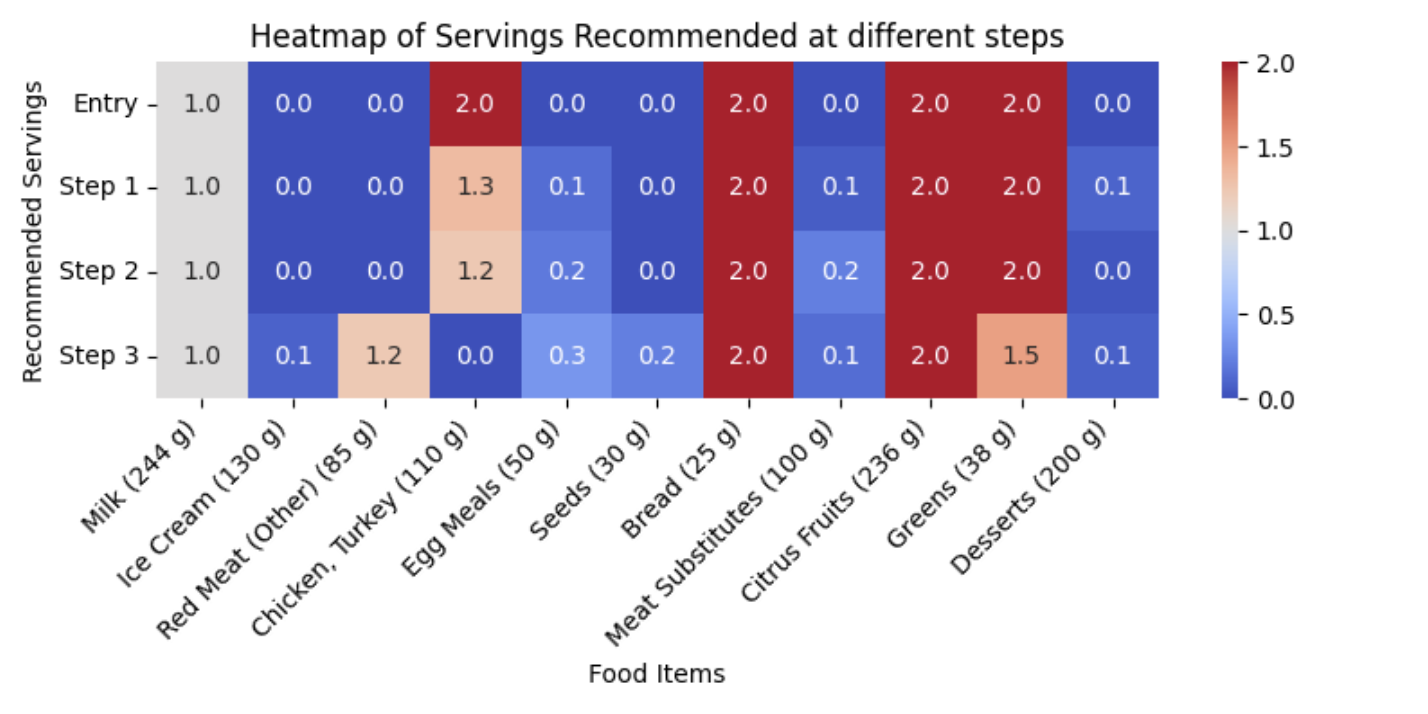}
\caption{Interactive diet recommendation tool: recommended food group modifications at each $\mathcal{MGIL}$ iteration. Row~1 shows the user's input; subsequent rows represent sequential ``nudges'' that bind additional nutritional constraints.} \label{Fig:website_IL_recommendations}
\end{center}
\end{figure}

\begin{figure}[tb]
\begin{center}
\includegraphics[width =0.95 \linewidth]{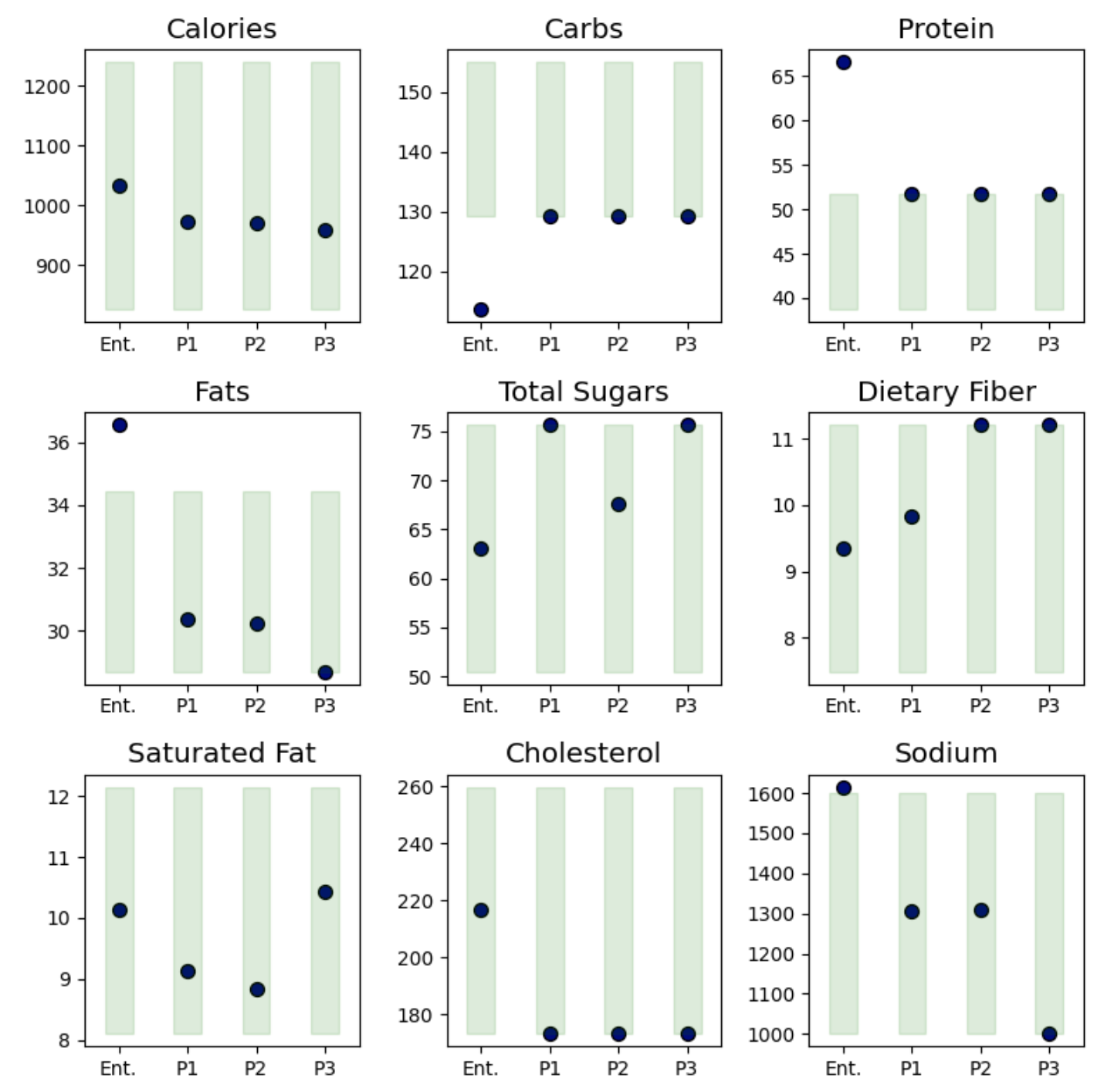}
\caption{Nutrient profiles at each $\mathcal{MGIL}$ step. Step~1 maximizes protein (per user preference), Step~2 maximizes dietary fiber, and Step~3 reduces sodium to the DASH target, Illustrating the structured tradeoff navigation enabled by preferred constraints $\mathcal{P}$.} \label{Fig:website_IL_nutrients}
\end{center}
\end{figure}

\begin{figure}[tb]
\begin{center}
\fbox{
\includegraphics[width =0.95 \linewidth]{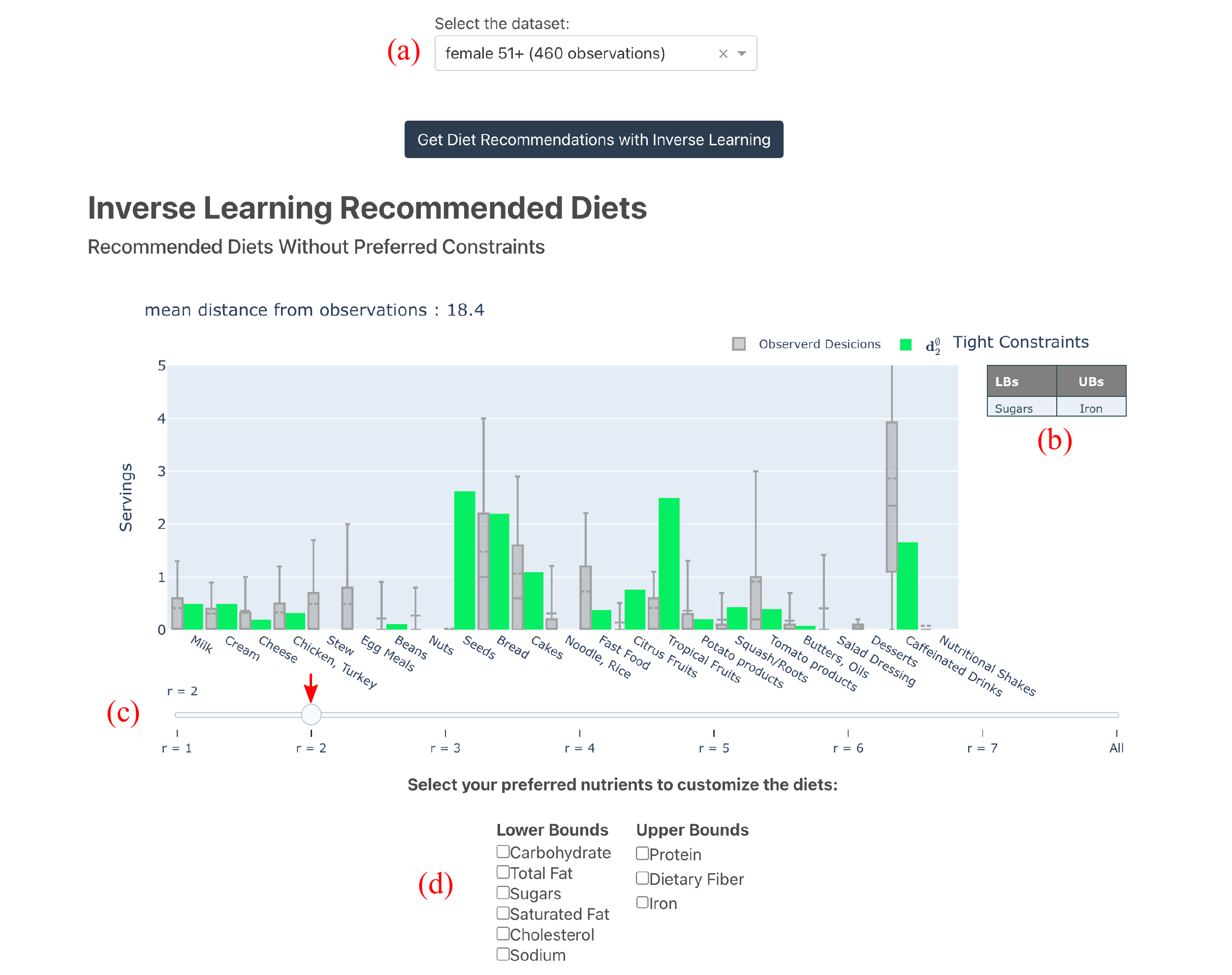}}
\caption{Optimal Diet Dashboard interface. (a)~Dataset and subgroup selection, (b)~display of binding relevant constraints, (c)~slider to explore different constraint binding levels $r$, and (d)~panel for specifying preferred constraints $\mathcal{P}$. The dashboard enables researchers and practitioners to analyze population adherence patterns and evaluate dietary intervention strategies within the $\mathcal{GIL}$ framework.} \label{Fig:website_IL}
\end{center}
\end{figure}

These tools operationalize the theoretical framework by providing flexible, interpretable, and personalized recommendations that bridge the gap between evidence-based guidelines and individual dietary behavior.

\section{Diet Recommendation Problem Additional Data and Results} \label{Appendix:diet_data}
This section includes additional information on the input data for the diet recommendation problem and figures indicating $\MGIL$ solutions for all food types and nutrients. The intake data include more than 5,000 different food types. Given the large number of food types, we bundled them into 38 broad food groups for ease of interpretation and to make the learned diets more tractable. This categorization is done based on the food codes from USDA. Table \ref{Table:food_groups} shows the grouping developed for the dataset and the average serving size of each food item in grams. Table \ref{Table:dash_diet_recommendations_servings} Illustrates the recommendations of the DASH diet in terms of the number of servings of each food group for different diets with distinct calorie targets. Since the DASH diet recommendations are in servings, Table \ref{Table:dash_diet_recommendations_servings} provides additional details about a typical sample of each food group along with the corresponding amount in one serving size.  We utilize the food samples from Table \ref{Table:dash_diet_recommendations_servings}, the nutritional data from USDA, and the recommended amounts from the DASH eating plan to calculate the required bounds on nutrients. These bounds can serve as the right-hand side vector for constraints in linear optimization settings.
Figures \ref{Fig:ILvsMILallfoods} and \ref{Fig:ILvsMILnuts_all} showcase results of applying $\MGIL$ to all the data from the same population groups, showing the results of implementing inverse learning models for all food types and all nutrients. We also note that in this figure of nutrients, the lower bound of sugar is binding for the $r=1$ of $\MGIL$ and in increasing values of $r$, it becomes non-binding. This is due to the fact that not only the binding constraints chosen by the binary variables are forced to remain binding and any other constraint that becomes binding in addition to the ones chosen by the binary variables might become non-binding in subsequent runs with higher $r$ values. With the increasing applications and importance of optimization and learning models, the ease of access to reliable, accurate, and interpretable datasets has become paramount. We hope that the presence of such a dataset can help the researchers in different data-driven approaches to evaluate proposed methods and get meaningful insights.

\iftrue
\begin{figure}[t]
\begin{center}
\includegraphics[width =0.9 \linewidth]{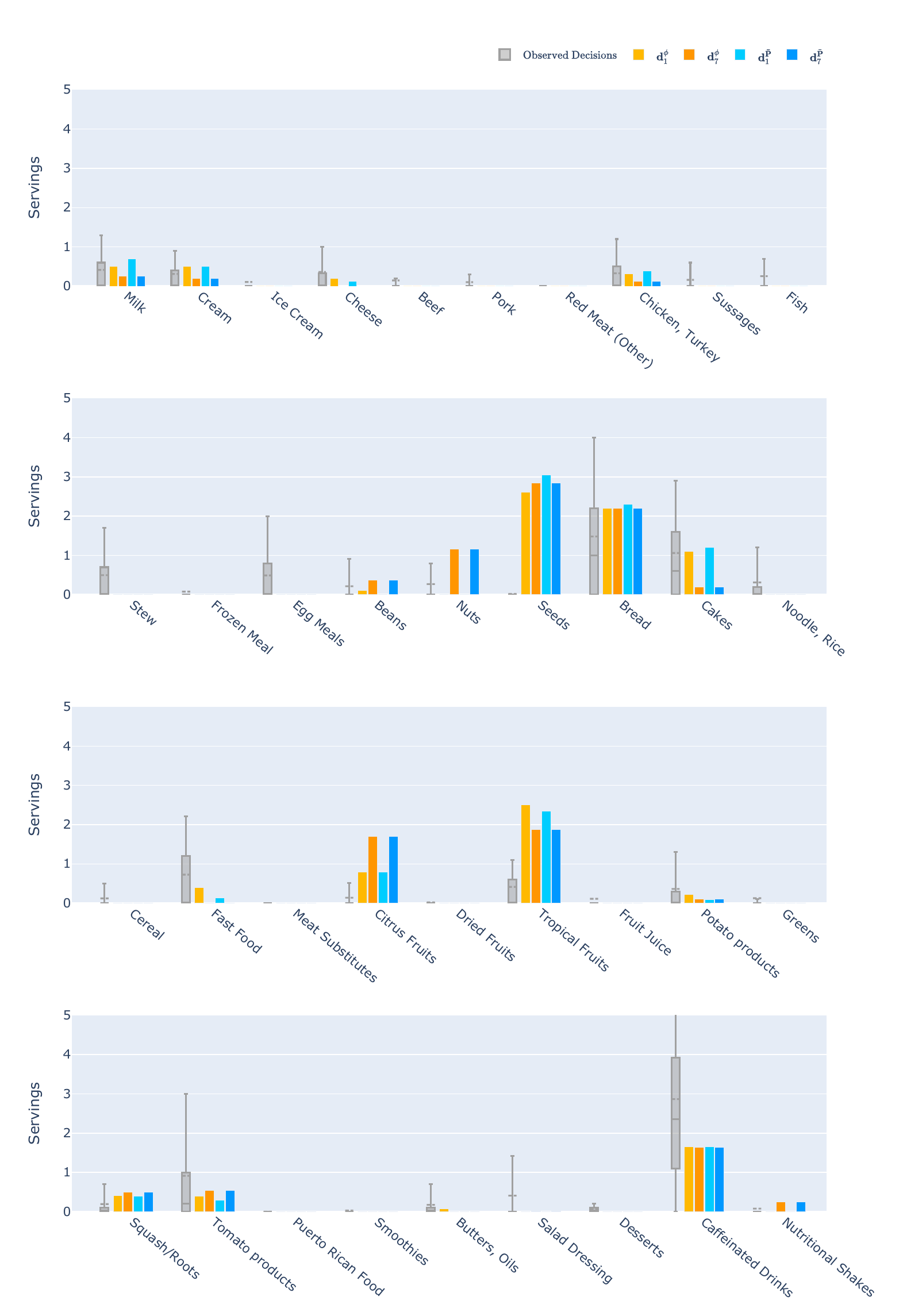}
\caption{\footnotesize Comparison of recommended diets by $\MGIL$ with different values for $r$ and a set of 460 observations for all food types. } \label{Fig:ILvsMILallfoods}
\end{center}
\end{figure}
\fi

\iftrue
\begin{figure}[t]
\begin{center}
\includegraphics[width =1 \linewidth]{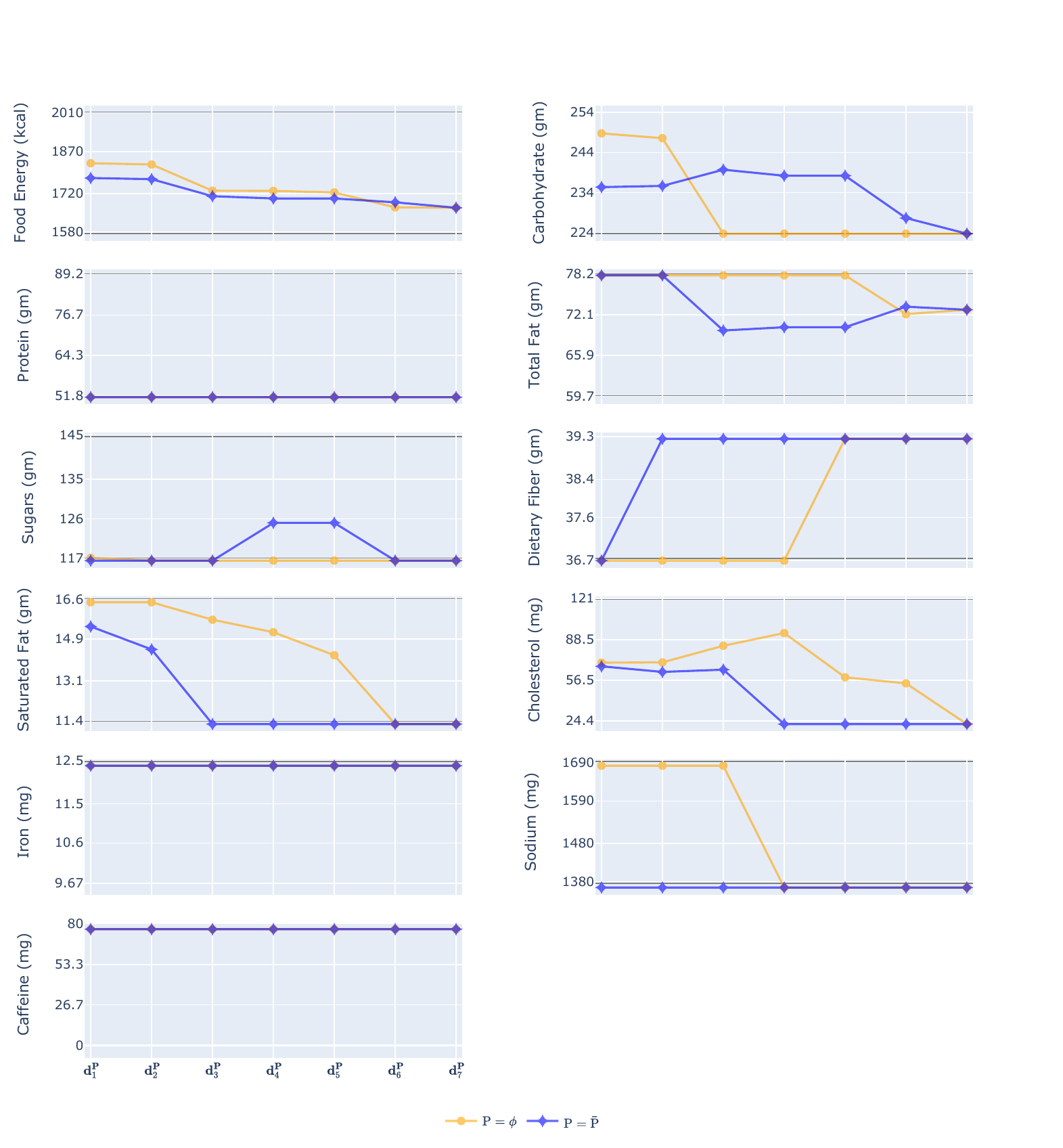}
\caption{\footnotesize Comparison of nutrients of recommended diets by $\GIL$ and $\MGIL$ for different values of $r$ for all nutrients in the model. (Triv.: \objective\ constraint, Rel.: \desirable\ constraint, Pref.: \pref\ constraint)} \label{Fig:ILvsMILnuts_all}
\end{center}
\end{figure}
\fi

\begin{table}[] 
\small
        \caption{Food groups and their respective serving sizes in grams}
        \label{Table:food_groups}
\begin{tabular}{>{}p{0.24\textwidth}|>{}p{0.55\textwidth}|>{\centering\arraybackslash}p{0.15\textwidth}}
Group Name              & Description                                                                                            & Serving Size (g) \\ \hline \hline
Milk                    & milk, soy milk, almond milk, chocolate milk, yogurt, baby food, infant   formula                         & 244              \\
Cream                   & Cream, sour cream                                                                                      & 32               \\
Ice Cream               & all types of ice cream                                                                                 & 130              \\
Cheese                  & all types of cheese                                                                                    & 32               \\
Beef                    & ground beef, steaks (cooked, boiled, grilled or raw)                                                   & 65               \\
Pork                    & chops of pork, cured pork, bacon (cooked, boiled, grilled or raw)                                      & 84               \\
Red Meat (Other)        & lamb, goat, veal, venison  (cooked, boiled, grilled or raw)                                            & 85               \\
Chicken, Turkey          & all types of chicken, turkey, duck (cooked, boiled, grilled or raw)                                   & 110              \\
Sausages                & beef or red meat by-products, bologna, sausages, salami, ham    (cooked, boiled, grilled or raw)      & 100              \\
Fish                    & all types of fish,                                                                                     & 85               \\
Stew                    & stew meals containing meat (or substitutes), rice, vegetables                                          & 140              \\
Frozen Meals            & frozen meal (containing meat and vegetables)                                                           & 312              \\
Egg Meals               & egg meals, egg omelets and substitutes                                                                 & 50               \\
Beans                   & all types of beans (cooked, boiled, baked, raw)                                                                                     & 130           \\
Nuts                    & all types of nuts                                                                                      & 28.35            \\
Seeds                   & all types of seeds                                                                                     & 30               \\
Bread                   & all types of bread                                                                                     & 25               \\
Cakes, Biscuits, Pancakes & cakes, cookies, pies, pancakes, waffles                                                                & 56               \\
Noodle, Rice             & macaroni, noodle, pasta, rice                                                                          & 176              \\
Cereal                  & all types of cereals                                                                                   & 55               \\
Fast Foods              & burrito, taco, enchilada, pizza, lasagna                                                               & 198              \\
Meat Substitutes        & meat substitute that are  cereal-   or vegetable protein-based                                         & 100              \\
Citrus Fruits           & grapefruits, lemons, oranges                                                                           & 236              \\
Dried Fruits            & all types of dried fruit                                                                               & 28.3            \\
Tropical Fruits         & apples, apricots, avocados, bananas, cantaloupes, cherries, figs, grapes,   mangoes, pears, pineapples & 182              \\
Fruit Juice             & All types of fruit juice                                                                               & 249              \\
Potato products                & potatoes (fried, cooked)                                                                                           & 117              \\
Greens                  & beet greens, collards, cress, romaine, greens, spinach                                                 & 38               \\
Squash/Roots            & carrots, pumpkins, squash, sweet potatoes                                                               & 72               \\
Tomato products                 & tomato, salsa containing tomatoes, tomato byproducts                                                   & 123              \\
Vegetables              & raw vegetables                                                                                         & 120              \\
Puerto Rican Food      & Puerto Rican style food                                                                                & 250              \\
Smoothies               & fruit and vegetable smoothies                                                                          & 233              \\
Butter, Oils             & butters, oils                                                                                          & 14.2             \\
Salad Dressing          & all types of salad dressing                                                                            & 14               \\
Desserts                & sugars, desserts, toppings                                                                             & 200              \\
Caffeinated Drinks      & coffees, soda drinks, iced teas                                                                        & 240              \\
Nutritional Shakes                  & Nutritional shakes, Energy Drinks,   Protein Powders                                                                                  & 166   \\    
\hline
\end{tabular}
\end{table}

\begin{table}[]
\small
        \caption{Food categories and their recommended number of servings for different targets based on the DASH diet \citep{dash_diet_2020}}
        \label{Table:dash_diet_recommendations_servings}
\begin{tabular}{>{}p{0.3\textwidth}|>{}p{0.1\textwidth}|>{}p{0.08\textwidth}|>{}p{0.08\textwidth}|>{}p{0.08\textwidth}|>{}p{0.08\textwidth}|>{}p{0.075\textwidth}|>{\arraybackslash}p{0.075\textwidth}} 
&\multicolumn{7}{c}{Diet Target}\\\cline{2-8}
Food Category                          & 1,200    \ \  Calories               & 1,400 Calories              & 1,600 Calories               & 1,800 Calories               & 2,000 Calories               & 2,600 Calories         & 3,100 Calories\\
\hline \hline
Grains                             & 4–5                & 5–6                & 6                  & 6                  & 6–8                & 10–11        & 12–13        \\
Vegetables                          & 3–4                & 3–4                & 3–4                & 4–5                & 4–5                & 5–6          & 6            \\
Fruits                              & 3–4                & 4                  & 4                  & 4–5                & 4–5                & 5–6          & 6            \\
Fat-free or low-fat dairy products & 2–3                & 2–3                & 2–3                & 2–3                & 2–3                & 3            & 3–4          \\
Lean meats, poultry, and fish       & $\leq$ 3          & $\leq$3–4        & $\leq$3–4         & $\leq$ 6         & $\leq$6          & $\leq$6     & 6–9          \\
Nuts, seeds, and legumes            & 3/week         & 3/week         & 3–4/week       & 4/week         & 4–5/week       & 1            & 1            \\
Fats and oils                      & 1                  & 1                  & 2                  & 2–3                & 2–3                & 3            & 4            \\
Sweets and added sugars             & $\leq$ 3/week & $\leq$3/week & $\leq$3/week & $\leq$5/week & $\leq$5/week & $\leq$2           & $\leq$2           \\
Maximum sodium limit(mg/day)               & 2,300        & 2,300        & 2,300        & 2,300       & 2,300        & 2,300  & 2,300 
\end{tabular}
\end{table}

\begin{table}[]
\small
        \caption{Food categories and their respective serving sizes in grams}
        \label{Table:dash_diet_servings}
\begin{tabular}{ll}
Food Category                            & Serving Size (Example)                                                        \\
\hline \hline
Grains                              & 1 slice of whole-grain bread                                                 \\
Vegetables                            & 1 cup (about 30 grams) of raw, leafy green vegetables like   spinach or kale \\
Fruits                                & 1 medium apple                                                               \\
Fat-free   or low-fat dairy products & 1 cup (240 ml) of low-fat milk                                               \\
Lean   meats, poultry, and fish       & 1 ounce (28 grams) of cooked meat, chicken or fish                           \\
Nuts,   seeds, and legumes            & 1/3 cup (50 grams) of nuts                                                   \\
Fats   and oils                     & 1 teaspoon (5 ml) of vegetable oil                                           \\
Sweets   and added sugars             & 1 cup (240 ml) of lemonade                                                  
\end{tabular}
\end{table}

\end{document}